 \newtheorem{theorem}{Theorem} 
  \newtheorem{lemma} [theorem] {Lemma}
\newtheorem{corollary} [theorem] {Corollary}
\newtheorem{definition} [theorem] {Definition} 
\newtheorem{proposition} [theorem] {Proposition} 
\newtheorem{open problem} [theorem] {Open Problem} 
\newtheorem{example} [theorem] {Example}
\newtheorem{claim} [theorem] {Claim}
 \DeclareMathOperator{\Left}{left}
\DeclareMathOperator{\Right}{right}
\begin{document}

\begin{frontmatter}

 \title{Undecidability in First-Order  Theories of Term Algebras Extended with a  Substitution Operator    }
 \author{Juvenal Murwanashyaka}
 \address{Department of Mathematics, University of Oslo,  Norway}
 \journal{a journal}

\begin{abstract}

 We introduce a first-order theory of finite full binary trees and then identify decidable and undecidable fragments of this theory.
We  show that  the analogue of Hilbert`s 10th Problem is undecidable  by constructing  a many-to-one reduction of 
Post`s Correspondence Problem. 
By a different method, 
we show that deciding truth of sentences  with  one existential quantifier and one bounded universal quantifier is undecidable. 

\end{abstract}

\end{frontmatter}

\section{Introduction}

Consider a finite  one-sorted   first-order language $ \mathcal{L} $ with equality, at least one constant symbol and no relation symbols. 
The canonical $ \mathcal{L} $-structure $ \mathcal{T}  ( \mathcal{L} ) $,  called  the term algebra of $\mathcal{L}$,   
has as a  universe  the set of all variable-free $ \mathcal{L} $-terms  and  is such that  each variable-free  term  is interpreted as itself. 
As a consequence  of decidability of  the monadic second-order theory of two successor functions, 
 $ \mathsf{S2S} $    (see Rabin  \cite{rabin1969}), 
if $ \mathcal{L} $ has only unary function symbols, then the monadic second-order theory of $ \mathcal{T} ( \mathcal{L} ) $ is decidable.
Indeed, if the constant symbols of $ \mathcal{L} $ are $ c_1, \ldots , c_k $ and the (unary) function symbols of  
$ \mathcal{L} $ are $f_1, \ldots , f_m $, 
then we can interpret  $ \mathcal{T} ( \mathcal{L} ) $   in  $ \mathsf{S2S} $ by letting 
 $ c_i = 01^{i } 0 $ and  $ f_j x = x 0 1^{ k+j } 0 $. 
 It is not difficult to see that the  closure of $ \lbrace 010, \ldots , 0 1^{ k } 0 \rbrace $ under the $ f_j$`s is  definable 
in   $ \mathsf{S2S} $ extend with  the prefix relation. 
But,  the prefix relation is known to be definable in $ \mathsf{S2S} $
(see for example  B\"orger  et al.  \cite{borger2001}  p.~317).
For general term algebras, quantifier elimination can be used to show  that the first-order theory of   
$  \mathcal{T}  ( \mathcal{L} ) $ is decidable
(see  for example  Hodges  \cite{Hodges1993} Section 2.7).

Since  first-order theories  of term algebras are decidable, they are restricted in  expressibility. 
In Kristiansen \& Murwanashyaka \cite{cie20}, we consider the first-order language  $\mathcal{L}_{ \mathsf{T} } =   \lbrace \perp , \langle \cdot , \cdot \rangle, \sqsubseteq \rbrace  $  where $ \perp$ is a constant symbol,  $ \langle \cdot , \cdot \rangle $ is a binary function symbol and $ \sqsubseteq $ is a binary relation symbol. 
The intended  $\mathcal{L}_{ \mathsf{T} }  $-structure $ \mathcal{T}  ( \mathcal{L}_{ \mathsf{T} } ) $  extends the term algebra given by
$  \lbrace \perp , \langle \cdot , \cdot \rangle \rbrace  $ by interpreting $ \sqsubseteq  $ as the subterm relation. 
We introduce two  theories    $ \mathsf{WT} $, $ \mathsf{T} $ with  simple purely universal axiomatizations
(see Figure \ref{AxiomsOfT} for the axioms of  $ \mathsf{WT} $ and  $ \mathsf{T} $). 
We show that  $ \mathsf{WT} $ is mutually interpretable with the  Tarski-Robinson-Mostowski theory of arithmetic $ \mathsf{R} $
and that $ \mathsf{T} $ interprets   Robinson Arithmetic  $ \mathsf{Q} $.
It is not difficult to see that  $ \mathsf{T} $ is an extension of  $ \mathsf{WT} $. 
Since $ \mathsf{WT} $  interprets $ \mathsf{R} $,   G\"odel`s first incompleteness theorem holds for  $ \mathsf{WT} $. 
That is,  any consistent recursively axiomatizable extension of  $ \mathsf{WT} $  is incomplete. 
In particular,  the first-order theory of  $ \mathcal{T}   ( \mathcal{L}_{ \mathsf{T} } ) $ is undecidable. 
There cannot exist an algorithm that takes as input a first-order  $\mathcal{L}_{ \mathsf{T} }  $-sentence $ \phi $ 
and decides whether $ \phi $ is true in  $ \mathcal{T}  ( \mathcal{L}_{ \mathsf{T} } ) $.

\begin{figure}
\[
\begin{array}{r l  c  c r l  }
&{\large \textsf{The Axioms of } \mathsf{WT} }
&
\\
\\
\mathsf{WT_{1} } 
&  s \neq t      \     \      \mbox{ if } s,t \mbox{ are distinct variable-free  }   \mathcal{L}_{ \mathsf{T} }  \mbox{-terms} 
\\
\mathsf{WT_{2} } 
& \forall x  \;    [   \   x \sqsubseteq  t     \leftrightarrow    \bigvee_{   s \in \mathsf{Sub} (t)  }   x = s         \     ] 
\end{array}
\] 
\[
\begin{array}{r l  c  c r l  }
&{\large \textsf{The Axioms of } \mathsf{T} }
&
\\
\\
\mathsf{ T_{1} } 
& \forall x y \;  [     \  \langle x, y \rangle \neq \perp    \         ]   
\\
\mathsf{ T_{2} } 
& \forall x y z w \;    [  \  \langle x, y \rangle  =   \langle z, w \rangle \rightarrow 
(  \  x=z \wedge y=w  \   )    \     ] 
\\
\mathsf{ T_{3} } 
&  \forall x  \; [  \ x \sqsubseteq \perp \leftrightarrow x= \perp  \  ]   
\\
\mathsf{ T_{4} } 
&   \forall x y z  \; [      \       x \sqsubseteq   \langle y , z  \rangle \leftrightarrow 
(  \  x =  \langle y , z  \rangle  \vee x \sqsubseteq y \vee x \sqsubseteq z  \  )       \       ] 
\end{array}
\] 

\caption{
Non-logical axioms of the first-order theories  $ \mathsf{WT} $,  $ \mathsf{T} $.
The axioms of $ \mathsf{WT} $ are given by axiom schemes where $s, t $ are variable-free   $\mathcal{L}_{ \mathsf{T} }  $-terms. 
Given a variable-free term $t$,   $ \mathsf{Sub} (t)  $ denotes the set of all subterms of $t$.
}
\label{AxiomsOfT}
\end{figure}

Since the first-order theory of  $ \mathcal{T}   ( \mathcal{L}_{ \mathsf{T} } ) $    is undecidable, 
a natural question is whether  it is possible to give a good characterization of the boundary between what we can and cannot effectively decide over  $ \mathcal{T}     ( \mathcal{L}_{ \mathsf{T} } ) $. 
In \cite{VenkataramanK1987}, 
Venkataraman   shows  that deciding truth of existential sentences is NP-complete
 while deciding truth of $\Sigma$-sentences    is undecidable.  
 Sentences are formulas without free variables and 
 $\Sigma$-sentences are sentences on negation normal form  where  universal quantifiers occur  bounded, i.e., they  are of the form 
$ \forall x \sqsubseteq t $. 
Existential sentences  are $ \Sigma$-sentences with no occurrence of universal quantifiers. 
 By inspecting the proof of Theorem 4.1 of \cite{VenkataramanK1987}, 
 we find that  Venkataraman actually proves  that deciding truth of  $\Sigma$-sentences with 
 3 existential quantifiers and 7 bounded universal quantifiers is undecidable. 
In Section \ref{UndecidableFragmentsPartISubtreeRelation}, 
we show  that deciding truth of   $ \mathcal{L}_{ \mathsf{T} }  $-sentences  of the form 
$ \exists x   \;  \forall y  \sqsubseteq x   \;   \forall z  \sqsubseteq x   \;  \phi $, where $ \phi $ is quantifier-free,  is undecidable. 
In  Section \ref{UndecidableFragmentsPartII}, 
we show that deciding truth of $ \Sigma$-sentences  in the meager language 
$   \mathcal{L}_{ \mathsf{T}^- }   = \lbrace \sqsubseteq \rbrace $
is undecidable.

We obtain a more expressive structure by replacing the subterm relation with  a  substitution operator  
$ \cdot  [  \cdot \mapsto \cdot ]  $ on 
variable-free  $ \mathcal{L}_{ \mathsf{T} }  $-terms: 
$ t [ r \mapsto s ] $ is the term we obtain by replacing each occurrence of $r $ in $t$ with $s $. 
Let  $    \mathcal{L}_{ \mathsf{BT} }  $ and   $ \mathcal{T} ( \mathcal{L}_{ \mathsf{BT} } ) $ denote the corresponding language and structure.
It is not difficult to see that the subterm relation is definable in  $ \mathcal{T} ( \mathcal{L}_{ \mathsf{BT} } ) $ 
by  a quantifier-free $   \mathcal{L}_{ \mathsf{BT} } $-formula. 
Indeed,  $ \mathcal{T} ( \mathcal{L}_{ \mathsf{BT} } ) $ is more expressive than  $ \mathcal{T} ( \mathcal{L}_{ \mathsf{T} } ) $
if we take quantifier complexity into account. 
In  Section \ref{UndecidableFragmentsPartISubstitutionOperator}, 
we  show  that    it  is undecidable whether a   sentence of the form 
$ \exists x   \;  \forall y  \sqsubseteq x   \;  \phi $, where $ \phi $ is quantifier-free,  is true in  
$ \mathcal{T} ( \mathcal{L}_{ \mathsf{BT} } ) $. 
Given this result, 
to  characterize the boundary between what we can and cannot effectively decide over  $ \mathcal{T} ( \mathcal{L}_{ \mathsf{BT} } ) $, 
we need to investigate the expressive power of  the existential fragment of   $ \mathcal{T} ( \mathcal{L}_{ \mathsf{BT} } ) $. 
In Section \ref{UndecidableFragmentsPartIII}  and Section \ref{UndecidableFragmentsPartIV}, 
we show that  the existential theory of  $ \mathcal{T} ( \mathcal{L}_{ \mathsf{BT} } ) $, 
denoted $  \mathsf{Th}^{ \exists }  (  \mathcal{T}  ( \mathcal{L}_{ \mathsf{BT}   }  )  )       $, 
  is undecidable. 
In Section \ref{AnalogueOfHilbers10Problem}, 
we show that this implies that the analogue of Hilbert`s 10th Problem is unsolvable.   
That is, we show that there cannot exist an algorithm that takes as input an existential 
$  \mathcal{L}_{ \mathsf{BT} }  $-sentence $ \psi $ of the form $ \exists \vec{x} \;  [   \  s =  t   \   ]  $
  and decides whether $ \psi $ is true  in   $ \mathcal{T} ( \mathcal{L}_{ \mathsf{BT} } ) $.

We give two  proofs of undecidability  of   $  \mathsf{Th}^{ \exists }  (  \mathcal{T}  ( \mathcal{L}_{ \mathsf{BT}   }  )  )       $. 
In Section   \ref{UndecidableFragmentsPartIII}, 
we prove undecidability of  $  \mathsf{Th}^{ \exists }  (  \mathcal{T}  ( \mathcal{L}_{ \mathsf{BT}   }  )  )       $ 
by constructing an existential interpretation of    $ (  \mathbb{N} , 0, 1, + , \times ) $ 
in  $ \mathcal{T} ( \mathcal{L}_{ \mathsf{BT} } ) $. 
An existential interpretation is a relative interpretation that maps existential formulas to existential formulas. 
In Section   \ref{UndecidableFragmentsPartIV}, 
we give a direct proof of  undecidability of   $  \mathsf{Th}^{ \exists }  (  \mathcal{T}  ( \mathcal{L}_{ \mathsf{BT}   }  )  )       $ 
by constructing a many-to-one reduction of Post`s Correspondence Problem.

\section{Preliminaries}
\label{Preliminaries}

We consider the first-order  languages  
\[
 \mathcal{L}_{ \mathsf{T}^- } = \lbrace  \sqsubseteq  \rbrace 
,   \    \   
 \mathcal{L}_{ \mathsf{T} } = \lbrace \perp ,  \langle \cdot , \cdot \rangle , \sqsubseteq  \rbrace 
,   \    \   
   \mathcal{L}_{ \mathsf{BT} } = \lbrace \perp ,  \langle \cdot , \cdot \rangle ,  \cdot [  \cdot \mapsto \cdot ]    \rbrace   
\]
where $\perp$ is a constant symbol, $\langle \cdot , \cdot \rangle $ is a binary function symbol,  $\sqsubseteq  $ is a binary relation symbol and $ \cdot [  \cdot \mapsto \cdot ]   $ is a ternary function symbol. 
The standard structures  for these languages  are term models: 
The universe  $ \mathbf{H} $   is the set of all variable-free $  \mathcal{L}_{ \mathsf{T} } $-terms. 
The function symbol  $ \langle \cdot , \cdot \rangle $ is interpreted as the function that maps the pair $(s, t ) $ to the term  
$ \langle s, t \rangle $. 
The relation symbol  $\sqsubseteq$ is interpreted as the subterm relation: $s $ is a subterm of $t$ iff $ s = t $ or 
 $ t = \langle t_1 , t_2 \rangle$ and $s$ is a subterm of $t_1 $ or $t_2$. 
The function symbol $ \cdot [  \cdot \mapsto \cdot ]   $ is interpreted as a  term substitution operator: 
$ t [ r \mapsto s ] $ is the term we obtain by replacing each occurrence of $r $ in $t$ with $s $. 
We define  $ t [ r \mapsto s ] $ by recursion as follows 
\[
 t [ r \mapsto s ]   
 =  \begin{cases}
 s                                                  &  \mbox{ if }  t = r 
 \\
  \perp                                                  &  \mbox{ if }  t \neq  r     \mbox{ and  }    t = \perp 
  \\
\big\langle    t_1   [ r \mapsto s ]      \,  ,  \,    t_2  [ r \mapsto s ]       \big\rangle          
                                     &  \mbox{ if }  t \neq  r    \mbox{ and  }       t = \langle t_1,  t_2  \rangle    \             .
 \end{cases}
\]
We will occasionally refer to   variable-free  $  \mathcal{L}_{ \mathsf{T} } $-terms as finite (full) binary trees  
and the relation $ \sqsubseteq $ as the subtree relation.
We let 
\[
\mathcal{T} (  \mathcal{L}_{ \mathsf{T}^- }   )  =    \big(  \mathbf{H} ,   \sqsubseteq    \big) 
,   \       
\mathcal{T} (  \mathcal{L}_{ \mathsf{T} }   )   =  \big(  \mathbf{H} , \perp , \langle \cdot , \cdot \rangle  , \sqsubseteq    \big)  
,   \     
 \mathcal{T} (  \mathcal{L}_{ \mathsf{BT} }   )  =   
 \big(  \mathbf{H} , \perp , \langle \cdot , \cdot \rangle  ,  \cdot [  \cdot \mapsto \cdot ]      \big) 
 \            .
\]

We introduce the bounded quantifiers  $ \exists x \sqsubseteq  t \; \phi$,  $\forall x \sqsubseteq  t \; \phi$ as shorthand notation for 
$ \exists x \;   [   \     x \sqsubseteq t    \, \wedge \,    \phi \    ] $ and  
$ \forall x \;   [   \     x \sqsubseteq t    \, \rightarrow \,    \phi \    ] $, respectively. 
In the case of  $ \mathcal{T} (  \mathcal{L}_{ \mathsf{BT} }   ) $, 
we let  $  x \sqsubseteq t  $ and  $  x  \not\sqsubseteq t  $  be shorthand for 
$ t [   \,   x     \,   \mapsto       \,    \langle  x , x   \rangle   \,   ] \neq t $ 
and $ t [  \,   x     \,   \mapsto       \,    \langle  x , x   \rangle   \,  ] =  t $, respectively.
We define  $\Sigma$-formulas inductively: 
 $\phi$ and $\neg \phi$ are $\Sigma$-formulas if $\phi$ is an atomic formula, 
$(\phi \vee \psi)$,  $(\phi \wedge \psi)$,  
$\exists x  \sqsubseteq t \;   \phi$,   $\forall x  \sqsubseteq t \;  \phi$,  $ \exists x \;   \phi$ 
 are  $\Sigma$-formulas if $\phi$ and $\psi$ are $\Sigma$-formulas 
and  $x$ is a variable that does not occur in  the term $t$.
An  existential formula is  a $\Sigma$-formula that  does not contain  bounded  quantifiers.
A sentence is a formula without free variables.

The main   focus  of this paper is to determine decidable and undecidable fragments of 
$ \mathcal{T} (  \mathcal{L}_{ \mathsf{T}^- }   ) $, 
$ \mathcal{T} (  \mathcal{L}_{ \mathsf{T} }   ) $    and 
$ \mathcal{T} (  \mathcal{L}_{ \mathsf{BT} }   ) $.
In Venkataraman   \cite{VenkataramanK1987}, 
it is shown  that the set of  existential $  \mathcal{L}_{ \mathsf{T} } $-sentences true in  
$ \mathcal{T} (  \mathcal{L}_{ \mathsf{T} }   ) $ 
is computable  and that the set of true $ \Sigma$-sentences is not computable. 
As a step towards  determining  the boundary between what we can and cannot effectively decide, 
we classify $ \Sigma$-formulas  according to the number and the type of quantifiers they contain: 
 A $ \Sigma_{n, m, k } $-formula  is a  $\Sigma$-formula that contains $n$ unbounded existential quantifiers, $m$ bounded existential quantifiers and $k$ bounded universal quantifiers. 
The fragment $\Sigma_{n, m, k }^{ \mathfrak{A} }  $ is the set of  all $\Sigma_{n, m, k } $-sentences that are  true in   $ \mathfrak{A}$.
The existential theory (existential fragment) of $ \mathfrak{A} $ is the set of all existential sentences  that are true in $ \mathfrak{A} $. 
We let $ \mathsf{Th}^{ \exists } ( \mathfrak{A}  ) $ denote the  existential theory  of $ \mathfrak{A} $.
The $ \Sigma$-theory of $ \mathfrak{A} $, denoted  $  \mathsf{Th}^{ \Sigma  } ( \mathfrak{A}  )  $, 
is the set of all $ \Sigma$-sentences in the language of  $ \mathfrak{A}  $ that are   true in  $ \mathfrak{A}  $.
That is,   
$  \mathsf{Th}^{ \Sigma  } ( \mathfrak{A}  )    =   \bigcup_{ n , m, k \geq 0 }   \Sigma_{n, m, k }^{  \mathfrak{A}   }    $. 
We let   $ \mathsf{Th}^{ \mathsf{H10} } ( \mathfrak{A}  ) $ denote the set of all  sentences of the form 
$ \exists \vec{ x } \;  [   \  s =  t  \   ]  $ that are  true in   $ \mathfrak{A}$.

We will be interested in comparing   first-order  structures   using a notion that is finer than many-to-one reducibility. 
Ever since  Yuri Matiyasevich  proved undecidability of $ \mathsf{Th}^{ \exists }  ( \mathbb{N}  , 0, 1, + , \times ) $ 
(see for example Davis \cite{Davis1973}),    
a standard technique for showing that a structure has undecidable existential theory  has been to show that   addition and multiplication are existentially definable on an  existentially definable domain. 
This is the notion we intend to use and  choose to refer to it as
\emph{$\exists $-interpretability}
 (\emph{existential interpretability}). 
Indeed, it is a special case of the more general notion of relative interpretability introduced by Alfred Tarski  \cite{tarski1953}. 
The structures we consider are mutually  interpretable with respect to this more general notion.
We  restrict ourselves to one-dimensional parameter-free relative interpretations and  treat equality as a logical operator.
For a more general notion of existential interpretability, see sections 5.3 and 5.4a of  Hodges \cite{Hodges1993}.

Let $ \mathcal{L}_0 $ and $ \mathcal{L}_1 $ be   finite (one-sorted)  first-order languages.
A $ \mathcal{L}_0 $-structure $ \mathfrak{A}  $  is \emph{$\exists $-interpretable} in  a $ \mathcal{L}_1 $-structure $ \mathfrak{B}  $  if 
\begin{itemize}

\item[(1)] we can find  an  existential  $ \mathcal{L}_1 $-formula $ \delta (x) $ that defines a non-empty subset 
$ A^{ \prime } $ of  the universe $B$  of $ \mathfrak{B}  $

\item[(2)] for each constant symbol $c$ of  $  \mathcal{L}_0 $,
 we can find an existential   $ \mathcal{L}_1 $-formula $ \phi_{c} (x)  $ that 
defines a unique element $ c^{ \prime }  \in A^{ \prime } $

\item[(3)] for each $n$-ary function  symbol $f$ of  $  \mathcal{L}_0 $,
 we can find an existential    $ \mathcal{L}_1 $-formula 
$ \phi_{f} (x_1, \ldots , x_n , y )  $ that 
defines a function from $(A^{ \prime })^n  $ to   $  A^{ \prime } $

\item[(4)] for each $n$-ary relation   symbol $R$ of  $  \mathcal{L}_0 $,
 we can find  existential     $ \mathcal{L}_1 $-formulas 
$ \phi_{R} (x_1, \ldots , x_n  )  $,  $ \phi_{R^c} (x_1, \ldots , x_n  )  $ that 
define  disjoint  sets  $ R^{ \prime }  \subseteq ( A^{ \prime }  )^n $,  $ \;    (R^c )^{ \prime }  \subseteq ( A^{ \prime }  )^n $ 
such that $ R^{ \prime }   \cup   (R^c )^{ \prime }  =    ( A^{ \prime }  )^n   $

\item[(5)]  (1)-(4) define a $ \mathcal{L}_0 $-structure $ \mathfrak{A}^{ \prime }  $ that is isomorphic to   $ \mathfrak{A}  $. 

\end{itemize}

The    formulas that occur in  (1)-(4) are parameter-free. That is, we display all free variables. 
If  $ \mathfrak{A}  $  is  $ \exists $-interpretable in $ \mathfrak{B}  $  and 
 $ \mathfrak{B}  $  is $ \exists $-interpretable in $ \mathfrak{A}  $, 
 we say that  $ \mathfrak{A}  $  and $ \mathfrak{B}  $  are  \emph{mutually $\exists $-interpretable}.

The following proposition summarizes important properties of this notion. 
They are straightforward and the proof is therefore omitted.

\begin{proposition}
Let  $ \mathfrak{A}   $, $ \mathfrak{B}  $ and $ \mathfrak{C}  $  be   first-order structures in finite languages. 
\begin{itemize}

\item If  $ \mathfrak{A}  $  is  $ \exists$-interpretable in $ \mathfrak{B}  $  and 
 $ \mathfrak{B}  $  is $ \exists$-interpretable in $ \mathfrak{C}  $, 
 then  $ \mathfrak{A}  $  is $ \exists$-interpretable in $ \mathfrak{C}  $.

 \item If  $ \mathfrak{A}  $  is $ \exists$-interpretable in $ \mathfrak{B}  $  and 
 $ \mathsf{Th}^{ \exists } ( \mathfrak{B}  ) $ is decidable,
  then    $ \mathsf{Th}^{ \exists } ( \mathfrak{A}  ) $  is decidable.

\end{itemize}

\end{proposition}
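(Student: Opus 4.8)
I would prove the two items in turn, the second being a quick consequence of the bookkeeping needed for the first. Suppose $\mathfrak{A}$ is an $\mathcal{L}_0$-structure $\exists$-interpretable in the $\mathcal{L}_1$-structure $\mathfrak{B}$ with defining data $\delta(x)$, $\phi_c(x)$, $\phi_f(\bar x, y)$, $\phi_R(\bar x)$, $\phi_{R^c}(\bar x)$, and that $\mathfrak{B}$ is $\exists$-interpretable in the $\mathcal{L}_2$-structure $\mathfrak{C}$ with analogous data $\delta'$, etc. The one thing that needs genuine attention is the \emph{translation lemma}: the $\mathfrak{B}$-in-$\mathfrak{C}$ interpretation assigns to each $\mathcal{L}_1$-formula $\psi(\bar x)$ an $\mathcal{L}_2$-formula $\psi^{*}(\bar x)$ such that, writing $B' = \delta'(\mathfrak{C})$ and $g : B' \to B$ for the isomorphism $\mathfrak{B}' \cong \mathfrak{B}$ furnished by clause (5), one has $\mathfrak{B} \models \psi(g(\bar b))$ iff $\mathfrak{C} \models \psi^{*}(\bar b)$ for all tuples $\bar b$ from $B'$; and crucially, if $\psi$ is \emph{existential} then $\psi^{*}$ can be taken existential. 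This last point is the heart of the matter and is proved by the usual induction once atoms are put in flat form: an $\mathcal{L}_1$-term is unnested by repeatedly replacing $f(t_1, \ldots, t_n)$ by a fresh existentially quantified variable constrained via $\phi_f$, so an atom $R(t_1, \ldots, t_n)$ or an equality $t_1 = t_2$ becomes an existential block of $\phi_f$'s conjoined with $\phi_R$ or with an equality between variables, while a negated atom $\neg R(\bar t)$ or a disequality $t_1 \neq t_2$ becomes the same block conjoined with $\phi_{R^c}$ or with a disequality between variables; unbounded existential quantifiers are relativized to $\delta'$, which is existential, and $\vee$, $\wedge$ are handled directly. No universal quantifier is ever introduced, which is exactly why $\exists$-interpretations compose and why clause (4) insists on a separate complementary formula $\phi_{R^c}$ rather than $\neg\phi_R$.

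\textbf{Composite interpretation.} Given the translation lemma, the interpretation of $\mathfrak{A}$ in $\mathfrak{C}$ is obtained by pushing the $\mathfrak{A}$-in-$\mathfrak{B}$ data through $*$ and relativizing free variables to $\delta'$. Concretely I would take the domain formula $\delta'(x) \wedge (\delta(x))^{*}$, the constant formulas $\delta'(x) \wedge (\phi_c(x))^{*}$, the function formulas $\bigwedge_i \delta'(x_i) \wedge \delta'(y) \wedge (\phi_f(\bar x, y))^{*}$, and the relation formulas $\bigwedge_i \delta'(x_i) \wedge (\phi_R(\bar x))^{*}$ and $\bigwedge_i \delta'(x_i) \wedge (\phi_{R^c}(\bar x))^{*}$; by the lemma these are all existential $\mathcal{L}_2$-formulas. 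They define the set $g^{-1}[A']$, where $A' = \delta(\mathfrak{B})$, and since $g$ is an isomorphism $\mathfrak{B}' \cong \mathfrak{B}$ the $\mathcal{L}_0$-structure these formulas carve out of $\mathfrak{C}$ is isomorphic, via $g$, to the structure $\mathfrak{A}'$ that the original data carve out of $\mathfrak{B}$; as $\mathfrak{A}' \cong \mathfrak{A}$ by clause (5), clauses (1)--(5) hold for $\mathfrak{A}$ in $\mathfrak{C}$. Non-emptiness of the domain, uniqueness of the interpreted constants, totality and single-valuedness of the interpreted functions, and disjointness and exhaustiveness of the $R'$, $(R^c)'$ all transfer along the isomorphism $g$, so nothing further is required.

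\textbf{Decidability, and the main obstacle.} For the second item, let $\sigma$ be an existential $\mathcal{L}_0$-sentence. Applying the translation above to the $\mathfrak{A}$-in-$\mathfrak{B}$ interpretation (now with $\mathcal{L}_0, \mathcal{L}_1$ in the roles of $\mathcal{L}_1, \mathcal{L}_2$) yields an existential $\mathcal{L}_1$-sentence $\sigma^{*}$ with $\mathfrak{A} \models \sigma$ iff $\mathfrak{B} \models \sigma^{*}$; and $\sigma \mapsto \sigma^{*}$ is computable because the finitely much interpretation data is fixed in advance. Hence a decision procedure for $\mathsf{Th}^{\exists}(\mathfrak{B})$ gives one for $\mathsf{Th}^{\exists}(\mathfrak{A})$: on input $\sigma$, compute $\sigma^{*}$ and query the procedure for $\mathfrak{B}$. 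The only nonroutine step in the whole argument is the translation lemma, and within it the verification that the image stays inside the existential fragment, which hinges on doing term-unnesting carefully (one existential quantifier per nested function symbol) and on pushing negations to atoms so that $\phi_{R^c}$ is used; everything else is transfer of first-order structure along the isomorphisms supplied by clause (5).
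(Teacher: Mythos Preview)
Your proof is correct. The paper itself omits the proof entirely, remarking only that the properties ``are straightforward and the proof is therefore omitted,'' so there is nothing to compare against; your write-up supplies exactly the standard argument the paper is gesturing at, including the key observation that clause~(4)'s separate complementary formula $\phi_{R^c}$ is what keeps the translation inside the existential fragment when a negated atomic formula is encountered.
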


We have the following three natural problems that we have not been able to settle.

\begin{open problem}    \label{MainOpenProblemExistentialInterpretabilityDegree}

\begin{enumerate}

\item[\textup{(1) } ] Let $\mathfrak{A} $ be a  computable  first-order structure. 
Assume   $ \mathsf{Th}^{ \exists } ( \mathfrak{A}  ) $  is undecidable. 
Is $ (  \mathbb{N} , 0, 1, + , \times  ) $   $ \exists $-interpretable  in  $\mathfrak{A} $?

\item[\textup{(2) }  ] 
 Let $ \mathcal{U} $ denote the class of all  computable  first-order  structures   with  undecidable existential theory. 
Does $ \mathcal{U} $ have a minimal element with respect to  $ \exists $-interpretability?

\item[\textup{(3) } ] 
Let $ \mathcal{D} $ denote the  class of computable  first-order structures  with decidable existential theory. 
Does $ \mathcal{D} $ have a maximal element  with respect to   $ \exists $-interpretability?

\end{enumerate}

\end{open problem}

\section{Undecidable Fragments I}   \label{UndecidableFragmentsPartI}

In this section, we show that the fragment $ \Sigma_{1, 0, 2 }^{  \mathcal{T} (  \mathcal{L}_{ \mathsf{T} }   ) }   $ is undecidable. 
 That is, we show that  there cannot exist an algorithm that takes as input a  $ \mathcal{L}_{ \mathsf{T} } $-sentence $ \phi $ of the form 
 $ \exists x \;   \forall y \sqsubseteq x \,   \forall z \sqsubseteq x \;  \phi_0  $,  where $ \phi_0 $ is  quantifier-free, 
  and decides whether    $ \phi $ is true in  $ \mathcal{T} (  \mathcal{L}_{ \mathsf{T} }   )  $. 
 We give two different proofs. 
 The first proof is very short but the method cannot be used to analyze  fragments of lower complexity since  it necessitates the use of  two bounded universal quantifiers. 
 The second proof is longer but the method allows us  to also show that the fragment  
 $ \Sigma_{1, 0, 1 }^{  \mathcal{T} (  \mathcal{L}_{ \mathsf{BT} }   ) }   $    is  undecidable.
 There is also the possibility that the method can be improved to show that the  fragment 
 $ \Sigma_{1, 0, 1 }^{  \mathcal{T} (  \mathcal{L}_{ \mathsf{T} }   ) }   $ is undecidable.

\begin{open problem}

Is the  fragment $ \Sigma_{1, 0, 1 }^{  \mathcal{T} (  \mathcal{L}_{ \mathsf{T} }   ) }   $      undecidable?

\end{open problem}

\subsection{Finite Sets of  Finite  Binary Trees}

The proofs we give depend  on our ability to code finite sequences of finite  binary trees. 
For this purpose, it will be convenient for us   to think of finite binary trees as finite sets.
We restrict ourselves to looking at finite sets of certain finite binary trees in order to have  a quantifier-free definition of the membership relation. 
It will however be the case that each finite binary tree determines a finite set.

\begin{definition} \label{SetMembershipDefinition}

Let  $ \alpha  \in \mathbf{H} $. 
Then, $ x \in_{ \alpha }  y $ is shorthand for 
$ \langle x, \alpha \rangle \sqsubseteq y \;  \wedge  \;   \alpha \not\sqsubseteq   x $.

\end{definition}

It is important to notice that our definition of the membership relation is quantifier-free.
In particular,  subformulas of the form $ x \in_{ \alpha }  y $ will not hide any  quantifier complexity.  
This is also true of  the structure 
 $\mathcal{T} (  \mathcal{L}_{ \mathsf{BT} }   ) $  
since we chose to define the subtree relation as follows: 
$  x \sqsubseteq y  \equiv  \;   y [ x \mapsto   \langle x ,  x  \rangle  \,   ]  \neq y   \ $.

We give a few examples.

\begin{example}

Let $ \alpha  $ be $ \perp $. 
Then, every finite  binary tree  encodes the empty set since all  finite binary trees have  $ \perp $ as a subtree.

\end{example}

\begin{example}

Let $ \alpha  $ be $  \langle   \perp ,  \perp  \rangle  $. 
The only finite binary tree that does not have $ \langle   \perp ,  \perp  \rangle   $ as a subtree is $ \perp $. 
Hence, every finite binary tree encodes the empty set or the singleton set $ \lbrace \perp \rbrace $. 

\end{example}

\begin{example}

Let $ \alpha   \equiv  \;    \langle  \perp   \,  ,   \,     \langle  \perp ,  \perp  \rangle  \,   \rangle $. 
We now have  more sets since there are infinitely many finite binary trees that do not have $  \alpha$ as a subtree. 
For example, 
if  $ w_0, w_1, w_2 $ are finite binary trees that do not have $  \alpha $ as a subtree, 
then each one of  the   trees    in Figure \ref{ExamplesSetMembership}   encodes  the set $ \lbrace w_0, w_1, w_2   \rbrace  $.

\begin{figure}

\begin{center}

\begin{forest}
[  
[  
 [    [$w_0$] [$ \alpha $]    ]  
 [    [$w_1$] [$ \alpha $]    ]
]
 [    [$w_2$] [$ \alpha $]    ]
]
\end{forest}      
\qquad 
\begin{forest}
[ 
 [    [$w_1$] [$ \alpha $]    ]
[
 [    [$w_0$] [$ \alpha $]    ]
 [    [$w_2$] [$ \alpha $]    ]
 ]
]
\end{forest}

\end{center}

\caption{Representations of the finite set $ \lbrace w_0, w_1, w_2   \rbrace  $. }
\label{ExamplesSetMembership}
\end{figure}
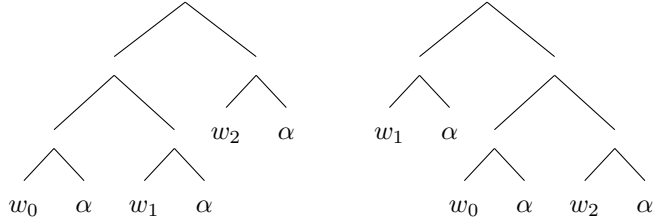

\end{example}

 \subsection{Notation}  \label{SecondNotationSystem}

We will encounter many cases where we need to associate a finite sequence of binary trees on the meta level with a term in the formal language. 
 To improve readability, we introduce the following notation
 \begin{itemize}

\item[-]    $  \langle  x  \rangle   \equiv \   x $ and 
  $ \langle   x_1, \ldots ,  x_n ,  x_{n+1 }   \rangle  \equiv  \   
   \langle   \,   \langle  x_1, \ldots , x_n   \rangle  \,  ,  \,  x_{n+1} \rangle $  for $n >0  $

\item[-]  $  \langle   x_1, \ldots , x_{n }   \rangle   ^{ \frown }   \langle   y_1, \ldots , y_{m }   \rangle     \equiv  \  
 \langle     x_1, \ldots , x_{n } , y_1, \ldots , y_m  \rangle   $

  \item[-] $ t^1 \equiv   \   \langle  t  \rangle  $ and $  t^{ n+1 }  \equiv  \      t^n  \, ^{ \frown } \,  \langle  t  \rangle     $ for $ n> 0 $

\item[-]  $ \langle   x_1, \ldots , x_{n }  \rangle^1 \equiv  \  \langle   x_1, \ldots , x_{n }  \rangle $

\item[-]  $  \langle   x_1, \ldots , x_{n }  \rangle^{k+1}  \equiv  \
      \langle   x_1, \ldots , x_{n }  \rangle^{ k }   \,  ^{ \frown}    \,      \langle   x_1, \ldots , x_{n }  \rangle    \;       $.

\end{itemize}

For example, 
occurrences of   $ x ^{ \frown } y $,  $ x^2 \,  ^{ \frown }  \,  y $  and $ x  \,  ^{ \frown }  \,  y^2 $
in formal formulas should be interpreted as  shorthand notation for 
$ \langle  x, y  \rangle  $,  $ \langle  x, x, y \rangle $ and $ \langle  x, y, y  \rangle  $, respectively.

 \subsection{Post`s  Correspondence Problem}     \label{UndecidableFragmentsPartISubtreeRelation}

 We show that  $ \Sigma_{1, 0, 2 }^{  \mathcal{T} (  \mathcal{L}_{ \mathsf{T} }   ) }   $  is undecidable by giving 
 a many-to-one reduction of  Post`s correspondence problem  (see Post  \cite{Post}).

 Let $ \lbrace  0, 1   \rbrace^{+} $ denote the set of all nonempty binary strings.

\begin{definition}

{\em The  Post  Correspondence Problem  (PCP)}  is given by
\begin{itemize}

\item Instance: a list of pairs $\langle a_{1}, b_{1} \rangle,\ldots , \langle a_{n}, b_{n}  \rangle$ where 
$a_i ,b_i  \in   \lbrace  0, 1   \rbrace^{+}$

\item Solution:  a finite nonempty sequence $i_{1},...,i_{m}$ of indexes such that 
\[
a_{i_{1}} a_{i_{2}}\ldots  a_{i_{m}} = b_{i_{1}}  b_{i_{2}}   \ldots  b_{i_{m}} 
\  .
\]

\end{itemize}

\end{definition}

Venkataraman  \cite{VenkataramanK1987}   proved that the fragment 
$ \bigcup_{ n, m , k  \in \mathbb{N} }  \Sigma_{n, m , k }^{  \mathcal{T} (  \mathcal{L}_{ \mathsf{T} }   ) }   $
is undecidable by giving a many-to-one  reduction of PCP. 
An inspection of the proof shows that what is proved  is actually that  the fragment 
$ \Sigma_{1, 2, 7 }^{  \mathcal{T} (  \mathcal{L}_{ \mathsf{T} }   )    } $   is undecidable. 
Venkataraman introduces a constant symbol $c$, 
a  unary operator $ d_i ( \cdot ) $ for each letter $d_i $ of the alphabet and a ternary function $f$. 
 The  string $ d_{i_1 }   \ldots d_{i_m } $ is represented as the term $ d_{i_1 } (  \ldots (d_{i_m } (c ) )  \ldots ) $. 
Venkataraman  then tries to capture that an instance of PCP has a solution if and only if there exists a witnessing term $t$  of the form 
 \[
 t = f( r_1, s_1, f (r_2, s_2, f ( \ldots  f( r_{m+1}, s_{m+1} , c )  \ldots )  )   )   
 \]
where $ r_{j } =  a_{i_{j}}\ldots  a_{i_{m}}  $, $ s_{j } =  b_{i_{j}}\ldots  b_{i_{m}}  $ and $r_{m+1} = s_{m+1 } $  is  the empty string.
It becomes  immediately clear  that at least three bounded universal quantifiers are needed since we need to talk about arbitrary subterms  of $t$ of the from $ f( x, y , z ) $. 
Two bounded existential quantifier are necessary to say that $t$ has the form $ t = f( u, u, v ) $.
We get a stronger result by encoding PCP differently.

\begin{theorem}   \label{FirstUndecidabilityProof}

The fragment   $ \Sigma_{1, 0, 2 }^{  \mathcal{T} (  \mathcal{L}_{ \mathsf{T} }   ) }   $    is  undecidable.

\end{theorem}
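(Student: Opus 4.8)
The plan is to give a many-to-one reduction of PCP to the fragment $\Sigma_{1,0,2}^{\mathcal{T}(\mathcal{L}_{\mathsf{T}})}$. The key idea is to encode the search for a PCP solution not as a single witnessing term of the form $f(u,u,v)$ (which would force two bounded existential quantifiers, as in Venkataraman's encoding), but instead as a single finite \emph{set} of finite binary trees, using the quantifier-free membership relation $x\in_\alpha y$ from Definition~\ref{SetMembershipDefinition}. Given an instance $P=\langle a_1,b_1\rangle,\ldots,\langle a_n,b_n\rangle$ of PCP, I would fix an appropriate constant tree $\alpha$ and encode a putative solution $i_1,\ldots,i_m$ by the set whose elements are trees coding the consecutive ``configurations'' $\langle j, r_j, s_j\rangle$, where $r_j = a_{i_j}\cdots a_{i_m}$ and $s_j = b_{i_j}\cdots b_{i_m}$ are the suffixes still to be matched (binary strings coded as terms over $\perp,\langle\cdot,\cdot\rangle$ via a fixed coding of $0$ and $1$). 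The existentially quantified variable $x$ guesses a tree $W$ that encodes such a set; the two bounded universal quantifiers range over two elements $y,z\in_\alpha x$, and the quantifier-free matrix $\phi_0$ asserts the local consistency conditions that make $W$ a genuine run.

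The matrix $\phi_0(x,y,z)$ must say: (i) $x$ contains an initial configuration $\langle 0,\varepsilon,\varepsilon\rangle$ (or $\langle m,\varepsilon,\varepsilon\rangle$ depending on which direction we count) and a ``full'' configuration whose two string components are a common string $a_{i_1}\cdots a_{i_m}=b_{i_1}\cdots b_{i_m}\neq\varepsilon$ --- but crucially these two distinguished elements can be pinned down without extra quantifiers, e.g.\ by requiring $\langle\perp,\perp,\alpha\rangle\in_\alpha x$ for the base case and by reading off the ``top'' configuration from the structure of $x$ itself (this is where the precise design of $\alpha$ and of the tree-notation from Section~\ref{SecondNotationSystem} does the work); and (ii) for every pair of elements $y,z$ of $x$, if $y$ codes a configuration with counter $k$ and $z$ codes one with counter $k+1$, then there is an index $i\in\{1,\ldots,n\}$ such that the string pair in $y$ is obtained from that in $z$ by prepending $a_i$ resp.\ $b_i$; and if $y$ and $z$ code the \emph{same} counter then $y=z$ (functionality of the run). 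Since $n$ is fixed by the instance, the disjunction over $i$ is a finite quantifier-free disjunction, and ``prepend a fixed string $a_i$'' is expressible by a fixed quantifier-free term equation. The assertion $x\in_\alpha y$ is quantifier-free by Definition~\ref{SetMembershipDefinition}, so the whole matrix stays quantifier-free and the sentence lies in $\Sigma_{1,0,2}$.

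For correctness I would argue both directions. If $P$ has a solution $i_1,\ldots,i_m$, take $W$ to be the canonical tree coding the set $\{\langle j,r_j,s_j\rangle : 0\le j\le m\}$; one checks it satisfies $\phi_0$ for all bounded choices of $y,z$. Conversely, suppose $W$ satisfies $\exists x\,\forall y\sqsubseteq x\,\forall z\sqsubseteq x\,\phi_0$. The base clause forces a configuration with counter $0$ and both strings empty to be a member; the step clause, read as ``every counter-$k$ member has a counter-$(k{+}1)$ successor member related by some index $i$,'' lets us extract, by induction on the counter, a finite index sequence; and because $W$ is a finite tree the counters are bounded, so this sequence terminates at the distinguished ``top'' member, whose two string components are equal and nonempty by clause (i) --- giving a PCP solution. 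Functionality (the $k=k$ case) is needed so that the extracted sequence is well-defined. Decidability of $\mathcal{T}(\mathcal{L}_{\mathsf{T}})$'s full first-order theory fails as a consequence, but more precisely we get undecidability of the named fragment.

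\textbf{Main obstacle.} The delicate point is arranging the coding of $\alpha$, of the counter, and of the string pairs so that (a) membership $\in_\alpha$ really captures the intended finite set (no spurious members sneaking in as subterms, which is exactly why the side condition $\alpha\not\sqsubseteq x$ appears in Definition~\ref{SetMembershipDefinition}, and why $\alpha$ must be chosen with enough structure that the string codes and counter codes never contain $\alpha$ as a subtree), and (b) the two distinguished endpoint configurations of the run are identifiable by a quantifier-free condition rather than by additional quantifiers --- pinning down the ``top'' element is the part most likely to require a clever structural trick (e.g.\ making $x$ itself be the top configuration paired with the set of all the others, so $x$'s outermost shape exposes it). Getting from two bounded universals down to the claimed count, while keeping the step relation expressible with a \emph{fixed} quantifier-free matrix independent of which bounded elements are chosen, is the crux; everything else is routine term bookkeeping using the notation of Section~\ref{SecondNotationSystem}.
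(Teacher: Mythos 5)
Your high-level strategy---one existential quantifier guessing a finite set via the quantifier-free membership relation $x\in_\alpha y$, two bounded universals enforcing a local closure condition, reduction from PCP---is exactly the paper's. But the concrete encoding you sketch has a genuine gap, and it is precisely the point you flag as the ``main obstacle.'' Your configurations are triples (counter, remaining suffix of the $a$-side, remaining suffix of the $b$-side), and your step clause must relate the \emph{components} of two opaque set-elements $y$ and $z$: ``the counter of $z$ is the counter of $y$ plus one, and the strings of $y$ arise from those of $z$ by prepending $a_i$, $b_i$.'' With only $\perp$, $\langle\cdot,\cdot\rangle$, $\sqsubseteq$ and the variables $x,y,z$, there is no quantifier-free way to decompose $y$ and $z$ into their three components; naming the components costs extra (bounded) quantifiers, which is exactly why Venkataraman's suffix-based encoding needs more universal quantifiers than two. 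Likewise, modifying the counter and both string components simultaneously is not a term operation applied to the whole triple, so the required successor cannot be written as a term in $y$; and the identification of the ``top'' configuration (equal, nonempty strings) is an additional global condition you admit you do not know how to express without further quantifiers. As written, the matrix $\phi_0$ cannot be made quantifier-free, so the sentence does not land in $\Sigma_{1,0,2}$.

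The paper's proof removes all of these difficulties by changing the data stored in the set: it keeps no counter and stores the growing \emph{prefix} pairs $\langle a_{i_1}{}^\frown\cdots{}^\frown a_{i_j},\, b_{i_1}{}^\frown\cdots{}^\frown b_{i_j}\rangle$ rather than the shrinking suffixes. The two bounded universal variables $L,R$ are then the two components of a pair-member themselves (note $u\in_\alpha T$ implies $u\sqsubseteq T$, so subtrees of $T$ cover them), membership $\langle L,R\rangle\in_\alpha T$ is quantifier-free, and the successor $\langle L^\frown a_i,\,R^\frown b_i\rangle$ is a fixed \emph{term} in $L,R$ because appending a fixed string is a term operation under the left-nested coding. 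Termination is folded into the guard: the step clause only fires when $L\neq R$, so the finiteness of the set (each member is a subtree of $T$, and successors are strictly larger) forces some member with $L=R$, which is the PCP solution. Consequently no initial/final configuration needs to be pinned down, no functionality condition is needed, and no counter arithmetic is needed. If you reorganize your encoding along these lines, your argument goes through; in its current counter-and-suffix form it does not.
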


\begin{proof}

We start by translating concatenation of finite strings. 
Let   $  0 \equiv   \;  \perp^3 $ and $  1  \equiv   \;  \perp^4 $. 
Consider a nonempty  binary string  $ w = w_1  \ldots w_k $ where  $ w_i \in   \lbrace  0, 1   \rbrace   $ for each $ 1 \leq i \leq k $.
We represent $w $ in the formal language as $  \langle   w_1, \ldots , w_k   \rangle   $. 
We represent $ xw $ as  $  x^{ \frown}  w  $.
Recall that $  x^{ \frown}  \langle   w_1, \ldots , w_k  \rangle    \equiv   \;   \langle   x, w_1, \ldots , w_k  \rangle  $. 
For example,  if  $ w = w_1 w_2 w_3 $, 
then $0$, $1$,  $w$ and  $  xw  $ are drawn, respectively,   in Figure \ref{TranslationOfStringConcatenation}.

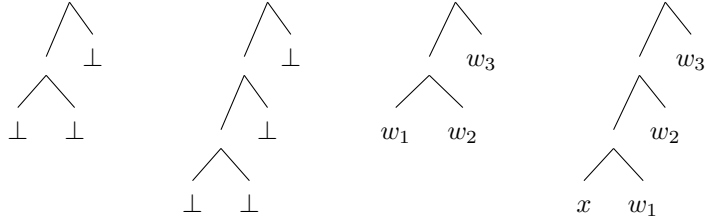
\begin{figure}

\begin{center}

\begin{forest}
[
[
[$\perp$ ]
[ $\perp$ ]
]
[$\perp$]
]
\end{forest}   
\qquad  
\begin{forest}
[
[
[
[$\perp$ ]
[ $\perp$ ]
]
[ $\perp$ ]
]
[$\perp$]
]
\end{forest}    
\qquad  
\begin{forest}
[
[
[$w_1$ ]
[ $w_2$ ]
]
[$w_3$]
]
\end{forest}  
\qquad  
\begin{forest}
[ 
[
[
[$x$]
[$w_1$ ]
]
[ $w_2$ ]
]
[$w_3$]
]
\end{forest}

\end{center}

\caption{Translation of the binary strings  $0$, $1$,  $w = w_1 w_2 w_3$ and  $  x w  $
 in the proof of Theorem   \protect\ref{FirstUndecidabilityProof}.}
\label{TranslationOfStringConcatenation}
\end{figure}

Before proceeding, we need to choose  a suitable parameter $ \alpha $  for our  definition of set membership  
(see Definition \ref{SetMembershipDefinition}). 
Since we want to talk about finite sets of binary strings, 
 a representation of a  binary string cannot have $ \alpha $ as a subtree.
 We let 
\[
x \in y \equiv  \    x \in_{ \alpha }  y 
\   \   
\mbox{ where } 
 \alpha  \equiv   \;       \langle   \perp   \,  ,  \,     \perp^2      \rangle
\      .
\]
We remind the reader that if $ x \in y $, then $ x \sqsubseteq y $. 
This is important to see that the  bounded universal quantifiers in the formula $ \phi $ below cover the search space we are interested in.

Given an instance  
$\langle  a_{1}, b_{1}  \rangle ,    \ldots ,  \langle  a_{n}, b_{n}   \rangle  $ 
of PCP, we need to compute a $ \Sigma_{1, 0, 2 }$-sentence $ \phi $ that is true in 
$  \mathcal{T} (  \mathcal{L}_{ \mathsf{T} }   ) $ 
if and only if there exists a finite nonempty sequence $i_{1},  \ldots  ,i_{m}$ of indeces such that 
$
a_{i_{1}} a_{i_{2}}\ldots  a_{i_{m}} = b_{i_{1}}  b_{i_{2}}   \ldots  b_{i_{m}} 
$.
The existence of   $i_{1},  \ldots  ,i_{m}$  is equivalent to the existence of a finite set   $T$ that satisfies the following 
\begin{enumerate}

\item[-] there exists $ i \in \lbrace  1 , \ldots ,  n  \rbrace  $ such that     $ \langle a_i , b_i \rangle    \in  T $

\item[-] if $ \langle L, R   \rangle   \in T $ and $ L \neq R $, then 
$ \langle  L ^{ \frown } a_i    ,  \,   R ^{ \frown } b_i        \rangle  \in  T  $
for some  $ i \in \lbrace  1 , \ldots ,  n  \rbrace  $.

\end{enumerate}

Given a solution $i_{1},   \ldots  ,i_{m}$,   the witnessing set  $T$ can be any finite  binary tree that encodes the set 
\[
 \lbrace   \langle  a_{i_1} ^{ \frown } \ldots ^{ \frown }  a_{ i_j }   , \;     b_{i_1} ^{ \frown } \ldots ^{ \frown }  b_{ i_j }      \rangle 
:    \   1 \leq j \leq m \rbrace  
\     .
\]

We let $ \phi $ be the following sentence  
\begin{multline*}
\phi \equiv   \  
\exists T  \;   \forall L, R  \sqsubseteq T   \;    
 \Big[         \    
\bigvee_{ i = 1 }^{ n } 
 \langle   a_i , \,  b_i  \rangle   \in  T     
\     \wedge   \   
\\
\Big(   \       
\big(        \               \langle  L , \,  R   \rangle  \in  T    \;  \wedge   \;   L \neq R       \       \big)  
\rightarrow
\\
\bigvee_{ i = 1 }^{ n }     
\langle  L ^{ \frown } a_i    ,  \,     R ^{ \frown } b_i   \rangle   \in   T   
 \       \Big) 
\        \Big] 
\          . 
\qedhere
\end{multline*}

\end{proof}

 \subsection{The Modulo Problem}    \label{UndecidableFragmentsPartISubstitutionOperator}

In this section, we show that the fragment  $ \Sigma_{ 1 , 0, 1}^{ \mathcal{T} (  \mathcal{L}_{ \mathsf{BT} }   ) } $ is undecidable. 
We cannot prove this result by giving a many-to-one reduction of PCP since expressing that an instance of PCP has a solution necessitates the use of two bounded universal quantifiers. 
Instead, we give  a many-to-one reduction of the   Modulo Problem,  which is an arithmetical problem. 
The Modulo Problem is introduced in  Kristiansen \& Murwanashyaka \cite{KristiansenMurwanashyakaAML} where it is used to characterize undecidable fragments of finitely generated free semigrooups extended with natural binary relations on strings such as the prefix relation and the substring relation.  
Undecidability of the Modulo Problem follows from  undecidability of a generalized version of the Collatz conjecture
studied first  by    Conway  \cite{Conway}   and  then by   Kurtz \& Simon \cite{kurtz}. 
The main difference between  the   Modulo Problem  and PCP  is that  the  Modulo problem is about sequences of 1-tuples  while PCP is about sequences of 2-tuples.

\begin{definition} \label{ModuloProblemDefinition}

Let  $f^N$ denote  the $N^{\mbox{{\scriptsize th}}}$ iteration of  $f$: 
  \[
  f^0 (x)=x  \     \       \mbox{  and   }    \       \       f^{N+1}(x)= f(f^N(x))
  \           .
  \] 
{\em The Modulo Problem} is given by
\begin{itemize}
\item Instance: a list of pairs $\langle A_{0}, B_{0} \rangle,\ldots , \langle A_{M-1}, B_{M-1} \rangle$ where 
$M>1$ and $A_i,B_i \in \mathbb{N} $  for $i=0,\ldots, M-1$.
\item Solution:  a natural number $N$ such that $f^N(3)=2$ where
$$
f(x) \; = \; A_jz+ B_j
 $$
if there exists $j \in \lbrace 0, 1, \ldots ,  M-1 \rbrace $ such that $x=Mz+j$.
\end{itemize}

\end{definition}

We need to show that given an instance of the Modulo Problem, 
 we can compute a   $ \Sigma_{1,0, 1 } $-sentence that is true in   
 $  \mathcal{T} (  \mathcal{L}_{ \mathsf{BT} }   )   $  
  if and only if  the instance has a solution. 
As we can see from the definition, to encode   the Modulo Problem, we need to encode  linear polynomials in one variable.

\begin{theorem} \label{UndecidabilitySigma101BT}
The fragment   $ \Sigma_{ 1 , 0, 1}^{ \mathcal{T} (  \mathcal{L}_{ \mathsf{BT} }   ) } $    is  undecidable.
\end{theorem}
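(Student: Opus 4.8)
The plan is to mimic the structure of the proof of Theorem \ref{FirstUndecidabilityProof}, replacing PCP by the Modulo Problem, and exploiting the fact that in $\mathcal{T}(\mathcal{L}_{\mathsf{BT}})$ we have the substitution operator available. The key payoff of the substitution operator is that it lets us express, with a \emph{single} bounded universal quantifier, a closure condition of the shape ``for every $x$ in the coded set $T$, the successor/image of $x$ is again in $T$'' — whereas in $\mathcal{L}_{\mathsf{T}}$ the analogous statement for PCP needed two bounded universals because each list element was a \emph{pair} $\langle L,R\rangle$ and one had to unpack it. Since the Modulo Problem is about sequences of $1$-tuples rather than $2$-tuples, one bounded universal quantifier suffices. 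Concretely, I would first fix a parameter $\alpha$ for the membership relation $x\in y\equiv x\in_\alpha y$ of Definition \ref{SetMembershipDefinition}, chosen so that the terms representing natural numbers do not contain $\alpha$ as a subtree (so that if $x\in y$ then $x\sqsubseteq y$, which is what makes a bounded universal quantifier over $T$ capture the whole relevant search space). I would represent a natural number $k$ by a canonical term $\underline{k}\in\mathbf{H}$ — for instance a unary ``tally'' encoding $\underline{0}\equiv\perp$, $\underline{k+1}\equiv\langle\underline{k},\perp^2\rangle$ or similar — chosen disjoint in shape from $\alpha$.

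The next step is to encode the transition function $f$ of the Modulo Problem as a quantifier-free $\mathcal{L}_{\mathsf{BT}}$-formula relating $\underline{x}$ and $\underline{f(x)}$. This is the technical heart of the construction: I need, uniformly in the instance $\langle A_0,B_0\rangle,\dots,\langle A_{M-1},B_{M-1}\rangle$, a quantifier-free formula $\theta(u,v)$ such that $\mathcal{T}(\mathcal{L}_{\mathsf{BT}})\models\theta(\underline{x},\underline{y})$ iff $y=f(x)$. The residue-test ``$x\equiv j\pmod M$'' and the affine update ``$y=A_j x+B_j$'' must be expressed on the tally encoding. Here I expect to use the substitution operator to implement the arithmetic: multiplication of a tally by a constant $A_j$ can be realized by repeated substitution, $\underline{A_j\cdot k}$ being obtained from $\underline{k}$ by substituting the ``unit block'' with an $A_j$-fold block (a fixed term built by iterating the $(\cdot)^{\frown}(\cdot)$ / exponent notation of Section \ref{SecondNotationSystem}), and addition of $B_j$ by concatenating a fixed block; the mod-$M$ test can likewise be phrased via substituting $M$-blocks, or by a finite disjunction over the possible ``shapes'' of $\underline{x}$ modulo $M$. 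Since $M$, the $A_j$ and the $B_j$ are fixed numerals coming from the instance, all of this produces a genuinely quantifier-free formula (a finite disjunction of equations among substitution-terms), not merely a $\Sigma$-formula.

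With $\theta$ in hand, I would set
\[
\phi\;\equiv\;\exists T\;\forall x\sqsubseteq T\;\Big[\,\underline{3}\in T\;\wedge\;\big(\,(x\in T\wedge x\neq\underline{2})\rightarrow\bigvee_{y}\,(\,\theta(x,y)\wedge y\in T\,)\,\big)\,\Big],
\]
where the inner disjunction ranges over the finitely many possible values of $f(x)$ coming from the $M$ cases (so that $y$ is not really quantified but is a term determined by $x$'s residue class — or, if it is cleaner, one keeps $y$ as a genuine further bounded quantifier and checks the count; the point of using $\theta$ as a \emph{function graph} is precisely to keep the quantifier budget at one bounded universal). The soundness direction is routine: a solution $N$ with $f^N(3)=2$ gives the finite set $\{\underline{f^i(3)}:0\le i\le N\}$, any tree-code of which witnesses $\exists T$. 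For completeness, if some $T$ satisfies the matrix, then starting from $\underline 3\in T$ and iterating the closure clause, one extracts a sequence $3,f(3),f^2(3),\dots$ all of whose codes lie in $T$; since $T$ (being a subterm-set of a fixed finite tree) is finite, this sequence must repeat or reach $\underline 2$, and one argues — using that $f$ is deterministic and that the encoding is injective — that it must actually reach $\underline 2$, i.e.\ the instance has a solution.

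The main obstacle I anticipate is the middle step: getting a \emph{quantifier-free} $\mathcal{L}_{\mathsf{BT}}$-formula $\theta(u,v)$ that faithfully computes one step of $f$, including the modular case distinction and the affine maps $z\mapsto A_jz+B_j$, purely through $\perp$, pairing, and the substitution operator, and doing so \emph{uniformly and computably} in the instance so that $\phi$ is actually the output of an algorithm. Everything else — choosing $\alpha$, verifying $x\in y\Rightarrow x\sqsubseteq y$, the forward and backward correctness arguments — should be straightforward given the coding machinery already set up in Sections \ref{UndecidableFragmentsPartISubtreeRelation}--\ref{SecondNotationSystem} and the undecidability of the Modulo Problem imported from Kristiansen \& Murwanashyaka \cite{KristiansenMurwanashyakaAML}.
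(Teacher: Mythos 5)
Your high-level plan coincides with the paper's: reduce the Modulo Problem (imported from \cite{KristiansenMurwanashyakaAML}), code finite sets via $\in_{\alpha}$ with a parameter $\alpha$ avoided by the numerals, and spend exactly one unbounded existential (for $T$) and one bounded universal. But the step you yourself flag as the ``technical heart'' is a genuine gap, and it is precisely where the paper does something different. You quantify the bounded universal variable over the \emph{elements} $x$ of $T$ and then need the successor $y=f(x)$ to be available either as a term in $x$ or via a quantifier-free graph formula $\theta(x,y)$ with $y$ a term determined by $x$. This cannot work as stated: $f(x)=A_j\lfloor x/M\rfloor+B_j$ requires extracting the quotient $\lfloor x/M\rfloor$ from the tally for $x$, and on a left-comb numeral the substitution operator only lets you replace the (unique) bottom segment or all leaves, so terms in $x$ can add or multiply by fixed constants but cannot divide by $M$; likewise ``$x\equiv j\pmod M$'' is not quantifier-free in $x$ alone. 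Your fallback of ``keeping $y$ as a genuine further bounded quantifier'' is not available either: that produces a $\Sigma_{1,1,1}$-sentence, outside the fragment $\Sigma_{1,0,1}$ being proved undecidable.

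The paper's missing idea is to let the single bounded universal variable be the \emph{quotient} $z$ rather than the element of $T$. Writing $L(z)\equiv z[\,0\mapsto z\,]$ and iterating, every linear polynomial $nz+m$ becomes a term in $z$, so both the antecedent element $Mz+j$ and its image $A_jz+B_j$ are terms in the same universally quantified $z$, and the matrix
$3\in T\ \wedge\ \bigwedge_{j}\bigl((Mz+j\in T\wedge Mz+j\neq 2)\rightarrow A_jz+B_j\in T\bigr)$
is quantifier-free; no division is ever performed. The point that makes the bound $\forall z\sqsubseteq T$ sufficient is that the numeral for $z$ is a subtree of the numeral for $Mz+j$, hence of $T$ whenever $Mz+j\in T$, so every relevant quotient is covered. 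Reorienting your construction around the quotient variable in this way (and then reusing your soundness/completeness argument essentially verbatim) closes the gap; as written, the reduction does not go through.
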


\begin{proof}

We continue to work with the following definition of set membership 
\[
x \in y \equiv  \    x \in_{ \alpha }  y 
\   \   
\mbox{ where } 
 \alpha  \equiv   \;       \langle   \perp   \,  ,  \,     \perp^2      \rangle
\      .
\]

Recall that   $ t[ r \mapsto s ] $  denotes  the term we obtain by replacing each occurrence of $r$ in $t$ with $ s $. 
We encode  natural numbers as follows:  $  n \equiv \;   \perp^{ n+2 }   $. 
For example, the natural numbers  $0$, $1 $, $2$, $3$  are drawn  in Figure \ref{TranslationOfNaturalNumbers}.

\begin{figure}

\begin{center}

\begin{forest}
[
[ $ \perp $  ]    [ $ \perp $  ]
]
\end{forest}
\qquad
\begin{forest}
[
[
[ $ \perp $  ]    [ $ \perp $  ]
]
[ $ \perp $  ]
]
\end{forest}
\qquad
\begin{forest}
[
[
[ 
[ $ \perp $   ]    [ $ \perp $   ]
]
[ $ \perp $    ]
]
[ $ \perp $    ]
]
\end{forest}
\qquad
\begin{forest}
[
[
[ 
[
[ $ \perp $   ]    
[ $ \perp $   ]    
]
[ $ \perp $   ]
]
[ $ \perp $   ]
]
[ $ \perp $    ]
]
\end{forest}

\end{center}

\caption{
Translation of the natural numbers $0, 1, 2, 3 $ in the proof of Theorem  \protect\ref{UndecidabilitySigma101BT}.
}
\label{TranslationOfNaturalNumbers}
\end{figure}
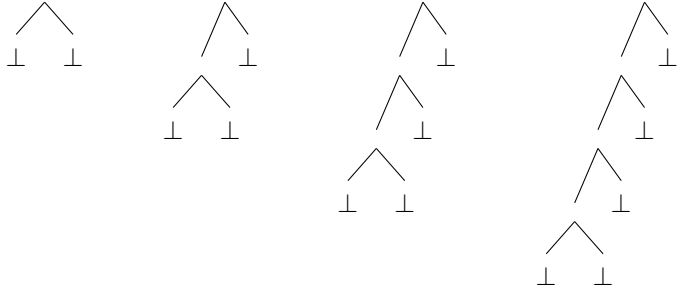

The next step is to associate    linear polynomials in one variable with  $ \mathcal{L}_{ \mathsf{BT}  }  $-terms.
Let $ L(z)  \equiv   \    z [   0   \mapsto    z  ]     $. 
If $z $ represents the natural number $q$, then $ L(z) $ represents the natural number $ 2q$
since $0 $ has exactly one occurrence in $ z $
 (see Figure  \ref{TranslationOfNaturalNumbers}).
 Recall that $ L^{ 0 }  (z) = z $ and $ L^{ k+1 } (z) = L ( L^{ k } (z) ) $.
Hence, if $ n>0 $, then $ L^{ n-1} (z) $ represents the natural number  $ nq $.
If $ n >0 $,  then the term $ m [ 0 \mapsto  L^{ n-1} (z)  ] $ represents the natural number $ nq + m $.
We complete our translation of linear polynomials  in one variable as follows: 
For any  formula $ \phi (x) $ where $x$ is a free variable in $ \phi $
\[
\phi (  n z +m   )     \equiv  \   \begin{cases}
 \phi ( m  )                                                                          &    \mbox{ if } n = 0 
 \vspace*{0.05cm}
\\
\phi (  m [ 0 \mapsto  L^{ n-1} (z)  ]     )                              &     \mbox{ if  }  n>0            \           .
\end{cases}
\]

Given an instance  $\langle A_{0}, B_{0} \rangle,\ldots , \langle A_{M-1}, B_{M-1} \rangle$ , 
we need to compute  a  $ \Sigma_{ 1 , 0, 1} $-sentence  $ \psi $ that is true in
 $\mathcal{T} ( \mathcal{L}_{ \mathsf{BT} }  )  $ 
 if and only if the instance has a solution. 
The sentence  $ \psi $ needs to say that there exists a  finite set  $T$ such that 
\begin{itemize}

\item[-]  $3\in T  $ and $ 2 \in T $

\item[-]  if  $  2 \neq  Mz +j \in T $ and $ 0 \leq j < M $, then   $ A_j   z +  B_j   \in T $.

\end{itemize}

With this in mind,  we   let $ \psi $ be the following sentence 

\begin{multline*}
\exists T   \;    \forall z  \sqsubseteq T     \Big[     \    
3 \in  T  
 \    \wedge      \  
\\
  \bigwedge_{ j= 0}^{ M-1}   
\Big(    \    
(   \      Mz + j  \in  T     \;   \wedge   \;      Mz + j \neq 2     \         )   
  \rightarrow  
 A_j z + B_j   \in  T  
\     \Big) 
\    \Big] 
\         .
\qedhere 
\end{multline*}

\end{proof}

The preceding result says that we have undecidability with only one existential quantifier and one bounded universal quantifier. 
Clearly, it is decidable whether a $ \Sigma$-sentence  with no occurrence of unbounded existential quantifiers is true. 
Hence, the next question is whether we can  have undecidability without bounded universal quantifiers. 
In Section \ref{UndecidabilityExistentialTheorySubstitutionOperator} and 
Section \ref{ManyToOneReductionOfPCPExistentialFragment}, 
we show that  existential theory of $   \mathcal{T} (  \mathcal{L}_{ \mathsf{BT} }   )  $ is undecidable.

 \subsection{The Modulo Problem II}

In this section, 
we give another proof of undecidability of   $ \Sigma_{1, 0, 2 }^{  \mathcal{T} (  \mathcal{L}_{ \mathsf{T} }   ) }   $  
by encoding the Modulo Problem. 
We let  $  x  \sqsubset  y  \equiv   \   x \sqsubseteq y   \,  \wedge   \,  x  \neq y  $.

\begin{proof} [Second proof of Theorem \ref{FirstUndecidabilityProof} ]

We continue to work with the following definition of set membership 
\[
x \in y \equiv  \    x \in_{ \alpha }  y 
\   \   
\mbox{ where } 
 \alpha  \equiv   \;    \langle   \perp   \,  ,  \,     \perp^2      \rangle
\      .
\]

We need to modify our translation of  addition and scalar multiplication since they use  the substitution operator. 
We encode natural numbers as follows:  $ n \equiv  \    \perp^{ n+1 }  $. 
On the meta level, we  associate a number $n $ with almost all terms  with $n+1$ occurrences of $\perp$.
We exclude  terms  that have $ \alpha $ as a subterm due to our definition of the membership relation.  
With this in mind,  we  encode  linear polynomial in one variable  as follows:
For  any natural numbers $n, m $ and any formula $ \psi (y) $ where $y$ is a free variable in $ \psi $  
\[
\psi  (  n  x  \oplus  m   )   \equiv    \   
 \begin{cases}
 \psi (nk+m  ) 
 &   \mbox{ if } x \mbox{ is the variable-free term  }    \perp^{ k+1 }
 \\  
  \psi ( m  )
&     \mbox{ if  }   n =   0       \vspace*{0.1cm}
\\
(       \   x = 0  \;  \wedge  \;   \psi (m)     \       )  
\   \vee   \   
&     \mbox{ if  }   n   \neq   0     \mbox{  and  }  m= 0 
\\
\quad
(      \   x \neq  0  \;  \wedge  \;   \psi (x^n )     \        )           \vspace*{0.1cm}
\\
(       \   x = 0  \;  \wedge  \;   \psi (m)     \       )  
\   \vee   \   
&       \mbox{ if }   n,  m    \neq   0 
\\
\quad
(      \   x \neq  0  \;  \wedge  \;   \psi (  \langle x^n , m   \rangle   )     \        )  
     \              .
     \end{cases}
\]

Recall that given an instance  $\langle A_{0}, B_{0} \rangle,\ldots , \langle A_{M-1}, B_{M-1} \rangle$ of the Modulo Problem, 
we need to compute  a $ \Sigma_{1, 0, 2 } $-sentence  $ \phi $ that is true in 
$   \mathcal{T} (  \mathcal{L}_{ \mathsf{T} }   )     $
  if and only if the instance has a solution.
 As before, we want $ \phi $ to express that  there exists a finite set  $T$ such that 
\begin{itemize}

\item[(1)]  $3\in T  $ and $ 2 \in T $

\item[(2)]  if  $  2 \neq  Mz  \oplus  j \in T $ and $ 0 \leq j < M $, then   $ A_j z \oplus B_j   \in T $.

\end{itemize}

We need to work harder  to find a formula that captures correctly (1)-(2). 
The problem is our definition of  $ \psi  (  n  x  \oplus  m   )  $. 
Assume $z $ represents the natural number $ q > 0 $ and   the antecedent in (2) holds. 
Assume $ A_j , B_j \neq 0 $ and $ A_j \neq M $. 
Then,   the binary tree $ \langle z^{ A_j }  \,    ,   \,    B_j \rangle   $  is an element of $T$ 
and represents the natural number $ A_j q + B_j $.
Now, assume $   2 \neq A_j q + B_j $ and    $ A_j q + B_j  = M d + i  $ where  $0 \leq  i < M $.  
For simplicity, assume  $ i \neq 0 $.
We need to ensure that $T$ contains a binary tree that represents the natural number $ A_i d + B_i $. 
This follows from Clause  (2)  if   $ \langle d^{ M} , i  \rangle $ is an element of $ T $.
Hence, the sentence $ \phi $  needs to    contain a subformula that ensures that  $ \langle d^{ M} , i  \rangle  \in T $.

Let $u$ be a finite binary tree with $ t+1 $ occurrences of $ \perp $. 
Let  $ \widetilde{u} $ denote the binary tree with the same number of $ \perp $ as $ u $ 
and that is of the form 
\[
\widetilde{u}    =  \begin{cases}
r              &  \mbox{ if  }   t = r 
\\
w^{ M }      &   \mbox{ if  } t = Ms 
\\
\langle w^{ M }  , r   \rangle     &    \mbox{ if  } t = Ms  + r
\end{cases}
\]
where $ 0 \leq r < M $,  $ s \neq 0 $ and $ w = \perp^{ s+1 }   $.
We can think of   $ \widetilde{u}  $ as the canonical representation of the natural number $ t $  with respect to   $M$.
The reasoning in the preceding paragraph shows that 
we  need  to ensure that    $ \widetilde{   A_j z \oplus B_j  }    \in T $ whenever $  A_j z \oplus B_j   \in T $.

 We want $ \phi $ to be  a sentence  of the form 
 
\begin{multline*}
\exists T \;    \forall R, S   \sqsubseteq  T   \;  
\Big[     \   
  \bigwedge_{ j = 1 }^{ M-1 }    \alpha_{j }  (R, S, T )    
  \   \wedge   \
  \\
3 \in  T  
  \   \wedge      \
  \\
   \bigwedge_{ j= 0}^{ M-1}   
   \Big(    \     
(        \      MR \oplus j   \in  T       \  \wedge   \       MR \oplus j \neq 2     \          )   
  \rightarrow  
  A_j R  \oplus B_j   \in  T  
  \     \Big) 
\    \Big]  
\end{multline*}
where  the quantifier-free  formula  $  \alpha_{j }  (R, S, T )      $ ensures that 
 $   \widetilde{   A_j z \oplus B_j  }    \in T   $ 
 when $  A_j z \oplus B_j  \in T  $.

We proceed to determine  $  \alpha_{j }  (R, S, T )      $. 
If  $A_j = 0 $ or $ A_j = M $, 
then we may  assume that  $A_j R \oplus B_j  $   is defined to be of the correct form.
 Hence, it suffices to  consider the following cases: 
(1)  $ 1 \leq A_j < M $, 
(2)  $ M< A_j $. 
 The idea is to express that there exists a sequence $ T_0,  \ldots , T_{K } $  of elements of $T$ such that 
 $ T_0 =    A_j z \oplus B_j   $  and 
 $ T_{K} =    \widetilde{   A_j z \oplus B_j  }   $. 
The formula  $  \alpha_{j }  (R, S, T )      $  describes how we obtain $T_{i+1} $ from $T_{ i } $. 
We introduce the following notation 
\[
x  +  k   \equiv   \  
\begin{cases}
x                                                         &  \mbox{ if }  k = 0 
\\
 x  ^{ \frown }   \perp^{ k^{ \prime }  }                        &  \mbox{ if }  k = k^{ \prime }  + 1  
 \end{cases}
\]
and 
\[
A x \oplus  B y  \oplus   m \equiv      \   
 \begin{cases} 
 x^{ A }  \,  ^{ \frown }  \,  y^{ B } 
 &    \mbox{ if  } m = 0 
\\
\\
\big\langle    x^{ A }   \,    ^{ \frown }  \,    y^{ B }     \, ,  \,  m   \big\rangle 
 &    \mbox{ if }  m \neq 0 
 \            .
\end{cases}
\]

We consider case (1). 
So, $ 1 \leq A_j < M $. 
Let $ M = k_j  A_j +  r_j $ where $ 0 \leq r_j < A_j $. 
Before  we define $  \alpha_{j }  (R, S, T )      $,  we explain how the formula  works. 
Assume for example $B_j \neq 0 $,  $ A_j = 2 $ and $ M = 5 $. 
At the start, we know that  $T$ contains  an element $ A_j R_0 \oplus B_j   $
(see the leftmost  tree in Figure \ref{ModuloProblemCaseOne}). 
The first step is to transform $ A_j R_0 \oplus B_j   $ into   an element of $T$ of the form 
$ A_j R_1  \oplus   (M-A_j )  1  \oplus   m_1 $ 
(see the second tree  from the left in Figure \ref{ModuloProblemCaseOne}) 
by decreasing $ R_0 $. 
Then, we want to transform  elements  of $T$ of the form 
$ A_j R_2  \oplus (M-A_j )   S_2  \oplus   m_2 $
 (see the third  tree from the left in Figure \ref{ModuloProblemCaseOne}) 
into  elements of $T$ of the form  $ A_j  R_3  \oplus   (M-A_j )   S_3  \oplus   m_3 $
 (see the fourth  tree from the left in Figure \ref{ModuloProblemCaseOne}) 
by decreasing $R_2$ with $ 1 \leq k  \leq k_j $ or by decreasing $m_2$  with $M$.

\begin{figure}

\begin{center}

\begin{forest}
[
[
[ $R_0$ ]   [$R_0$ ]
]
[$B_j$ ]
]
\end{forest}
\hspace*{0.2cm}
\begin{forest}
[
[
[
[
[
[ $R_1$ ]      [ $R_1$ ]  
]
[ $1$ ]  
]
[ $1$ ]   
]
[$1$ ]
]
[$m_1$ ]
]
\end{forest}
\hspace*{0.2cm}
\begin{forest}
[
[
[
[
[
[ $R_2$ ]      [ $R_2$ ]  
]
[ $S_2$ ]  
]
[ $S_2$ ]   
]
[$S_2$ ]
]
[$m_2$ ]
]
\end{forest}
\hspace*{0.2cm}
\begin{forest}
[
[
[
[
[
[ $R_3$ ]      [ $R_3$ ]  
]
[ $S_3$ ]  
]
[ $S_3$ ]   
]
[$S_3$ ]
]
[$m_3$ ]
]
\end{forest}

\end{center}

\caption{Case (1) in the second proof of Theorem \ref{FirstUndecidabilityProof}}
\label{ModuloProblemCaseOne}
\end{figure}
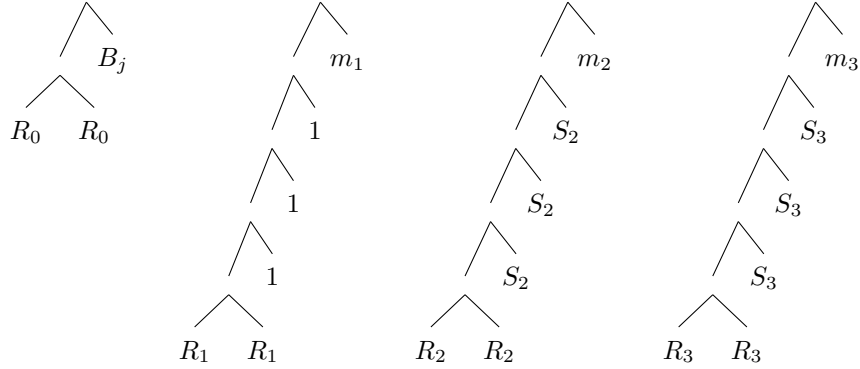

We let  $  \alpha_{j }  (R, S, T )      $ consist of the following conjuncts 

\begin{enumerate}

\item[(I) ]   for  $ 0  \leq k  \leq   k_j  $
\[   
(      \     
R = k   \;   \wedge   \;   A_j R \oplus B_j       \in  T   
\         ) 
\    
\rightarrow  
\     
B_j +  k A_j    \in   T   
\]

\item[(II) ]  for $ M \leq m  \leq  B_j   + 2M$
\[
   m   \in  T 
\   \rightarrow      \  
A_j 1  \oplus  (M-A_j )  1   \oplus     ( m - M  )     \in    T 
\]

\item[(III) ]
\begin{multline*}
(      \  
  1 \sqsubseteq R  
 \   \wedge   \  
 A_j ( R +  k_j )  \oplus B_j    \in  T   
   \         )
\   \rightarrow     \  
\\
A_j R  \oplus (M-A_j)   1   \oplus     ( B_j   +   (A_j - r_j )   )     \in  T 
\end{multline*}

\item[(IV) ]  for $ M \leq m  \leq  B_j   + 2M$
\begin{multline*}
A_j R \oplus (M-A_j )  S \oplus    m   \in  T 
  \   \rightarrow      \  
  \\
A_j ( R +  1 )    \oplus (M-A_j)   ( S + 1  )  \oplus     ( m - M  )      \in   T 
\end{multline*}

\item[(V) ]  for $ 0 \leq m  < M  $
\begin{multline*}
(      \     
S  \sqsubset  R    \;    \wedge    \;  
 A_j (R + k_j )  \oplus ( M-A_j )  S \oplus    m   \in  T 
 \      ) 
\   \rightarrow      \  
\\
A_j R   \oplus (M-A_j )   ( S +  1 )  \oplus     ( m  +  ( A_j - r_j )   )    \in    T 
\end{multline*}

\item[(VI) ]  for $ 0 \leq m  < M  $ and $ 1 \leq k  <  k_j  $
\begin{multline*}
(            \     
1 \sqsubseteq R   \   \wedge    \  
A_j (R  +  k )  \oplus  (M-A_j )   R \oplus    m   \in  T 
 \           ) 
\      \rightarrow     \  
\\
A_j R   \oplus (M-A_j )   R  \oplus     ( m  +  k A_j    )      \in  T 
\           .
\end{multline*}

\end{enumerate}

We consider case (2).
So, $ M< A_j $. 
Let $ A_j  = k_j  M +  r_j $ where $ 0 \leq r_j < M $. 
Before we we define $  \alpha_{j }  (R, S, T )      $,  we explain how the formula  works. 
Assume for example $ A_j = 5 $ and $ M = 2 $. 
At the start, we know that  $T$ contain an  element $ A_j R_0 \oplus B_j   $
(see the leftmost  tree in Figure \ref{ModuloProblemCaseTwo}). 
The first step is to transform $ A_j R_0 \oplus B_j   $ into  into an element of $T$ of the form 
$M R_1  \oplus (A_j - M)   S_1  \oplus    m_1 $
 (see the middle  tree  in Figure \ref{ModuloProblemCaseTwo}) 
by letting $ R_1 = R_0 +1 $ and $ S_1 = R_0 -1 $. 
Then, we want to transform  elements  of $T$ of the form 
$M  R_1  \oplus (A_j -M)   S_1  \oplus    m_1 $ 
into  elements of $T$ of the form  $ M R_2  \oplus (A_j -M)    S_2  \oplus    m_2 $
 (see the rightmost  tree  in Figure \ref{ModuloProblemCaseTwo})
by decreasing $S_1$ with $ 1 \leq k  \leq k_j $ or by decreasing $m_1$  with $M$.

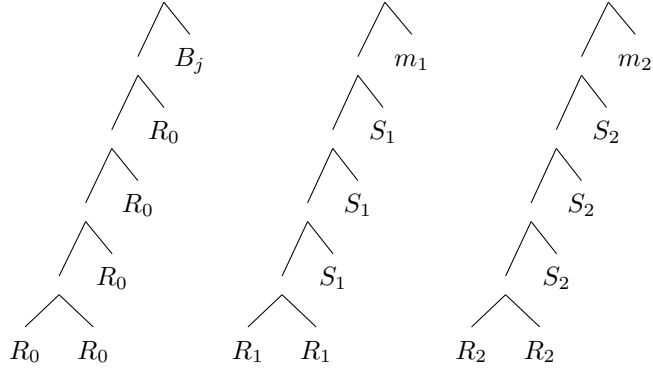
\begin{figure}

\begin{center}

\begin{forest}
[
[
[
[
[
[ $R_0$ ]      [ $R_0$ ]  
]
[ $ R_0 $ ]  
]
[ $ R_0 $ ]   
]
[$ R_0 $ ]
]
[$B_j$ ]
]
\end{forest}
\begin{forest}
[
[
[
[
[
[ $R_1$ ]      [ $R_1$ ]  
]
[ $S_1$ ]  
]
[ $S_1$ ]   
]
[$S_1$ ]
]
[$m_1$ ]
]
\end{forest}
\begin{forest}
[
[
[
[
[
[ $R_2$ ]      [ $R_2$ ]  
]
[ $S_2$ ]  
]
[ $S_2$ ]   
]
[$S_2$ ]
]
[$m_2$ ]
]
\end{forest}

\end{center}

\caption{Case (2) in the second proof of Theorem    \protect\ref{FirstUndecidabilityProof}}
\label{ModuloProblemCaseTwo}
\end{figure}

We let  $  \alpha_{j }  (R, S, T )      $ consist of the following conjuncts 

\begin{enumerate}

\item[(A) ]   
\[   
(      \     
R = 1   \;   \wedge   \;   A_j R  \oplus B_j   \in  T 
\         ) 
\    
\rightarrow  
\     
B_j +   A_j    \in   T   
\]

\item[(B) ]  for $ M \leq m  \leq  B_j +  2A_j  $
\[
 m     \in   T 
\        \rightarrow        \  
M 1    \oplus     ( m - M  )     \in   T 
\]

\item[(C) ]
\begin{multline*}
(        \   
1 \sqsubseteq R  \;  \wedge  \;  
A_j ( R +1  )  \oplus B_j   \in  T 
 \         )
\      \rightarrow        \
\\
M (R +  k_j  )   \oplus (A_j - M)    R   \oplus     ( B_j +  r_j   )     \in   T 
\end{multline*}

\item[(D) ]  for $ M \leq m  \leq  B_j + 2 A_j  $
\begin{multline*}
M  R \oplus (A_j -M)   S \oplus    m  \in  T 
\     \rightarrow      \  
\\
M ( R +1  )   \oplus (A_j-M)   S    \oplus      ( m - M  )     \in   T 
\end{multline*}

\item[(E)  ]  for $ 0 \leq m  < M  $
\begin{multline*}
(      \     
1  \sqsubseteq S  
  \;    \wedge    \;  
M R   \oplus (A_j -M)    (  S +1  ) \oplus    m  \in  T 
 \      ) 
\     \rightarrow      \   
\\
M (R  +  (k_j -1 ) )    \oplus (A_j -M)    S   \oplus     ( m  +   r_j    )     \in    T 
\end{multline*}

\item[(F)  ]  for $ 0 \leq m  < M  $
\[ 
 M R   \oplus (A_j-M)   1  \oplus    m    \in  T 
\       \rightarrow        \ 
M (R  +  (k_j -1 ) )     \oplus      ( m  +   r_j    )     \in   T 
\]

\item[(G)  ]     for $ M \leq m  \leq  B_j +  2A_j  $
\[
M R   \oplus    m     \in   T 
\        \rightarrow        \  
M ( R + 1  )    \oplus     ( m - M  )     \in   T 
\               .
\]

\end{enumerate}

This ends the second  proof of undecidability of   $ \Sigma_{1, 0, 2 }^{  \mathcal{T} (  \mathcal{L}_{ \mathsf{T} }   ) }    $.
\end{proof}

\section{Undecidable Fragments II}   \label{UndecidableFragmentsPartII}

Recall that  $ \mathcal{L}_{ \mathsf{T}^- } = \lbrace \sqsubseteq  \rbrace   $ and
 $ \mathcal{T}  ( \mathcal{L}_{ \mathsf{T}^- }  ) $  denotes   the restriction of     $ \mathcal{T}  ( \mathcal{L}_{ \mathsf{T} }  ) $
to  $  \mathcal{L}_{ \mathsf{T}^- }  $. 
In this section, we show that the $ \Sigma$-theory of $ \mathcal{T}  ( \mathcal{L}_{ \mathsf{T}^- }  ) $
is undecidable. 
That is, we show that there cannot exist an algorithm that takes as input a    
$ \mathcal{L}_{ \mathsf{T}^- } $  $ \Sigma$-sentence $ \phi $
and decides whether $ \phi $ is true in   $ \mathcal{T}  ( \mathcal{L}_{ \mathsf{T}^- }  ) $. 
The proof we give is a modification of  the proof of Theorem \ref{FirstUndecidabilityProof}.
The  basic idea is the same but we  need to work with multivalued functions since our language does not have  function symbols. 
A multivalued function from  $ \mathbf{H}^{ n} $ to $ \mathbf{H} $ is just a relation 
$ R \subseteq    \mathbf{H}^{ n +1 }  $ such that for all $ x_1,  \ldots , x_n \in \mathbf{H} $ there exists $ y \in \mathbf{H} $ such that 
$ R (x_1, \ldots , x_n , y ) $ holds. 
Recall that $ \mathbf{H} $ denotes the set of all finite full binary trees.

In the proof of Theorem \ref{FirstUndecidabilityProof}, 
we showed that the instance 
  $  \langle  a_{1}, b_{1}  \rangle  ,\ldots , \langle a_{n}, b_{n}  \rangle $ of PCP
has a solution if and only if the sentence 
\begin{multline*}
\phi \equiv   \  
\exists T  \;   \forall L, R  \sqsubseteq T   \;    
 \Big[         \    
\bigvee_{ i = 1 }^{ n } 
 \langle   a_i , \,  b_i  \rangle   \in  T     
\     \wedge   \   
\Big(   \       
\big(        \               \langle  L , \,  R   \rangle  \in  T    \;  \wedge   \;   L \neq R       \       \big)  
\rightarrow
\\
\bigvee_{ i = 1 }^{ n }     
\langle  L ^{ \frown } a_i    ,  \,     R ^{ \frown } b_i   \rangle   \in   T   
 \       \Big) 
\        \Big] 
\end{multline*}
is true in    $  \mathcal{T} (  \mathcal{L}_{ \mathsf{T} }   ) $. 
The idea is to  find a $  \mathcal{L}_{ \mathsf{T}^- }  $-sentence $ \psi $ that is true in 
$ \mathcal{T}  ( \mathcal{L}_{ \mathsf{T}^- }  ) $ if and only if $  \phi $  is true in  
$  \mathcal{T} (  \mathcal{L}_{ \mathsf{T} }   ) $. 
We use the following ingredients to construct  $ \psi $
\begin{itemize}

\item[-] we give a  definition of  $ s \in_{ \alpha }  T $ over   $ \mathcal{T}  ( \mathcal{L}_{ \mathsf{T}^- }  ) $

\item[-] we replace  $  \langle x , y \rangle  $ with a multivalued pairing function   
$ \mathsf{Pair}_{ \beta , \gamma }  (x , y , z  )  $ 
that takes $ \beta $ and $ \gamma $ as parameters

\item[-]   we replace    $  L ^{ \frown } a_i    $  with a multivalued function    $ \mathsf{Conc}_{ a_i }  (L, y )    $

\item[-]   we replace    $  R ^{ \frown } b_i    $  with a multivalued function    $ \mathsf{Conc}_{ b_i }  (R, y )    $.

\end{itemize}
The multivalued functions 
$ \mathsf{Conc}_{ a_i }  (x, y )    $, $ \mathsf{Conc}_{ b_i }  (x, y )    $ 
will be defined from the more  elementary multivalued  functions    
$ \mathsf{Conc}_{ 0 }  (x, y )    $,   $ \mathsf{Conc}_{ 1 }  (x, y )    $
by composing these functions in the obvious way.

The sentence $ \phi $ is true in  $  \mathcal{T} (  \mathcal{L}_{ \mathsf{T} }   ) $ if and only if 
 there exists   a finite nonempty sequence $i_{1},  \ldots  ,i_{m}$ of indexes such that 
$
a_{i_{1}} a_{i_{2}}\ldots  a_{i_{m}} = b_{i_{1}}  b_{i_{2}}   \ldots  b_{i_{m}} 
$.
The sequence  $ i_1,  \ldots , i_m $ exists  if and only if     (this is what  $ \psi $ captures)
\begin{itemize}

\item[-] there exist a sequence  of finite binary trees  $ A_1, B_1,   \ldots , A_m , B_m $ such that

\item[-]   $ \mathsf{Conc}_{ a_{i_{ 1 }  } }  (  \delta  , A_{ 1 } )    $   and $ \mathsf{Conc}_{ b_{i_{ 1 }  } }  ( \delta   , B_{ 1 } )    $ 
for some parameter $ \delta $

\item[-]  $ \mathsf{Conc}_{ a_{i_{ k+1 }  } }  (A_k  , A_{ k+1 } )    $   and $ \mathsf{Conc}_{ b_{i_{ k+1 }  } }  (B_k  , B_{ k+1 } )    $

\item[-]  $ A_m = B_m $.

\end{itemize}

We let  $  x  \sqsubset  y  \equiv   \   x \sqsubseteq y   \,  \wedge   \,  x  \neq y  $.

\begin{theorem}   \label{UndecidabilitSubtreeRelationOnFiniteBinaryTree}

The fragment $  \mathsf{Th}^{ \Sigma }  (  \mathcal{T}  ( \mathcal{L}_{ \mathsf{T}^- }  )  )       $   is undecidable. 

\end{theorem}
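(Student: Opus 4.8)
The plan is to reduce the truth problem for the $\Sigma_{1,0,2}$-fragment of $\mathcal{T}(\mathcal{L}_{\mathsf{T}})$, which we have already shown to be undecidable via Theorem~\ref{FirstUndecidabilityProof}, to the $\Sigma$-theory of $\mathcal{T}(\mathcal{L}_{\mathsf{T}^-})$. Concretely, for an instance of PCP I would produce an $\mathcal{L}_{\mathsf{T}^-}$ $\Sigma$-sentence $\psi$ that holds in $\mathcal{T}(\mathcal{L}_{\mathsf{T}^-})$ iff the original $\phi$ from the proof of Theorem~\ref{FirstUndecidabilityProof} holds in $\mathcal{T}(\mathcal{L}_{\mathsf{T}})$. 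The obstacle is that $\mathcal{L}_{\mathsf{T}^-}$ has only the relation symbol $\sqsubseteq$: there is no $\perp$, no pairing function $\langle\cdot,\cdot\rangle$, and hence no term that names $\alpha$, the constants $0,1$, or the words $a_i,b_i$. So the first block of work is to \emph{define} all the needed scaffolding purely from $\sqsubseteq$, at the cost of extra existentially quantified parameters that get bound by the outermost $\exists$ of $\psi$.

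The key steps in order. First, characterise $\perp$: it is the unique $\sqsubseteq$-minimal element, i.e. $\forall y\,[\,\perp\sqsubseteq y\,]$, equivalently $\forall u\,[\,u\sqsubseteq \perp\rightarrow u=\perp\,]$ — this is an $\mathcal{L}_{\mathsf{T}^-}$-definition of a constant, so I introduce an existential variable $p$ and the conjunct $\forall u\sqsubseteq p\,[\,u=p\,]$; note this costs one of our bounded universal quantifiers, so the quantifier budget must be tracked carefully throughout (the theorem only claims membership in $\mathsf{Th}^{\Sigma}$, not in a fixed $\Sigma_{n,m,k}$, which makes this affordable). Second, realise that $\langle x,y\rangle$ cannot be defined as a function, only as a multivalued pairing relation $\mathsf{Pair}_{\beta,\gamma}(x,y,z)$: using two fresh parameters $\beta,\gamma$ with $p\sqsubset\beta$, $p\sqsubset\gamma$, $\beta\neq\gamma$ as ``tags'', one can say $z$ codes the pair $(x,y)$ by asserting that $z$ has exactly the subtrees forced by $x$ tagged with $\beta$ and $y$ tagged with $\gamma$, where ``tagged'' is itself expressed via the membership relation $s\in_\alpha T$ — and crucially $s\in_\alpha T$, which in $\mathcal{L}_{\mathsf{T}}$ was $\langle s,\alpha\rangle\sqsubseteq T\wedge\alpha\not\sqsubseteq s$, can be rewritten over $\mathcal{L}_{\mathsf{T}^-}$ once one has a $\sqsubseteq$-definition of ``$w$ is the pair of $s$ and $\alpha$''. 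Third, build the multivalued concatenation functions: $\mathsf{Conc}_0(x,y)$ and $\mathsf{Conc}_1(x,y)$ say that $y$ is obtained from $x$ by appending the subtree-pattern coding the letter $0$ (resp.\ $1$), i.e.\ $y$ is a $\sqsubseteq$-extension of $x$ with one extra leaf layer of the right shape and nothing else new; then $\mathsf{Conc}_{a_i}$ is the fixed finite composition of these along the letters of $a_i$, introducing finitely many intermediate existential variables. Fourth, having replaced $\langle L^{\frown}a_i,R^{\frown}b_i\rangle\in T$ by the relational version $\exists u\,v\,w\,[\,\mathsf{Conc}_{a_i}(L,u)\wedge\mathsf{Conc}_{b_i}(R,v)\wedge\mathsf{Pair}_{\beta,\gamma}(u,v,w)\wedge w\in T\,]$ and similarly for the base clause, I push all the new existential quantifiers to the front (they are harmless: $\Sigma$-sentences allow arbitrarily many unbounded $\exists$) and verify the result is still a $\Sigma$-sentence — the two bounded universals on $L,R$ survive, and the additional bounded universal from the definition of $p$ is the only structural change.

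The correctness argument then has two directions. For soundness (``$\psi$ true $\Rightarrow$ PCP has a solution''), I would argue that any witnesses $p,\beta,\gamma,\alpha,T$ in $\mathcal{T}(\mathcal{L}_{\mathsf{T}^-})$ force $p=\perp$, force $\alpha$ to be $\sqsubseteq$-incomparable with all the string-codes (this is exactly why we picked $\alpha\equiv\langle\perp,\perp^2\rangle$ originally, and the relational definition preserves the needed incomparability), and then the $\in$-closure conditions on $T$ reproduce the inductive set $T$ of the proof of Theorem~\ref{FirstUndecidabilityProof}, yielding a solution sequence. For completeness (``PCP has a solution $\Rightarrow$ $\psi$ true''), I take the same witnessing finite set $T$ as in Theorem~\ref{FirstUndecidabilityProof} and the obvious interpretations of $p,\alpha,\beta,\gamma$, and check each conjunct; because the multivalued functions are \emph{total} relations that contain the genuine single-valued operations as a special case, the canonical witnesses satisfy everything. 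I expect the main obstacle to be the bookkeeping in step three: making the $\sqsubseteq$-only definition of $\mathsf{Conc}_0,\mathsf{Conc}_1$ tight enough that $y$ really is forced to be $x$ with \emph{exactly one} letter appended and no spurious extra subtrees — this requires phrasing ``the subtrees of $y$ are precisely those of $x$ together with the following finitely many explicitly $\sqsubseteq$-described new ones'', which is expressible but delicate, and is the place where an off-by-one in the parameter tags would break soundness.

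\begin{proof}

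\end{proof}
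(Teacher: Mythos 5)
Your overall strategy is the paper's: reduce PCP by taking the sentence from the proof of Theorem~\ref{FirstUndecidabilityProof} and replacing the pairing function and concatenation maps by multivalued relations definable from $\sqsubseteq$ alone, with all auxiliary objects (the membership parameter $\alpha$, pair tags $\beta,\gamma$, and a seed $\delta$ playing the role of the empty string) bound by unbounded existential quantifiers up front; defining $\perp$ is unnecessary and the paper never does it. But the step that carries the real weight is missing, and you have misidentified where the difficulty lies. The crux is not making $\mathsf{Conc}_0,\mathsf{Conc}_1$ ``tight enough that $y$ is exactly $x$ with one letter appended'' --- with only $\sqsubseteq$ this is impossible (the relations are unavoidably multivalued) and it is not what is needed. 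What is needed is (i) that the $0$-step and the $1$-step relations are structurally distinguishable and each step strictly enlarges the tree, so that the closure condition inside the finite witness $T$ terminates in a pair with $L=R$; and (ii) a faithfulness argument: if the same tree is reachable from the same seed $\delta$ by two chains of such steps, then the two spelled strings coincide, so that $L=R$ really yields $a_{i_1}\cdots a_{i_m}=b_{i_1}\cdots b_{i_m}$. This is exactly why the paper defines $\mathsf{Zero}(x,y)$ by exhaustively listing the subtrees of $y$ and then builds $\mathsf{One}(x,y)$ from two incomparable $\mathsf{Zero}$-extensions. Your soundness sketch, which appeals to ``reproducing the inductive set $T$ of Theorem~\ref{FirstUndecidabilityProof}'', does not transfer verbatim: there the string codes were canonical terms, so tree equality was literally string equality; here it is not, and without the decoding argument the reduction is unsupported.

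There is also a gap in the $\Sigma$-form bookkeeping. ``Push all the new existential quantifiers to the front'' fails for the quantifiers occurring in the antecedent of the implication under the bounded universals: those occur in negative position, and pulling them out turns them into unbounded universal quantifiers, which are not allowed in $\Sigma$-sentences. The paper handles this by (a) defining $\in_{\alpha}$, $\mathsf{Pair}_{\beta,\gamma}$, $\mathsf{Zero}$, $\mathsf{One}$ using bounded quantifiers only, so that their negations remain $\Sigma$, and (b) adding a third bounded universal variable, quantifying $\forall L,R,S\sqsubseteq T$ and putting $\mathsf{Pair}_{\beta,\gamma}(L,R,S)\wedge S\in_{\alpha}T\wedge L\neq R$ in the antecedent, rather than using an existentially quantified pair code there. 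Your step four needs this (or an equivalent device) spelled out before the resulting sentence can be claimed to be a $\Sigma$-sentence at all.
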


\begin{proof}

Given an instance  $ \langle a_{1}, b_{1}  \rangle  ,\ldots ,  \langle  a_{n}, b_{n}   \rangle  $ of PCP, 
we  compute a $ \mathcal{L}_{ \mathsf{T}^- } $-sentence $ \psi $ that is true in  
$ \mathcal{T}  ( \mathcal{L}_{ \mathsf{T}^- }  ) $ 
if and only if there exists a finite nonempty sequence $i_{1},  \ldots  ,i_{m}$ of indices such that 
$
a_{i_{1}} a_{i_{2}}\ldots  a_{i_{m}} = b_{i_{1}}  b_{i_{2}}   \ldots  b_{i_{m}} 
$.
Our  starting point is to translate the two string operations $ x \mapsto x0 $,  $ \;    x \mapsto x1  $ 
as multivalued functions.
Our translations need to ensure that we can tell the two functions apart.

We capture the string operation  $ x \mapsto x0 $  as a multivalued function as follows
  \begin{multline*}
\mathsf{Zero} (x , y  )  \equiv    \    
\exists z ,   w   \sqsubseteq y    \;   \big[          \   
x  \sqsubseteq   z    \;  \wedge  \;     x  \sqsubseteq     w        \;  \wedge  \;    
z   \not\sqsubseteq w   \;  \wedge  \;         w  \not\sqsubseteq  z   \;  \wedge  \;     
\\
\forall r \sqsubseteq y   \;   [     \    
r = y   \;   \vee   \;    r =  z      \;   \vee   \;    r =  w   \;   \vee   \;      r  \sqsubseteq  x    \     ]   
\      \big]  
\        .
\end{multline*}

Figure \ref{GobalInterpretationZero} shows the subtrees of  $y$ when   $ \mathsf{Zero} (x , y  )  $ holds.
Observe that for each $x$ with  at least two distinct subtrees, we can find $y$ such that $ \mathsf{Zero} (x , y  )  $ holds. 
For example,  if $u $ and $v$ are distinct subtrees of $x $, then  we can let $y$ be   
$ \big\langle  \,   \langle x, u \rangle  ,  \langle x, v \rangle   \,   \big\rangle  $.

\begin{figure}
\centering 

\[
\begin{tikzcd}
   &    y    
   \\
 z   \arrow[ru]  
 &   &   w    \arrow[lu]          
 \\
&   x      \arrow[lu]      \arrow[ru]  
\end{tikzcd}
\]

\caption{ 
Translation of the string operation $ x  \mapsto x0 $ 
in the proof of Theorem  \protect\ref{UndecidabilitSubtreeRelationOnFiniteBinaryTree}.
$ X  \rightarrow  Y $ means   $  X  \sqsubset Y  $.  
If there is no directed path connecting  $X$ and $Y$, then $X$ and $Y$ are incomparable   with respect to   $ \sqsubseteq $.
If $ Y \sqsubseteq y   \;   \wedge  \;   x \sqsubseteq Y $, then $Y$ appears in the diagram. 
}
\label{GobalInterpretationZero}
\end{figure}

We capture the string operation  $ x \mapsto x1 $  as a multivalued function as follows
  \begin{multline*}
\mathsf{One} (x , y  )  \equiv    \    
\exists s ,  t   \sqsubseteq y   \;   \big[          \   
\mathsf{Zero} (x, s )      \    \wedge   \   \mathsf{Zero} (x, t  )     \   \wedge   \  
s   \not\sqsubseteq t   \;  \wedge  \;         t  \not\sqsubseteq  s   \;  \wedge  \;       
\\
\forall   r \sqsubseteq y   \;   [     \ 
r  = y    \;    \vee    \;   r  \sqsubseteq s      \;    \vee    \;   r  \sqsubseteq t    \         ]   
\      \big]  
\           .
\end{multline*}

Figure \ref{GobalInterpretationOne} shows the subtrees of  $y$ when   $ \mathsf{One} (x , y  )  $ holds.
Observe that for each  $x$ with at least three distinct subtrees,   we can find $y$ such that $ \mathsf{One} (x , y  )  $ holds. 
For example, if $ u, v, w $ are three distinct subtrees of $x$, then  we can let $y$ be   
\[
\Big\langle  \;  
 \big\langle  \,   \langle x, u \rangle  ,  \langle x, v \rangle   \,   \big\rangle     
 \;    ,     \;   
  \big\langle  \,   \langle x, u  \rangle  ,  \langle  x, w  \rangle   \,   \big\rangle     
\;    \Big\rangle  
\      . 
 \]

\begin{figure}
\centering 

\[
\begin{tikzcd}
   &   &  &       y    
   \\
   &  s  \arrow[rru]    &      &   &  &    t   \arrow[llu]
   \\
  z   \arrow[ru]  
 &   &  w    \arrow[lu]      
 &  &  u   \arrow[ru]  
 &  &    v    \arrow[lu]        
 \\
   &   &  &     x      \arrow[lllu]      \arrow[lu]      \arrow[ru]      \arrow[rrru]  
\end{tikzcd}
\]

\caption{ 
Translation of the string operation $ x  \mapsto x1 $
 in the proof of Theorem  \protect\ref{UndecidabilitSubtreeRelationOnFiniteBinaryTree}.
$ X  \rightarrow  Y $ means   $  X  \sqsubset Y  $.  
If there is no directed path connecting  $X$ and $Y$, then $X$ and $Y$ are incomparable  with respect to   $ \sqsubseteq $.
If $ Y \sqsubseteq y   \;   \wedge  \;   x \sqsubseteq Y $, then $Y$ appears in the diagram. 
}
\label{GobalInterpretationOne}
\end{figure}
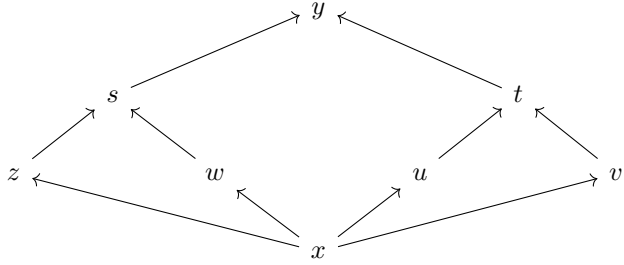

For each binary string $w$, 
we  capture  the string operation $ x \mapsto xw $ as multivalued function by recursion as follows: 
Let    $  \mathsf{Conc}_0 (x, y )   \equiv    \   \mathsf{Zero} (x, y ) $  
and  $  \mathsf{Conc}_1 (x, y )   \equiv    \   \mathsf{One} (x, y ) $.
Let $ w = w_0 w_1  $ where $ w_1 \in \lbrace 0, 1 \rbrace $ and $ w_0  \in \lbrace 0, 1 \rbrace^+ $. 
Let 
\[
\mathsf{Conc}_w (x, y )   \equiv    \  
   \begin{cases}
\exists z  \sqsubseteq y    \;   [    \     \mathsf{Conc}_{ w_0 }   (x,   z   )   
\   \wedge    \  
\mathsf{Zero} (z, y )    
\    ]      
&   \mbox{if  }      w_1 =  0 
\\
\\
\exists z    \sqsubseteq y      \;   [    \     \mathsf{Conc}_{ w_0 }   (x,   z   )   
\   \wedge    \  
\mathsf{One} (z, y )    
\    ]    
&   \mbox{if  }      w_1 =  1
\         .
\end{cases}
\]

What remains is to define a multivalued  pairing function and a notion of set membership. 
We  define the membership relation  as follows 
  \begin{multline*}
x \in_{ \alpha }  y    \equiv    \    
x \not\sqsubseteq \alpha   \   \wedge    \     \alpha  \not\sqsubseteq  x  
 \   \wedge    \  
 \exists  z   \sqsubseteq  y    \;  
 \forall u \sqsubseteq z  \;   [   \   u = z   \   \vee    \  u \sqsubseteq x     \   \vee    \    u  \sqsubseteq  \alpha    \   ]  
\end{multline*}
Observe that if  $ x \in_{ \alpha }  y  $, then the variable $z$ in the formula is such that 
 $ z =  \langle x, \alpha \rangle  $ or $ z = \langle \alpha , x \rangle \ $
 since $ x $ and $ \alpha $ are incomparable with respect to  the subtree relation.

We define a multivalued  pairing function on finite binary trees as follows 
(Figure \ref{GobalInterpretationPairing} shows the subtrees of  $z$ when   $ \mathsf{Pair}_{ \beta , \gamma }  (x , y , z  ) $ holds)
  \begin{multline*}
\mathsf{Pair}_{ \beta , \gamma }  (x , y , z  )  \equiv    \    
\beta \not\sqsubseteq  \gamma   \   \wedge   \     \gamma  \not\sqsubseteq      \beta 
\   \wedge   \   
\\
\exists s,   t,   s_0 ,  t_0    \sqsubseteq z   \;   \Big[          \   
\mathsf{Zero} (x, s )      \    \wedge   \   \mathsf{Zero} (y, t  )     \   \wedge   \  
\\
\beta \not\sqsubseteq   s   \   \wedge   \   s \not\sqsubseteq    \beta  
\   \wedge    \  
\forall r \sqsubseteq s_0   \;  [   \   r = s_0   \  \vee   \  r  \sqsubseteq \beta     \  \vee   \     r  \sqsubseteq   s   \   ]
\   \wedge    \  
\\
\gamma \not\sqsubseteq   t   \   \wedge   \   t \not\sqsubseteq    \gamma  
\   \wedge    \  
\forall r \sqsubseteq t_0   \;  [   \   r = t_0   \  \vee   \  r  \sqsubseteq \gamma     \  \vee   \       r  \sqsubseteq   t   \   ]
\   \wedge    \    
\\
\forall   r \sqsubseteq z   \;   [     \ 
r  = z    \;    \vee    \;   r  \sqsubseteq s_0       \;    \vee    \;   r  \sqsubseteq t_0     \         ]   
\            \Big]  
\      .
\end{multline*}

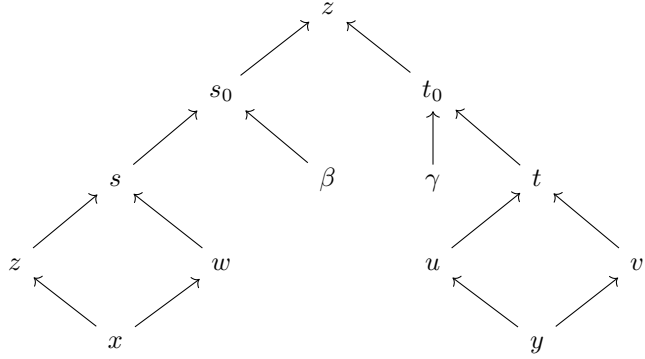
\begin{figure}
\centering 

\[
\begin{tikzcd}
   &   &  &       z   
   \\
   &    &    s_0   \arrow[ru]   &      &     t_0   \arrow[lu]
   \\
   &  s  \arrow[ru]    &      & \beta \arrow [lu]   &   \gamma \arrow[u] &    t   \arrow[lu]
   \\
  z   \arrow[ru]  
 &   &  w    \arrow[lu]      
 &  &  u   \arrow[ru]  
 &  &    v    \arrow[lu]        
 \\
   &      x      \arrow[lu]      \arrow[ru]      
&      &   &  &     y      \arrow[lu]      \arrow[ru]      
\end{tikzcd}
\]

\caption{
The multivalued pairing function  $ \mathsf{Pair}_{ \beta , \gamma }  (x , y , z  )  $
   in the proof of Theorem     \protect\ref{UndecidabilitSubtreeRelationOnFiniteBinaryTree}.
The trees $ \beta ,  \gamma  $ are parameters that allow us to recognize that an element represents a pair. 
$ X  \rightarrow  Y $ means   $  X  \sqsubset Y  $.  
If $ Y \sqsubseteq z   $ and  
$   x \sqsubseteq Y  \;   \vee  \;      y \sqsubseteq Y  \;   \vee  \;    \beta \sqsubseteq Y     \vee  \;    \gamma \sqsubseteq Y  $, 
then $Y$ appears in the diagram. 
}
\label{GobalInterpretationPairing}
\end{figure}

We let $ \psi $ be the following sentence  
\begin{multline*}
\psi \equiv   \  
\exists \alpha  \beta    \gamma   \delta   \;   
\exists  T  \;   \forall L, R, S  \sqsubseteq T   \;    
 \Big[         \    
\bigvee_{ i = 1 }^{ n } 
\exists   x,  y,  z   \sqsubseteq T    \;     \big[     \  
\\
    \mathsf{Conc}_{ a_i  }   (  \delta  , x  )    \    \wedge     \       \mathsf{Conc}_{ b_i  }   (  \delta , y  )  
 \    \wedge     \     
 \mathsf{Pair}_{   \beta ,  \gamma   }    (x, y, z )     
  \    \wedge     \     
  z  \in_{  \alpha  }   T       
    \           \big]
\     \wedge   \   
\\
\Big(   \       
\big(        \       \mathsf{Pair}_{   \beta ,  \gamma   }  (L, R, S )      
\;  \wedge   \;  S   \in_{  \alpha  }   T    \;  \wedge   \;   L \neq R       \           \big)  
\rightarrow
\bigvee_{ i = 1 }^{ n }     
\exists   u,  v ,    w   \sqsubseteq T   \;      \Big[     \  
\\
    \mathsf{Conc}_{ a_i  }   (  L , u  )    \    \wedge     \       \mathsf{Conc}_{ b_i  }   (  R , v  )  
 \    \wedge     \     
 \mathsf{Pair}_{   \beta ,  \gamma   }   (u, v, w )     
  \    \wedge     \     
  w  \in_{  \alpha  }   T          
  \             \Big]   
 \       \Big) 
\        \Big] 
\          . 
\qedhere
\end{multline*}

\end{proof}

\section{Undecidable Fragments III}   
\label{UndecidableFragmentsPartIII}

In this section, 
we show that the existential theory of  $ \mathcal{T} ( \mathcal{L}_{ \mathsf{BT} }  ) $ is undecidable by constructing  an existential interpretation of  $ ( \mathbb{N} , 0,1, +, \times ) $ in  $ \mathcal{T} ( \mathcal{L}_{ \mathsf{BT} }  )  $. 
In Section  \ref{AnalogueOfHilbers10Problem}, we show that this implies that the analogue of Hilbert`s 10th Problem for 
$ \mathcal{T} ( \mathcal{L}_{ \mathsf{BT} }  ) $ is undecidable. 
In Section \ref{UndecidableFragmentsPartIV}, 
we give a direct proof of this result by constructing a many-to-one reduction of Post`s Correspondence Problem. 
The proof builds on the methods we develop in this section.

\subsection{First Basic Lemma}

The first step towards an existential interpretation of 
$ ( \mathbb{N} , 0,1, +, \times ) $ in  $ \mathcal{T} ( \mathcal{L}_{ \mathsf{BT} }  )  $
is to associate the set $ \mathbb{N} $ of natural numbers with an existentially definable class of finite binary trees. 
In Section \ref{UndecidableFragmentsPartI},
 we associated  the set of natural numbers  with  a class $ \mathsf{N} $ of  binary trees   
 by mapping the natural number $n$ to the finite binary tree  $   \perp^{ n+1 }   $.
We can  translate addition  on $ \mathsf{N} $ as follows 
\begin{multline*}
x + y = z \equiv  \  
(   \   x = \perp    \;  \wedge   \;     z = y    \  )  
\   \vee   \  
(      \   
x \neq \perp   \;  \wedge    \;   
z = x [   \,  \perp^2   \,  \mapsto  \,    \langle  y  ,  \perp   \rangle     \,    ]   
\      . 
\end{multline*}
The translation of multiplication is a bit more demanding. 
We develop the tools we need to handle multiplication in Section  \ref{SecondBasicLemma}. 
In this section, we show that  a number of  $ \mathsf{N} $-like classes  of finite binary trees   are existentially definable
in  $ \mathcal{T} ( \mathcal{L}_{ \mathsf{BT} }  )  $.

Given a binary tree $t$, we could have  associated the natural number $n$ with the binary tree $ t^{ n+1 } $.
We can generalize this construction. 
Given  a finite  list $ t_1, \ldots , t_m $ of  binary trees, we can associate the natural number $n$ with 
the binary tree $ \langle   t_1, \ldots , t_m \rangle^{ n+1 } $
(see Section \ref{SecondNotationSystem} for the notation).  
The definition of  $ \langle  t_1, \ldots , t_m \rangle^{ n } $ does  not refer directly to the substitution operator. 
We work with a natural generalization  that uses the substitution operator. 
The construction is so simple that it has a quantifier-free  definition  in $ \mathcal{T} (  \mathcal{L}_{ \mathsf{BT} }   )  $.

To improve readability, it will  occasionally  be more convenient to represent  finite binary trees using 
notation that is closer to their  visual form.

\begin{definition}  \label{ThirdNotationSystemSecondPart}

Let 
\[
\langle  x_1  \rangle  \equiv    \    x_1  
\     \   
\mbox{  and   }  
\       \    
\langle   x_1,   \ldots , x_m , x_{ m+1 }   \rangle          \equiv     \ 
    \langle   \langle   x_1,   \ldots , x_m  \rangle  \,   ,   \,  x_{ m+1 }   \rangle 
\             .
\]

\end{definition}

Recall that $ \mathbf{H} $ denotes the set of all finite full  binary trees.

\begin{definition}    \label{ThirdNotationSystem}

Let $ \alpha  \in \mathbf{H} $. 
Let $ s_1,  \ldots , s_n  \in \mathbf{H} $ be   such that  
$  \alpha  $ is not a subtree of $  s_i $ for all $ i \leq n $  and $ s_n  \neq s_j $ for all $ j < n$.
Let 
\[
 \frac{ 1 }{  \alpha ,   \vec{s}  }     \equiv    \        \langle   \alpha , s_1,  \ldots , s_n   \rangle 
 \                 \   
 \mbox{  and   } 
 \            \
   \frac{ m+ 1 }{  \alpha ,   \vec{s}  }   \equiv     \   
    \frac{  1 }{  \alpha ,   \vec{s}  }     \big[   \,     \alpha   \mapsto      \frac{ m }{  \alpha ,  \vec{s}    }     \,      \big]  
    \                  .
\]
Let 
\[
\mathbb{N}^{ \alpha }_{ \vec{s} } = \lbrace   \frac{ m  }{ \alpha ,   \vec{ s} }   \in \mathbf{H}  :    \      
 m  \in \mathbb{N}   \    \wedge   \    m \geq   1    \       \rbrace 
 \           .
\]

\end{definition}

For example, 
the binary trees 
$  \frac{ 5 }{ \alpha , s  }     \     $, 
$  \;   \frac{ 2 }{  \langle  \alpha ,  \beta \rangle  , s , t   }        \      $,  
$ \;    \frac{ 2 }{ \alpha , r, s, t }    $ 
are drawn   in Figure \ref{ExamplesNumberLikeConstruction}.

\begin{figure}

\begin{center}

\begin{forest}
[   
[
[
[
[
[$ \alpha  $ ]
[$ s $]
]
[$ s $]
]
[$ s $] 
]
[$ s $] 
]
[$ s $]
]
\end{forest}
\qquad
\begin{forest}
[   
[
[
[
[
[$   \alpha  $ ]
[$  \beta   $]
]
[$  s  $]
]
[$ t  $] 
]
[$ s   $] 
]
[$ t  $]
]
\end{forest}
\qquad
\begin{forest}
[   
[
[
[
[
[
[$ \alpha  $]
[$ r   $ ]
]
[$  s  $]
]
[$  t   $]
]
[$  r   $] 
]
[$ s  $] 
]
[$ t   $]
]
\end{forest}

\end{center}
\caption{
Visualization of  the binary trees 
$  \frac{ 5 }{ \alpha , s  }     \  $, 
$  \;   \frac{ 2 }{  \langle  \alpha ,  \beta \rangle  , s , t   }      \      $,  
$ \;    \frac{ 2 }{ \alpha , r, s, t }    $.
}
\label{ExamplesNumberLikeConstruction}
\end{figure}
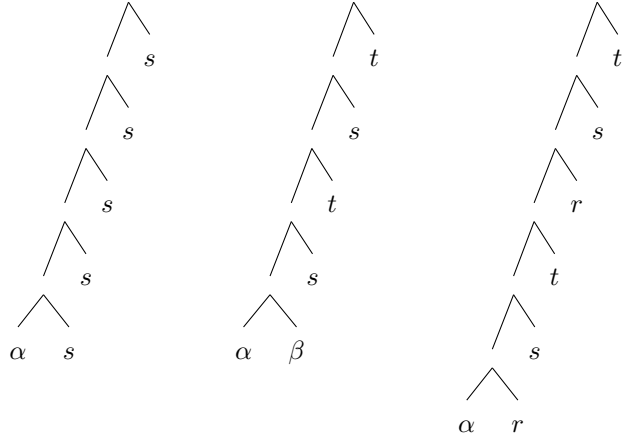

\begin{lemma}   \label{NaturalNumberLikeObjects}

Let $ \alpha  \in \mathbf{H} $. 
Let $ s_1,  \ldots , s_n  \in \mathbf{H} $ be   such that  
$  \alpha   $ is not a subtree of $  s_i $ for all $ i \leq n $  and $ s_n \neq s_j $ for all $ j < n$.
Then, for all $ T \in \mathbf{H} $ 
\begin{multline*}
T  \in  \mathbb{N}^{ \alpha } _{ \vec{s}   } 
\   \Leftrightarrow  \   
T  =  \frac{ 1 }{ \alpha , \vec{s} }
\            \vee       \   
\Big(    \    
\frac{ 2 }{ \alpha , \vec{s} }   \sqsubseteq  T   
 \       \wedge         \
T  =  \frac{ 1 }{ \alpha , \vec{s} }     \Big[   \     \alpha   \mapsto    
   T   \big[  \;    \frac{ 2 }{ \alpha , \vec{s} }        \mapsto  \frac{ 1 }{ \alpha , \vec{s} }      \big]   \      \Big]  
\      \Big)   
\          .
\end{multline*} 

\end{lemma}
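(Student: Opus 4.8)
The plan is to prove the two directions of the biconditional separately, working directly from Definition \ref{ThirdNotationSystem} and the recursive definition of the substitution operator given in Section \ref{Preliminaries}. The key structural fact I will rely on throughout is that, under the hypotheses on $\alpha$ and $\vec{s}$, the tree $\frac{m}{\alpha,\vec{s}}$ contains exactly one occurrence of $\alpha$ as a subtree, namely the ``innermost'' one; this is an easy induction on $m$ using that $\alpha$ is not a subtree of any $s_i$ and that $s_n \neq s_j$ for $j < n$ guarantees $\frac{1}{\alpha,\vec s}$ itself is not $\alpha$ and the trailing component $s_n$ cannot be confused with $\alpha$. Consequently the substitution $\frac{1}{\alpha,\vec s}[\alpha \mapsto u]$ just replaces that single marked occurrence by $u$, so that $\frac{m+1}{\alpha,\vec s} = \frac{1}{\alpha,\vec s}[\alpha \mapsto \frac{m}{\alpha,\vec s}]$ behaves like a stack of $m$ nested copies of the ``frame'' $\langle\alpha,s_1,\dots,s_n\rangle$.

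\textbf{($\Rightarrow$).} Suppose $T \in \mathbb{N}^\alpha_{\vec s}$, say $T = \frac{m}{\alpha,\vec s}$ with $m \geq 1$. If $m = 1$ the left disjunct holds. If $m \geq 2$, I first note $\frac{2}{\alpha,\vec s} \sqsubseteq T$: indeed $\frac{2}{\alpha,\vec s} = \frac{m}{\alpha,\vec s}[\dots]$ is obtained from $T$ by ``peeling off'' the outer $m-2$ frames, and each peeling step exhibits $\frac{2}{\alpha,\vec s}$ as a subtree of the previous stage — formally an induction on $m$. Next I must verify $T = \frac{1}{\alpha,\vec s}\big[\alpha \mapsto T[\frac{2}{\alpha,\vec s} \mapsto \frac{1}{\alpha,\vec s}]\big]$. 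The inner substitution $T[\frac{2}{\alpha,\vec s} \mapsto \frac{1}{\alpha,\vec s}]$ replaces, inside the $m$-fold stack, the unique occurrence of the $2$-fold stack by the $1$-fold stack, yielding $\frac{m-1}{\alpha,\vec s}$; here I need that $\frac{2}{\alpha,\vec s}$ occurs exactly once in $\frac{m}{\alpha,\vec s}$, again by the single-occurrence analysis. Then applying $\frac{1}{\alpha,\vec s}[\alpha \mapsto \frac{m-1}{\alpha,\vec s}]$ gives $\frac{m}{\alpha,\vec s} = T$ by definition of $\frac{m}{\alpha,\vec s}$, so the right disjunct holds.

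\textbf{($\Leftarrow$).} If $T = \frac{1}{\alpha,\vec s}$ then trivially $T \in \mathbb{N}^\alpha_{\vec s}$. Otherwise assume $\frac{2}{\alpha,\vec s} \sqsubseteq T$ and $T = \frac{1}{\alpha,\vec s}\big[\alpha \mapsto T[\frac{2}{\alpha,\vec s} \mapsto \frac{1}{\alpha,\vec s}]\big]$. This is the delicate direction, since a priori $T$ is an arbitrary tree, and I cannot yet assume it is a stack. The strategy is an induction on the size (number of nodes) of $T$. Write $T' := T[\frac{2}{\alpha,\vec s} \mapsto \frac{1}{\alpha,\vec s}]$, so the hypothesis reads $T = \langle \alpha, s_1,\dots,s_n\rangle[\alpha \mapsto T'] = \langle T', s_1,\dots,s_n\rangle$. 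Since $\frac{2}{\alpha,\vec s} = \langle \frac{1}{\alpha,\vec s}, s_1,\dots,s_n\rangle \sqsubseteq T$ and $T$ has the form $\langle T', s_1,\dots,s_n\rangle$, a comparison of outermost structure forces $\frac{2}{\alpha,\vec s} \sqsubseteq T'$ or $\frac{1}{\alpha,\vec s} \sqsubseteq T'$ (the case $\frac{2}{\alpha,\vec s} = T$ is impossible as it would make $T' = \frac{1}{\alpha,\vec s}$, contradicting $\frac{2}{\alpha,\vec s}\sqsubseteq T$ giving $T\neq\frac1{\alpha,\vec s}$ after substitution — this bookkeeping needs care). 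One then shows $T'$ is strictly smaller than $T$ and satisfies the right-hand side of the equivalence with $T'$ in place of $T$ (using that substituting $\frac{1}{\alpha,\vec s}$ for $\frac{2}{\alpha,\vec s}$ commutes appropriately with the outer frame-stripping), or else $T' = \frac{1}{\alpha,\vec s}$ directly; by the induction hypothesis $T' \in \mathbb{N}^\alpha_{\vec s}$, say $T' = \frac{k}{\alpha,\vec s}$, whence $T = \langle T', s_1,\dots,s_n\rangle = \frac{1}{\alpha,\vec s}[\alpha\mapsto \frac{k}{\alpha,\vec s}] = \frac{k+1}{\alpha,\vec s} \in \mathbb{N}^\alpha_{\vec s}$.

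\textbf{Main obstacle.} The hard part is the $(\Leftarrow)$ direction: from the raw equation $T = \frac{1}{\alpha,\vec s}[\alpha \mapsto T']$ one must exclude ``spurious'' solutions where $T$ is not genuinely a stack but accidentally satisfies the equation because $\alpha$ or the patterns $\frac{2}{\alpha,\vec s}$ occur in unexpected places of $T'$. The single-occurrence lemma for $\alpha$ in $\frac{m}{\alpha,\vec s}$ and a careful case analysis of how $\frac{2}{\alpha,\vec s}$ can sit inside $T = \langle T', s_1,\dots,s_n\rangle$ — in particular ruling out that it overlaps the boundary between $T'$ and the trailing $s_i$'s — are what make the induction go through, and this is where essentially all the work lies; the hypotheses $\alpha \not\sqsubseteq s_i$ and $s_n \neq s_j$ for $j<n$ are used precisely to block these overlaps.
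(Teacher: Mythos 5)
Your proposal is correct in outline and takes essentially the same route as the paper: the forward direction by direct computation on the nested stacks, and the backward direction by induction on the size of $T$, writing $T=\langle T',s_1,\ldots,s_n\rangle$ with $T'=T\big[\frac{2}{\alpha,\vec s}\mapsto\frac{1}{\alpha,\vec s}\big]$, using $\alpha\not\sqsubseteq s_i$ and $s_n\neq s_j$ ($j<n$) to exclude occurrences of $\frac{2}{\alpha,\vec s}$ straddling the boundary between $T'$ and the trailing $s_i$'s, and concluding that either $T'=\frac{1}{\alpha,\vec s}$ or $T'$ is strictly smaller and again satisfies the right-hand side, exactly as in the paper's case split. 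One slip to fix: your parenthetical assertion that the case $\frac{2}{\alpha,\vec s}=T$ is impossible is false — it is precisely the situation $T'=\frac{1}{\alpha,\vec s}$ (e.g.\ $T=\frac{2}{\alpha,\vec s}$ itself satisfies the right-hand side), and since your later case split does allow $T'=\frac{1}{\alpha,\vec s}$ directly, the remark should simply be deleted rather than relied upon.
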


\begin{proof}

The left-right implication of the claim  is straightforward. 
Let the size of a binary tree $T  $ be the number of nodes in $T$.
We prove  by induction on the size  of  $T$ that  
 \begin{multline*}
T =  \frac{ 1 }{ \alpha , \vec{s} }
\   \vee   \   
\Big(    \    
\frac{ 2 }{ \alpha , \vec{s} }   \sqsubseteq  T   
 \;   \wedge   \;  
T =  \frac{ 1 }{ \alpha , \vec{s} }     \Big[   \     \alpha   \mapsto    
   T   \big[  \;    \frac{ 2 }{ \alpha , \vec{s} }        \mapsto  \frac{ 1 }{ \alpha , \vec{s} }      \big]     \      \Big]  
\      \Big)   
   \tag{*} 
\end{multline*} 
implies $ T  \in  \mathbb{N}^{ \alpha } _{ \vec{s}   }  $.

Assume $ T $ satisfies (*). 
We need to show that $  T \in  \mathbb{N}^{ \alpha } _{ \vec{s}   }   $. 
If  $   T =  \frac{ 1 }{ \alpha , \vec{s} }  $, 
then certainly $ T   \in   \mathbb{N}^{ \alpha } _{ \vec{s}   }   $. 
Otherwise, 
by the second disjunct in (*),   $ \frac{ 2 }{ \alpha , \vec{s} }    \sqsubseteq  T    $. 
Let 
\[
S =   T   \big[  \;    \frac{ 2 }{ \alpha , \vec{s} }        \    \mapsto     \        \frac{ 1 }{ \alpha , \vec{s} }      \big] 
\       .
\]
Then,  $S$ is strictly smaller  than $T$. 
By the second disjunct in (*)
\[
T =    \frac{ 1 }{ \alpha , \vec{s} }       \big[   \;     \alpha    \,  \mapsto      \,   S     \;        \big]  
\     .  
\]
By Definition  \ref{ThirdNotationSystem},     
$      \      
 \frac{ 1 }{ \alpha , \vec{s} }      =  \langle  \alpha  , s_1, \ldots , s_n  \rangle  
$.
Since $  \alpha  $ is not a subtree of any    $s_i $
\[
\begin{array}{r c l}
T 
& =   &  
\frac{ 1 }{ \alpha , \vec{s} }       \big[   \;     \alpha    \,  \mapsto      \,   S     \;        \big]   
\\
\\
& =  &  
 \langle  \alpha  , s_1, \ldots , s_n  \rangle    \big[   \;     \alpha    \,  \mapsto      \,   S     \;        \big]  
\\
\\
& =   &  
   \langle  S  , s_1, \ldots , s_n   \rangle  
\     .   \tag{**} 
\end{array}
\]

We know that  $    \frac{ 2 }{ \alpha , \vec{s} }       \sqsubseteq  T   $. 
By Definition  \ref{ThirdNotationSystem},    
$   \    
 \frac{ 2 }{ \alpha , \vec{s} }      =   \langle    \alpha   , s_1, \ldots , s_n ,  s_1, \ldots , s_n    \rangle 
$.
Since $s_n \neq s_j $ for all  $ 1 \leq j < n $, 
it follows from $   \frac{ 2 }{ \alpha , \vec{s} }      \sqsubseteq  T   $ and  (**) that  we have one of  the following cases: 
(i)  $  S =    \frac{ 1 }{ \alpha , \vec{s} }       $, 
(ii)  occurrences  of  $  \frac{ 2 }{ \alpha , \vec{s} }     $  in $T$ can only be found in $ S$. 
In case of (ii)
\[
\begin{array}{r c l}
S
&=     &   
 T   \big[  \;    \frac{ 2 }{ \alpha , \vec{s} }        \mapsto  \frac{ 1 }{ \alpha , \vec{s} }      \big]   
 \\
 \\
 &=    &  
 \langle  S, s_1, \ldots , s_n  \rangle        \big[  \;    \frac{ 2 }{ \alpha , \vec{s} }        \mapsto  \frac{ 1 }{ \alpha , \vec{s} }      \big]    
 \\
 \\
 &=  & 
   \big \langle   S   \big[  \;   
    \frac{ 2 }{ \alpha , \vec{s} }        \mapsto  \frac{ 1 }{ \alpha , \vec{s} }      \big]        \;   ,
  s_1, \ldots , s_n       \big\rangle 
  \\
  \\
  &   =  & 
  \langle  \alpha  , s_1, \ldots , s_n  \rangle      \Big[         \     \alpha        \           \mapsto        \    
   S   \big[  \;    \frac{ 2 }{ \alpha , \vec{s} }        \mapsto  \frac{ 1 }{ \alpha , \vec{s} }      \big]   \      \Big]  
  \\
  \\
  &   =  & 
  \frac{ 1 }{ \alpha , \vec{s} }     \Big[   \     \alpha   \mapsto    
   S   \big[  \;    \frac{ 2 }{ \alpha , \vec{s} }        \mapsto  \frac{ 1 }{ \alpha , \vec{s} }      \big]   \      \Big]  
   \                  .
\end{array}
\]

We thus see that in case of either (i) or (ii), $S$ satisfies (*). 
Hence, by the induction hypothesis, $ S  \in   \mathbb{N}^{ \alpha } _{ \vec{s}   }    $. 
It then follows from (**)    that   $ T  \in      \mathbb{N}^{ \alpha } _{ \vec{s}   }   $. 
\end{proof}

\subsection{Concatenation  with the Substitution Operator}
\label{ExistentialInterpretationFreeSemigroupsLengthOperator}

Given a finite alphabet $ A = \lbrace a_1, \ldots , a_n \rbrace $, 
let $ \varepsilon $ denote the empty string and let  $ A^*$ denote  the set of all finite  strings over the alphabet $ A $.
Let  $ ^{ \frown } $ denote the  concatenation operator. 
 For a fixed letter $ 1 \in  A$, the operator 
 $ \vert \cdot  \vert  :  A^*  \to  \lbrace 1 \rbrace^* $
   takes a string and  replaces each letter with $ 1$. 
   For example,  
   $ \vert \varepsilon \vert = \varepsilon $ and   $ \vert  010 \vert = 111 $. 
We refer to $ \vert \cdot  \vert  $ as a $1$-tally length function. 
In this section, we use  Lemma \ref{NaturalNumberLikeObjects}  to give a simple existential interpretation 
of the extended free semigroup    
$ (   A^*, \varepsilon , a_1, \ldots , a_n , ^{ \frown}  , \vert \cdot  \vert  ) $ 
 in  $ \mathcal{T} (  \mathcal{L}_{ \mathsf{BT} }   )  $.
 In 1977,   Makanin \cite{Makanin1977}   showed   that  
$   \mathsf{Th}^{ \exists }    (   A^* , \varepsilon  , a_1, \ldots , a_n , ^{ \frown}   ) $  is decidable. 
But, 
for $n \geq 2 $,  
decidability of  $  \mathsf{Th}^{ \exists }   (   A^* , \varepsilon , a_1, \ldots , a_n , ^{ \frown}  , \vert \cdot  \vert  ) $ 
is a long standing open problem that dates back to the works of   B\"uchi  and Senger     \cite{Senger1988}.

Although decidability of 
$   \mathsf{Th}^{ \exists }    (   A^* , \varepsilon  , a_1, \ldots , a_n , ^{ \frown}  , \vert \cdot \vert   ) $
is an open problem for $ n \geq  2 $, 
Steven Senger showed in his doctoral dissertation that 
$ (   \mathbb{N} , 0, 1, + , \vert ) $  
 is existentially interpretable in 
$ (   A^*, \varepsilon ,  a_1, \ldots , a_n , ^{ \frown}  , \vert \cdot  \vert  ) $ for $ n \geq 2 $
 (see p.~61 of \cite{Senger1982}). 
This shows  that  $ (   A^* , \varepsilon  , a_1, \ldots , a_n , ^{ \frown}  , \vert \cdot \vert   ) $ is quite expressive. 
 It was proved by   Bel’tyukov  \cite{Beltyukov1980}   and Lipshitz \cite{Lipshitz1978}    that the existential theory of 
 $ (   \mathbb{N} , 0, 1, + , \vert ) $  
is decidable. 
The symbol $ \vert $ denotes the divisibility relation on the set of natural numbers.

\begin{theorem}    \label{SecondExistentialInterpretation}

$ (   A^*, \varepsilon ,  a_1, \ldots , a_n , ^{ \frown}  , \vert \cdot  \vert  ) $ 
is $ \exists $-interpretable in 
$ \mathcal{T} (  \mathcal{L}_{ \mathsf{BT} }   )  $.

\end{theorem}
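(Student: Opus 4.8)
The plan is to encode a string over $A=\{a_1,\ldots,a_n\}$ as a left-anchored tagged list, to realize every operation of the extended free semigroup by a substitution term, and to cut out the ill-formed codes with the help of Lemma~\ref{NaturalNumberLikeObjects}. First I would fix $n+2$ pairwise $\sqsubseteq$-incomparable variable-free terms $\alpha,\gamma,\ell_1,\ldots,\ell_n$, i.e.\ no one of them is a subtree of another (e.g.\ take $n+2$ distinct binary trees of the same size). Code the empty string by $\alpha$ and the string $a_{i_1}\cdots a_{i_k}$ by $\langle\alpha,\ell_{i_1},\ldots,\ell_{i_k}\rangle$ (notation of Definition~\ref{ThirdNotationSystemSecondPart}); write $S$ for the set of all these codes. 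Since $\alpha\not\sqsubseteq\ell_i$ and the $\ell_i$ are pairwise incomparable, in a code $\alpha$ occurs only at the bottom-left leaf and each $\ell_i$ occurs only as a right rib, so the operations become quantifier-free substitution terms: $\varepsilon\mapsto\alpha$, $a_i\mapsto\langle\alpha,\ell_i\rangle$, concatenation ${}^{\frown}(x,y)\mapsto y[\alpha\mapsto x]$, and, taking $1=a_1$, the $1$-tally length $\vert x\vert\mapsto x[\ell_2\mapsto\ell_1][\ell_3\mapsto\ell_1]\cdots[\ell_n\mapsto\ell_1]$. A direct computation using the incomparability of the markers shows that on codes $y[\alpha\mapsto x]$ glues the two lists and $x[\ell_2\mapsto\ell_1]\cdots[\ell_n\mapsto\ell_1]$ overwrites every rib by $\ell_1$; in particular these terms send $S$ (resp.\ $S\times S$) into $S$, hence induce operations on $S$, and the obvious reading map $S\to A^{*}$ is a bijection commuting with all of them.

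It remains to define $S$ in $\mathcal{T}(\mathcal{L}_{\mathsf{BT}})$ by an existential formula, and this is where Lemma~\ref{NaturalNumberLikeObjects} enters. Applying to a code $\langle\alpha,\ell_{i_1},\ldots,\ell_{i_k}\rangle$ the substitutions that replace every $\ell_i$ by $\gamma$ produces $\langle\alpha,\gamma,\ldots,\gamma\rangle$ with $k$ copies of $\gamma$, which for $k\geq1$ is $\frac{k}{\alpha,\gamma}\in\mathbb{N}^{\alpha}_{\gamma}$ in the sense of Definition~\ref{ThirdNotationSystem}; and the hypotheses of Lemma~\ref{NaturalNumberLikeObjects} hold, since $\alpha\not\sqsubseteq\gamma$ and the tag list $\gamma$ has length one. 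So I would set
\[
\delta(x)\ :\equiv\ x=\alpha\ \vee\ (\ \gamma\not\sqsubseteq x\ \wedge\ \Delta(\,x[\ell_1\mapsto\gamma][\ell_2\mapsto\gamma]\cdots[\ell_n\mapsto\gamma]\,)\ )
\]
where $\Delta(w)$ is the quantifier-free formula given by Lemma~\ref{NaturalNumberLikeObjects} that defines $\mathbb{N}^{\alpha}_{\gamma}$, namely
\[
\Delta(w)\ :\equiv\ w=\tfrac{1}{\alpha,\gamma}\ \vee\ (\ \tfrac{2}{\alpha,\gamma}\sqsubseteq w\ \wedge\ w=\tfrac{1}{\alpha,\gamma}[\,\alpha\mapsto w[\,\tfrac{2}{\alpha,\gamma}\mapsto\tfrac{1}{\alpha,\gamma}\,]\,]\ ).
\]
Because $\sqsubseteq$ is quantifier-free definable in $\mathcal{T}(\mathcal{L}_{\mathsf{BT}})$ via $u\sqsubseteq v\equiv v[u\mapsto\langle u,u\rangle]\neq v$, the formula $\delta$, like the constant and function formulas of the previous paragraph, is quantifier-free, in particular existential, so clauses (1)--(5) of the definition of $\exists$-interpretability are all accounted for.

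Every element of $S$ clearly satisfies $\delta$. The converse, that $\gamma\not\sqsubseteq x$ together with $x[\ell_1\mapsto\gamma]\cdots[\ell_n\mapsto\gamma]\in\mathbb{N}^{\alpha}_{\gamma}$ forces $x\in S$, is the heart of the argument and is where I expect the real work. I would prove it by induction on the size of $x$, lifting the rigid left-comb shape of the image back to $x$: if $x$ is $\gamma$-free and its image under the substitutions is $\langle\alpha,\gamma,\ldots,\gamma\rangle$, then $x=\langle x_1,x_2\rangle$, and since the markers are pairwise incomparable and $x$ is $\gamma$-free a subtree of $x$ whose image is exactly $\gamma$ must be one of the $\ell_i$ while a subtree whose image is $\alpha$ must be $\alpha$ itself; hence $x_2=\ell_i$ for some $i$ and $x_1$ is $\gamma$-free with image $\langle\alpha,\gamma,\ldots,\gamma\rangle$ of one fewer $\gamma$ (or else $x_1=\alpha$), so the induction hypothesis gives $x_1\in S$ and therefore $x=\langle x_1,\ell_i\rangle\in S$. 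This mirrors the size-induction in the proof of Lemma~\ref{NaturalNumberLikeObjects}, but must track more carefully how the composite substitution interacts with the incomparable markers, and that bookkeeping is the main obstacle. Once it is done, $\delta$ defines exactly $S$, and the structure on $S$ defined by the formulas above is, via the reading map, isomorphic to $(A^{*},\varepsilon,a_1,\ldots,a_n,{}^{\frown},\vert\cdot\vert)$, as clause (5) requires.
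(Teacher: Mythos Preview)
Your proposal is correct and follows essentially the same strategy as the paper: encode strings as left-combs with pairwise incomparable letter-tags, realize concatenation and the tally function by substitution terms, and carve out the domain by collapsing all tags to a single marker and invoking Lemma~\ref{NaturalNumberLikeObjects}, with the converse inclusion proved by induction on tree size. The only cosmetic difference is that the paper collapses all tags to one of the existing letter markers $\mathsf{g}_1$ (so that the target class $\mathbb{N}^{\varepsilon}_{\mathsf{g}_1}\cup\{\varepsilon\}$ is already a subset of the string codes and no guard is needed), whereas you collapse to a fresh marker $\gamma$ and compensate with the guard $\gamma\not\sqsubseteq x$; correspondingly the paper's induction handles one substitution $[\mathsf{g}_i\mapsto\mathsf{g}_1]$ at a time and then composes, while yours treats the full composite directly.
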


\begin{proof}

We need to specify  a function $ \tau :  A^*   \to   \mathbf{H}    $ 
and  existential $ \mathcal{L}_{ \mathsf{BT} }    $-formulas  that describe  a structure
with universe   $ \tau (A^* ) $   and  isomorphic to the structure  
$ (   A^*, \varepsilon ,  a_1, \ldots , a_n , ^{ \frown}  , \vert \cdot  \vert  ) $.
We translate the empty string as follows: 
$ \tau ( \varepsilon )  \equiv  \    \langle  \perp ,   \perp^{ 2  }   \rangle  $. 
We translate each letter $ a_i $ of the alphabet $A$  as follows:  
$ \tau ( a_i )  \equiv    \      \langle  \varepsilon  ,     \mathsf{g}_i    \rangle   $ 
where   $ \mathsf{g}_i \equiv    \     \langle  \perp^{ 3+i }  ,   \perp^{ 3+i }   \rangle $. 
We need  the following property  to prove that $ \tau (A^*) $ is existentially definable 
\begin{itemize}

\item[(*)] $   \mathsf{g}_1  , \ldots , \mathsf{g}_n  $ are incomparable with respect to  the subtree relation.

\end{itemize}

To extend $ \tau $ to all of $ A^* $, we need to translate the concatenation operator. 
We translate concatenation as follows
\[
 x  ^{ \frown }  y = z    \equiv  \   z = y [   \,  \tau (\varepsilon )    \,  \mapsto   \,  x  \,    ]  
 \      .
 \]
 We extend $ \tau $ by recursion  to all of $A^*$ by mapping the string $w_1 \ldots w_k w_{k+1 } $ to the finite binary tree 
$ \tau (w_1  \ldots w_k ) ^{ \frown }    \tau ( w_{k+1} ) $.

Assume $ \vert \cdot \vert $ is the tally-length function that replaces each letter with the letter $ a_k$. 
We translate $ \vert \cdot \vert $ as follows 
\[
\vert x \vert = y  \equiv  \ 
y = x 
[  \,   \mathsf{g}_1    \,  \mapsto  \,   \mathsf{g}_k   \,  ]   \;  
[  \,   \mathsf{g}_2    \,  \mapsto  \,   \mathsf{g}_k   \,  ]    \;  
 \ldots 
 [  \,   \mathsf{g}_n    \,  \mapsto  \,    \mathsf{g}_k    \,  ]  
 \         .
\]

All that remains is to show that  $ \tau ( A^* ) $  is existentially definable. 
Lemma \ref{NaturalNumberLikeObjects} tells us that    the classes  
$  \mathbb{N}^{ \varepsilon }_{ \mathsf{g}_i   }   \cup \lbrace \varepsilon  \rbrace    $
are existentially definable   in   $ \mathcal{T} (  \mathcal{L}_{ \mathsf{BT} }   )  $. 
The idea is to show that  $ s \in  \tau ( A^* ) $ if and only if we can transform $s$ into an element  of  
$  \mathbb{N}^{ \varepsilon }_{ \mathsf{g}_i   }   \cup \lbrace \varepsilon  \rbrace       $. 
We show that $ \tau ( A^* ) $ is defined by the following   formula 
\[
 \phi (x)   \equiv   \    
 x [  \,  \mathsf{g}_2   \,  \mapsto \,  \mathsf{g}_1  \,  ]   \ldots 
 [  \,  \mathsf{g}_{n}    \,  \mapsto \,  \mathsf{g}_1  \,  ]  
\in 
 \mathbb{N}^{ \varepsilon }_{ \mathsf{g}_1   }   \cup \lbrace \varepsilon  \rbrace     
\         .
\]

Clearly, each element in $ \tau ( A^* ) $ has the property   $ \phi (x) $. 
To see that the converse holds, assume $ \phi (s) $. 
We need to show that $ s  \in  \tau ( A^* ) $.
Since $  \mathbb{N}^{ \varepsilon }_{ \mathsf{g}_1   }   \cup \lbrace \varepsilon  \rbrace       \subseteq \tau (A^*) $, 
it suffices to show that for each $ 1 \leq i \leq n $  and each finite binary tree $t$ 
\[
t [  \,  \mathsf{g}_i   \,  \mapsto \,  \mathsf{g}_1  \,  ]  \in    \tau (A^*)  
\     \Rightarrow    \   
t \in    \tau (A^*)  
\        .
\tag{**} 
\]
We prove (**) by induction on the size of $t$. 
Assume  $ t [  \,  \mathsf{g}_i   \,  \mapsto \,  \mathsf{g}_1  \,  ]  \in    \tau (A^*)   $. 
We need to show that  $ t \in    \tau (A^*)   $. 
If $  \mathsf{g}_i  $ is not a subtree of $t$, then 
\[
 t  =  t  [  \,  \mathsf{g}_i   \,  \mapsto \,  \mathsf{g}_1  \,  ]  \in    \tau (A^*)   
 \      .
 \]
Assume now $  \mathsf{g}_i  $ is a subtree of $t$. 
Let $ t = \langle t_0, t_1 \rangle $.
We cannot have $ t =  \mathsf{g}_i  $  since $  \mathsf{g}_1  \not\in   \tau (A^* ) $. 
Hence
\[
t  [  \,  \mathsf{g}_i   \,  \mapsto \,  \mathsf{g}_1  \,  ] 
= 
\big\langle 
t_0  [  \,  \mathsf{g}_i   \,  \mapsto \,  \mathsf{g}_1  \,  ]     \,  ,   \,  
t_1  [  \,  \mathsf{g}_i   \,  \mapsto \,  \mathsf{g}_1  \,  ]   
 \big\rangle 
\         .
\]
By how the elements of $ \tau (A^* ) $ are defined
\[
  t_0  [  \,  \mathsf{g}_i   \,  \mapsto \,  \mathsf{g}_1  \,  ]  \in \tau (A^* ) 
  \    \mbox{  and   }     \  
  t_1  [  \,  \mathsf{g}_i   \,  \mapsto \,  \mathsf{g}_1  \,  ]   = \mathsf{g}_j 
  \    \mbox{  for some  }   1 \leq j \leq n 
  \         .
  \]
  Since  $  t_0  [  \,  \mathsf{g}_i   \,  \mapsto \,  \mathsf{g}_1  \,  ]  \in \tau (A^* )  $, 
  by the induction hypothesis, $ t_0 \in \tau (A^*) $. 
If $ \mathsf{g}_i  $ is not a subtree of $ t_1 $, then 
$ t_1 =  t_1  [  \,  \mathsf{g}_i   \,  \mapsto \,  \mathsf{g}_1  \,  ]   = \mathsf{g}_j $. 
Assume now  $ \mathsf{g}_i  $ is  a subtree of $ t_1 $. 
Then,   $ \mathsf{g}_1 $ is a subtree of  $ \mathsf{g}_j $ 
since  $  t_1  [  \,  \mathsf{g}_i   \,  \mapsto \,  \mathsf{g}_1  \,  ]   = \mathsf{g}_j $. 
By (*),  $  \mathsf{g}_1  =  \mathsf{g}_j  $, which implies $ t_1 =   \mathsf{g}_i  $. 
Hence,  $ t_0 \in \tau (A^* ) $ and $ t_1 = \mathsf{g}_l $ for some $ 1 \leq l \leq n $. 
Then,  $ t = \langle t_0 , t_1 \rangle \in \tau (A^*) $ by how the elements of $ \tau (A^*) $  are defined. 
Thus, by induction, (**) holds for all $ 1 \leq i \leq n $ and all finite binary trees $t$.
\end{proof}

We have not been able to determine whether the converse of the preceding theorem holds, 
which would say something about the complexity of deciding truth of existential sentences in  
$ (   A^*, \varepsilon , a_1, \ldots , a_n , ^{ \frown}  , \vert \cdot  \vert  ) $.
Since we show in Section \ref{UndecidabilityExistentialTheorySubstitutionOperator} 
 that the existential theory of  $ \mathcal{T} (  \mathcal{L}_{ \mathsf{BT} }   )  $   is undecidable, 
 a positive solution to this problem would imply undecidability of the existential theory of 
$ (   A^*, \varepsilon ,  a_1, \ldots , a_n , ^{ \frown}  , \vert \cdot  \vert  )   \;   $.

\begin{open problem}    \label{OpenProblemExistentialInterpretabilityFreeSemigroupLengthOperator}

Let $ n \geq 2 $. 
Is   $ \mathcal{T} (  \mathcal{L}_{ \mathsf{BT} }   )  $   $ \exists $-interpretable in     the extended free semigroup
$ (   A^*, \varepsilon ,  a_1, \ldots , a_n , ^{ \frown}  , \vert \cdot  \vert  )          \;       $?

\end{open problem}

\subsection{Second Basic Lemma}
\label{SecondBasicLemma}

In the preceding section, 
we saw how   the classes $ \mathbb{N}^{ \alpha }_{ \vec{s} }   $  can be used  to existentially interpret 
finitely generated free semigroups extended with a tally-length function. 
The classes $ \mathbb{N}^{ \alpha }_{ \vec{s} }  $ were used in finding an existentially definable domain while the substitution operator was used to show that concatenation and the tally-length operator are existentially definable on this domain. 
When we existentially interpret  $ (  \mathbb{N} , 0, 1,  + \times ) $ in the next section, 
 $ \mathbb{N}^{ \alpha }_{ \vec{s} }   $ is used to find an existentially definable domain while the substitution operator is used to show that addition is existentially definable on this domain. 
In this section, we develop the  tools   that   will  allow us  to show that multiplication is existentially definable. 
The classes $ \mathbb{N}^{ \alpha }_{ \vec{s} } $ on their own are not sufficient to  show that multiplication is existentially definable
since elements  of   the classes $ \mathbb{N}^{ \alpha }_{ \vec{s} }  $  have a  simple repetitive  structure. 
We need to show that  classes of finite binary trees with a bit more complex   structure  are existentially definable.

We are interested in describing the relation  $ x \times y = z $,  on the set of natural numbers, 
by   existential  $ \mathcal{L}_{ \mathsf{BT} }    $-formulas.  
Our approach is to characterize $   x \times y = z  $ in terms of the computation tree of $ x \times y $. 
We know that given  three natural numbers $n, k, m > 0 $, 
the equality $ n \times k = m $ holds if and only if there exists a sequence    of pairs of natural numbers 
\[
( k_1 , m_1 )  ,   \   ( k_2, m_2 )  ,   \    \ldots ,   ( k_r , m_r   ) 
  \tag{*}  
\]
 such that 
  \begin{itemize}

 \item[-]    $ k_{ 1 } =  1 $ and $ m_1 =   n  $

 \item[-] given $ 1 \leq i < r $,      if $ k_{i} = j_0    $ and  $ m_{i} =  j_1     $, 
 then   $ k_{i +1 } = j_0 +1    $    and  $ m_{i +1 } = j_1 + n     $

 \item[-]  $ k_r =  k $  and    $       m_r =  m      \;   $.

 \end{itemize}

 We start by characterizing the existence of (*) in terms of finite binary trees.
 For technical reasons which  have to do with the proof of Lemma \ref{MultiplicationLemma},   
 we work with two distinct  representations of natural numbers. 
 We use one representation to encode numbers in the sequence $ k_1, \ldots , k_r $,  
 and we use another representation to encode numbers in the sequence $ m_1, \ldots , m_r $. 
 Let $ \alpha , \beta  $ be finite binary trees that are incomparable with respect to  the subtree relation. 
Let $ s  $ and  $ t  $ 
be such that $ \alpha $ is a substree of  neither $  s $  nor $  t $, 
and $ \beta $ is  a  subtree of  neither $  s $  nor $  t  $.
We have   the following two ways of  associating natural  numbers with finite binary trees
(see Definition \ref{ThirdNotationSystem}): 
\begin{itemize}

\item[(A)]   We map the natural $n$ to the binary tree $ \frac{ n }{ \alpha , s }  $  defined by recursion as follows  
\[
\frac{ 0 }{ \alpha , s }  \equiv   \    \alpha   
   \      \   
\mbox{   and    }  
  \          \
\frac{ n+1  }{ \alpha , s }  \equiv   \   \langle \alpha ,  s  \rangle   \big[    \alpha  \mapsto   \frac{ n }{ \alpha , s }     \big] 
\                  .
\]

\item[(B)]     We map the natural $n$ to the binary tree $ \frac{ n }{ \beta  , t }  $  defined by recursion as follows  
\[
\frac{ 0 }{ \beta , t }  \equiv   \    \beta   
   \      \   
\mbox{   and    }  
  \          \
\frac{ n+1  }{ \beta , t }  \equiv   \   \langle \beta ,  t  \rangle   \big[    \beta  \mapsto   \frac{ n }{ \beta , t }     \big] 
\                  .
\]

\end{itemize}

We use (A) to represent the sequence  $ k_1, \ldots , k_r $, 
and we use (B) to represent the sequence  $ m_1, \ldots , m_r $. 
We  can now  characterize the existence of (*) in terms of finite binary trees. 
Given  three natural numbers $n, k, m > 0 $, 
the equality $ n \times k = m $ holds if and only if there exists a sequence    of  pairs of finite binary trees 
\[
\langle  u _1, v_1  \rangle   ,   \   \langle  u_2, v_2  \rangle   ,   \    \ldots ,   \langle  u_r , v_r   \rangle  
  \tag{**}  
\]
 such that 
  \begin{itemize}

 \item[-]    $ u_{ 1 } =  \frac{ 1 }{ \alpha , s }   $ and $ v_1 = \frac{ n  }{ \beta , t }       $

 \item[-] given $ 1 \leq i < r $,  
 if $ u_{i} =    \frac{ j_0 }{ \alpha , s }       $    and  $ v_{i} = \frac{ j_1 }{ \beta , t }           $, 
 then   $ u_{i +1 } = \frac{ j_0  +1  }{ \alpha , s }        $    and  $ v_{i +1 } =   \frac{ j_1 +n  }{ \beta , t }      $

 \item[-]  $ u_r =   \frac{ k  }{ \alpha , s }   $  and    $       v_r =  \frac{ m  }{ \beta , t }     \;   $.

 \end{itemize}

The next step is to associate (**) with a finite binary tree. 
Let $ \gamma $ be a finite binary tree  that is such that $ \alpha ,  \beta , \gamma $ are incomparable with respect to  the subtree relation. 
Using    the notion of Definition  \ref{ThirdNotationSystemSecondPart}, 
we associate (**) with the finite binary tree 
\[
\big\langle      \;   
  \gamma  \, ,   \,   \langle u_1, v_1 \rangle    \, ,   \,   \langle u_2, v_2 \rangle    \, ,   \,  
\ldots    
 \, ,   \,     \langle u_r, v_r \rangle     
  \     \big\rangle 
 \        .
 \tag{***}
\]
For example, the left tree in Figure \ref{MultiplicationFirstFigure} represents $ 5 \times 3 = 15 $
and the right tree in Figure \ref{MultiplicationFirstFigure}  represents $ 50 \times 3  = 150 $.

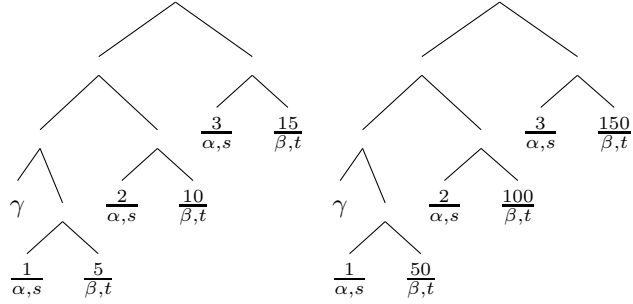
\begin{figure}

\begin{center}

\begin{forest}
[
[
[
[$  \gamma  $ ]
   [  [$  \frac{ 1 }{ \alpha , s }  $ ]     [ $  \frac{ 5 }{ \beta  , t  }      $   ]   ]
]
[   [$  \frac{ 2 }{ \alpha , s }    $ ]     [ $     \frac{ 10 }{ \beta  , t  }       $   ]     ]
]
[   [$  \frac{ 3 }{ \alpha , s }   $ ]     [ $   \frac{ 15 }{ \beta  , t  }          $   ]    ] 
]
\end{forest}
\begin{forest}
[
[
[
[$  \gamma  $ ]
   [  [$     \frac{ 1 }{ \alpha , s }     $ ]     [ $    \frac{ 50 }{ \beta  , t  }    $   ]   ]
]
[   [$      \frac{ 2 }{ \alpha , s }       $ ]     [ $      \frac{ 100 }{ \beta  , t  }     $   ]     ]
]
[   [$     \frac{ 3 }{ \alpha , s }        $ ]     [ $      \frac{ 150 }{ \beta  , t  }    $   ]    ] 
]
\end{forest}

\end{center}
\caption{
Encoding $ 5 \times 3 = 15 $ and  $ 50  \times 3 = 150 $  with finite binary trees.
}
\label{MultiplicationFirstFigure}
\end{figure}

Now that we have a way of associating $ n \times k = m $ with a particular  finite binary tree $ T(n, k, m ) $, 
we need to find an existentially definable class that contains  $ T(n, k, m ) $  and   is such that it is easy to characterize 
$ T(n, k, m )  $ in terms of representations of $n, k, m $. 
Our approach  is the following: 
Given a fixed natural number $ n  >0  $,   let 
$  \mathbb{M}^{ \alpha , \beta , \gamma }_{ s, t, n }   $
denote the class of all  finite binary trees  $S$   of the form (***), 
i.e., $S$ encodes  the computation tree of   $ n \times k  $ for some  $ k > 0 $.
After giving a formal definition of   $  \mathbb{M}^{ \alpha , \beta , \gamma }_{ s, t, n }   $, 
we prove that $   \mathbb{M}^{ \alpha , \beta , \gamma }_{ s, t, n }    $  is existentially definable.
That  $    \mathbb{M}^{ \alpha , \beta , \gamma }_{ s, t, n }  $
  is existentially definable means that we can associate it with some  existential   $ \mathcal{L}_{ \mathsf{BT}   }   $-formula 
$ \psi (x, \alpha , \beta ,  \gamma ,  s, t ,  \frac{ n }{ \beta , t }   ) $, 
where $x$ is the defining variable and  $ \alpha , \beta  ,  \gamma , s, t  ,  \frac{ n }{ \beta , t }  $ are parameters.

\begin{definition}

Let $ \alpha , \beta , \gamma   \in \mathbf{H} $ be  incomparable with respect to  the subtree relation. 
Let $ s , t \in \mathbf{H} $ be such that  $ \alpha $ is  a substree of  neither $  s  $  nor  $  t  $ 
and $ \beta $ is a subtree of neither $  s  $  nor  $  t  $.
Let $ n \geq 1 $.
Let  
 $  \mathbb{M}^{ \alpha , \beta , \gamma }_{ s, t, n }   $ 
denote  the smallest class of finite full binary trees that satisfies  the following  
\begin{itemize}

\item  $ 
 \big\langle   \gamma    \,   ,   \,     \langle  \frac{ 1 }{ \alpha , s }  ,   \frac{ n }{ \beta , t }   \rangle   \,       \big\rangle   
 \in  
 \mathbb{M}^{ \alpha , \beta , \gamma }_{ s, t, n }   $

\item   if  $  T    \in    \mathbb{M}^{ \alpha , \beta , \gamma }_{ s, t, n }  $  
where 
$ T =  \big\langle   R     \,   ,   \,   \langle   \frac{ k }{ \alpha , s }    \,  ,  \,      \frac{ m }{ \beta  , t }   \rangle   \,   \big\rangle  $, 
then 
 $  \big\langle   T    \,   ,   \,   \langle  \frac{ k +1  }{ \alpha , s }    \,  ,  \,      \frac{ m + n  }{ \beta  , t }     \rangle   \,     \big\rangle   
   \in    \mathbb{M}^{ \alpha , \beta , \gamma }_{ s, t, n }    \;   $.

\end{itemize}

\end{definition}

To improve readability, we introduce  the following abbreviation 
\[
t [ r_1 \, \mapsto \,  s_1 ,  r_2 \, \mapsto \,  s_2 ,  \ldots ,  r_k \, \mapsto \,  s_k  ]  
\equiv    \   
t [ r_1 \, \mapsto \,  s_1 ]  [ r_2 \, \mapsto \,  s_2 ]   \ldots [ r_k  \, \mapsto \,  s_k ] 
\     . 
\]

\begin{lemma}    \label{MultiplicationLemma}

Let $ \alpha , \beta , \gamma \in \mathbf{H} $ be  incomparable with respect to  the subtree relation. 
Let $ s , t \in \mathbf{H} $ be such that  $ \alpha $ is  a substree of  neither $  s  $  nor  $  t  $ 
and $ \beta $ is a subtree of neither $  s  $  nor  $  t  $.
Let $ n \geq 1 $. 
Let $T  \in \mathbf{H}$. 
Then, 
$ T \in   \mathbb{M}^{ \alpha , \beta , \gamma }_{ s, t, n }      $ if and only if 
\begin{itemize}

\item[\textup{(1) } ]  $  
   \big\langle   \gamma    \,   ,   \,     \langle  \frac{ 1 }{ \alpha , s }  ,   \frac{ n }{ \beta , t }   \rangle   \,      \big\rangle    
   \sqsubseteq T  $

\item[\textup{(2) } ]   there exist 
$  L  \in    \mathbb{N}^{ \alpha }_{ s }  \cup \lbrace \alpha \rbrace   $ and  
$ R  \in   \mathbb{N}^{ \beta  }_{ t }     \cup \lbrace \beta   \rbrace     $ 
such that 
\[
 T   = 
 \Big\langle   T      \Big[      
  \big\langle   \gamma    \,   ,   \,     \langle  \frac{ 1 }{ \alpha , s }  ,   \frac{ n }{ \beta , t }   \rangle   \,      \big\rangle  
      \,  \mapsto  \,   
       \gamma   
 \   ,      \   
 \frac{ 1 }{ \alpha , s }    \,  \mapsto   \,   \alpha    
 \   ,      \   
\frac{ n }{ \beta , t }     \,  \mapsto   \,   \beta    
\Big]   
 \   ,      \   
\langle L, R   \rangle     \;      \Big\rangle       
\               .
\]

\end{itemize}

\end{lemma}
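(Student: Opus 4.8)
The plan is to prove both directions by stripping one layer off the recursive definition of $\mathbb{M}^{\alpha,\beta,\gamma}_{s,t,n}$ at a time. First I would record two elementary facts about the representations of Definition \ref{ThirdNotationSystem}, both consequences of $\alpha\not\sqsubseteq s,t$ and $\beta\not\sqsubseteq s,t$: (i) $\frac{i+1}{\alpha,s}=\langle\frac{i}{\alpha,s},s\rangle$, the tree $\frac{1}{\alpha,s}$ occurs in $\frac{i}{\alpha,s}$ exactly once, namely at the foot of its left spine, and hence $\frac{i}{\alpha,s}[\frac{1}{\alpha,s}\mapsto\alpha]=\frac{i-1}{\alpha,s}$ for every $i\ge 1$, writing $\frac{0}{\alpha,s}$ for $\alpha$; and (ii) the same statements with $\beta,t$ in place of $\alpha,s$, so in particular $\frac{jn}{\beta,t}[\frac{n}{\beta,t}\mapsto\beta]=\frac{(j-1)n}{\beta,t}$ for $j\ge 1$. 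Using that $\alpha,\beta,\gamma$ are pairwise incomparable and that $\alpha,\beta$ occur in neither $s$ nor $t$, I would also note the localization facts that $\gamma$, $\frac{1}{\alpha,s}$ and $\frac{n}{\beta,t}$ each occur in a tree of the form (***) only in the obvious places, so that the three substitutions appearing in clause (2) act independently on the components of such a tree: $[\langle\gamma,\langle\frac{1}{\alpha,s},\frac{n}{\beta,t}\rangle\rangle\mapsto\gamma]$ deletes the foot pair of (***) and relabels it $\gamma$, while $[\frac{1}{\alpha,s}\mapsto\alpha]$ lowers every first coordinate $\frac{i}{\alpha,s}$ to $\frac{i-1}{\alpha,s}$ and $[\frac{n}{\beta,t}\mapsto\beta]$ lowers every second coordinate $\frac{jn}{\beta,t}$ to $\frac{(j-1)n}{\beta,t}$.

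For the left-to-right implication I would induct on the $k$ for which $T$ encodes $n\times k$. Clause (1) is immediate, since $\langle\gamma,\langle\frac{1}{\alpha,s},\frac{n}{\beta,t}\rangle\rangle$ is by construction the foot of the left spine of $T$. For clause (2), write $T=\langle T',\langle\frac{k}{\alpha,s},\frac{kn}{\beta,t}\rangle\rangle$, where $T'=\gamma$ if $k=1$ and $T'$ is the immediate predecessor of $T$ in $\mathbb{M}^{\alpha,\beta,\gamma}_{s,t,n}$ otherwise. Applying the three substitutions in order and using (i), (ii) and the localization facts, the foot pair disappears and the remaining pairs have both coordinates shifted down by one layer; after re-indexing this is precisely $T'$. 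Hence $T=\langle\, T[\langle\gamma,\langle\frac{1}{\alpha,s},\frac{n}{\beta,t}\rangle\rangle\mapsto\gamma,\ \frac{1}{\alpha,s}\mapsto\alpha,\ \frac{n}{\beta,t}\mapsto\beta]\,,\ \langle L,R\rangle\,\rangle$ with $L=\frac{k}{\alpha,s}\in\mathbb{N}^{\alpha}_{s}$ and $R=\frac{kn}{\beta,t}\in\mathbb{N}^{\beta}_{t}$, which gives (2).

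For the converse I would induct on the number of nodes of $T$. Assume $T$ satisfies (1) and (2), and let $T''$ denote the triple substitution occurring in (2), so $T=\langle T'',\langle L,R\rangle\rangle$ and $T''$ is strictly smaller than $T$. The work is to show that (1), together with the shape of the equation in (2), forces $T$ to be a genuine encoding of the form (***): from (1) the pattern $\langle\gamma,\langle\frac{1}{\alpha,s},\frac{n}{\beta,t}\rangle\rangle$ occurs in $T$, and because $\gamma$ is incomparable with $\alpha$ and $\beta$ it can sit only at the foot of the left spine of a tree that also satisfies (2); tracking which occurrences of the three substituted patterns can have been affected, one recovers that $T$ arises from $T''$ by appending one more pair, that $L$ and $R$ are forced to be the successors $\frac{k+1}{\alpha,s}$ and $\frac{(k+1)n}{\beta,t}$ of the last entries of $T''$, and that either $T''=\gamma$ (the base case, which pins $T=\langle\gamma,\langle\frac{1}{\alpha,s},\frac{n}{\beta,t}\rangle\rangle$) or $T''$ again satisfies (1) and (2). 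In the latter case the induction hypothesis gives $T''\in\mathbb{M}^{\alpha,\beta,\gamma}_{s,t,n}$, and then the closure clause of the definition yields $T\in\mathbb{M}^{\alpha,\beta,\gamma}_{s,t,n}$.

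The main obstacle is exactly this last step. Clause (2) only posits the existence of some $L\in\mathbb{N}^{\alpha}_{s}\cup\lbrace\alpha\rbrace$ and $R\in\mathbb{N}^{\beta}_{t}\cup\lbrace\beta\rbrace$ satisfying the displayed identity, so the backward direction must exclude spurious trees $T$ and spurious witnesses $L,R$ — in particular it must show that the extra slack $\lbrace\alpha\rbrace$ and $\lbrace\beta\rbrace$ does no harm. This is where the full force of the incomparability of $\alpha,\beta,\gamma$ and of $\alpha,\beta\not\sqsubseteq s,t$ enters: it is what lets one pin down, uniquely, where each of the three substituted patterns can occur in $T$, and hence reconstruct the left-spine list structure of (***) one pair at a time. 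The forward direction, by contrast, is a routine if somewhat lengthy substitution computation resting on (i) and (ii).
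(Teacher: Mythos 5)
Your proposal is correct and follows essentially the same route as the paper's proof: the left-to-right direction is the same routine substitution computation (which the paper dismisses as obvious), and the right-to-left direction is the same induction on the size of $T$, using the non-occurrence facts supplied by incomparability (the paper's properties (A)--(C)) to distribute the triple substitution over $T=\langle T'',\langle L,R\rangle\rangle$, treat the base case $T''=\gamma$, and finish with the induction hypothesis plus the closure clause of the definition of $\mathbb{M}^{\alpha,\beta,\gamma}_{s,t,n}$. One small reordering worth noting: the fact that $L$ and $R$ are forced to be the successors of the last entries of $T''$ does not come from occurrence-tracking alone but only after the induction hypothesis, since it is membership of $T''$ in $\mathbb{M}^{\alpha,\beta,\gamma}_{s,t,n}$ that pins its last pair to the form $\big\langle \frac{j}{\alpha,s},\frac{jn}{\beta,t}\big\rangle$ and thereby excludes spurious witnesses such as $L=\alpha$ or an ill-aligned $R$.
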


Before we prove the lemma, we illustrate   with an example how  (1)-(2) work. 
Assume $T$ is the right tree   in Figure \ref{MultiplicationFirstFigure}, 
 which encodes the computation tree of $ 50 \times 3 $. 
So 
\[
T = \Big\langle         \    
 \gamma , \;   
  \big\langle \frac{ 1 }{ \alpha , s}   \, ,   \,   \frac{ 50 }{ \beta , t}     \big\rangle     ,   \;  
    \big\langle \frac{ 2 }{ \alpha , s}   \, ,   \,   \frac{ 100 }{ \beta , t}     \big\rangle     ,   \;  
  \big\langle \frac{ 3  }{ \alpha , s}   \, ,   \,   \frac{  150 }{ \beta , t}     \big\rangle    
\       \Big\rangle 
\                   . 
\]
Since  $ \big\langle \frac{ 1 }{ \alpha , s}   \, ,   \,   \frac{ 50 }{ \beta , t}     \big\rangle   $ occurs only at the bottom of $T$, 
we have 
\[
\begin{array}{r c l }
T_0   
& := & 
T   \Big[          \   
  \big\langle   \gamma    \,   ,   \,     \langle  \frac{ 1 }{ \alpha , s }  ,   \frac{ n }{ \beta , t }   \rangle   \,      \big\rangle  
      \,  \mapsto  \,   
       \gamma   
       \      \Big] 
       \\
       \\
       & = & 
\Big\langle    
 \gamma , \;   
    \big\langle \frac{ 2 }{ \alpha , s}   \, ,   \,   \frac{ 100 }{ \beta , t}     \big\rangle     ,   \;  
  \big\langle \frac{ 3  }{ \alpha , s}   \, ,   \,   \frac{  150 }{ \beta , t}     \big\rangle    
\       \Big\rangle        
 \                   .
\end{array}
\]
The binary tree  $  \frac{ 1 }{ \alpha , s }  $ occurs in $T_0 $ only at  the bottom  of 
$  \frac{ 2 }{ \alpha , s }  $ and $  \frac{ 3 }{ \alpha , s }  $.
Hence 
\[
\begin{array}{r c l }
T_1   
& := & 
T_0     \Big[       \    \frac{ 1 }{ \alpha , s }    \,  \mapsto   \,   \alpha       \     \Big] 
       \\
       \\
       & = & 
\Big\langle    
 \gamma , \;   
    \big\langle \frac{ 1 }{ \alpha , s}   \, ,   \,   \frac{ 100 }{ \beta , t}     \big\rangle     ,   \;  
  \big\langle \frac{ 2  }{ \alpha , s}   \, ,   \,   \frac{  150 }{ \beta , t}     \big\rangle    
\       \Big\rangle        
 \                   .
\end{array}
\]
The binary tree  $  \frac{ 50 }{ \beta , t }  $ occurs in $T_1 $ only at the bottom of 
$  \frac{ 100 }{ \beta , s }  $ and $  \frac{ 150  }{ \beta , t }  $.
Hence 
\[
\begin{array}{r c l }
T_2   
& := & 
T_1      \Big[       \   \frac{ 50  }{ \beta , t }     \,  \mapsto   \,   \beta        \     \Big] 
       \\
       \\
       & = & 
\Big\langle    
 \gamma , \;   
    \big\langle \frac{ 1 }{ \alpha , s}   \, ,   \,   \frac{ 50 }{ \beta , t}     \big\rangle     ,   \;  
  \big\langle \frac{ 2  }{ \alpha , s}   \, ,   \,   \frac{  100 }{ \beta , t}     \big\rangle   
\       \Big\rangle        
 \                   .
\end{array}
\]

\begin{proof}[Proof of Lemma \ref{MultiplicationLemma}]

The only if part is obvious. 
We prove  the if part   by induction on the size of $T$. 
We need the following properties: 
\begin{itemize}

\item[(A)]   Since  $ \alpha , \beta , \gamma $ are incomparable with respect to  the subtree relation,      the binary tree 
$    \big\langle   \gamma    \,   ,   \,     \langle  \frac{ 1 }{ \alpha , s }  ,   \frac{ n }{ \beta , t }   \rangle   \,      \big\rangle    $ 
is not a subtree of   elements  of 
$ \mathbb{N}^{ \alpha }_{ s }      \cup  \mathbb{N}^{ \beta }_{ t }      \cup \lbrace   \alpha ,   \beta \rbrace   $.

\item[(B)]    Since    $ \alpha , \beta  $ are incomparable with respect to  the subtree relation  and 
$ \alpha $ is  not a substree of   $  t  $,  the binary tree 
$ \frac{ 1 }{ \alpha , s }  $ is not a subtree of  elements  of   $  \mathbb{N}^{ \beta }_{ t }     \cup \lbrace \beta \rbrace   $.

\item[(C)]     Since     $ \alpha , \beta  $ are incomparable with respect to  the subtree relation  and  
 $ \beta $ is  not a substree of   $  s  $,  the binary tree 
$ \frac{ n }{ \beta , t  } $ is not a subtree of  elements  of  
$  \mathbb{N}^{ \alpha }_{ s }   \cup \lbrace \alpha \rbrace  $.

\end{itemize}

Assume $T$ satisfies (1)-(2). 
We need to show that   $ T \in   \mathbb{M}^{ \alpha , \beta , \gamma }_{ s, t, n }     $. 
Recall that  (see Definition   \ref{ThirdNotationSystem})
\begin{multline*}
 \frac{ 0 }{ \alpha , s }    =        \alpha 
 \    \wedge     \   
 \forall     L  \in    \mathbb{N}^{ \alpha }_{ s }     \;    \exists   k  \in    \mathbb{N}  \;    [     \   
   k \geq 1   \;   \wedge    \;     L =  \frac{ k }{ \alpha , s }       \           ] 
 \       \   
 \mbox{  and    }   
\\
  \frac{ 0 }{ \beta  , t }   =    \beta        
   \    \wedge     \   
 \forall     R  \in    \mathbb{N}^{ \beta }_{ t }     \;    \exists  m \in   \mathbb{N}  \;    [     \   
   m \geq 1   \;   \wedge    \;     R =  \frac{ m }{ \beta  , t }       \              ] 
\           .
\end{multline*}
Hence, since  $T$ satisfies (1)-(2),   there exist   natural numbers  $ k, m  \geq 0 $ such that 
\[
\begin{array}{r l}
\textup{ (i) }  
 &   
  \big\langle   \gamma    \,   ,   \,     \langle  \frac{ 1 }{ \alpha , s }  ,   \frac{ n }{ \beta , t }   \rangle   \,      \big\rangle    
   \sqsubseteq T
   \\
   \\
 \textup{ (ii) }
   & 
   T = 
    \Big\langle   T      \Big[      
  \big\langle   \gamma    \,   ,   \,     \langle  \frac{ 1 }{ \alpha , s }  ,   \frac{ n }{ \beta , t }   \rangle   \,      \big\rangle  
      \,  \mapsto  \,   
       \gamma   
 \   ,      \   
 \frac{ 1 }{ \alpha , s }    \,  \mapsto   \,   \alpha    
 \   ,      \   
\frac{ n }{ \beta , t }     \,  \mapsto   \,   \beta    
\Big]   
 \   ,      \   
\big\langle  \frac{ k }{ \alpha , s }   \,  ,   \,    \frac{ m }{ \beta , t }       \big\rangle     \;      \Big\rangle    
\                              .
\end{array}
\]

Let 
\[
S
= 
T      \Big[      
  \big\langle   \gamma    \,   ,   \,     \langle  \frac{ 1 }{ \alpha , s }  ,   \frac{ n }{ \beta , t }   \rangle   \,      \big\rangle  
      \,  \mapsto  \,   
       \gamma   
 \   ,      \   
 \frac{ 1 }{ \alpha , s }    \,  \mapsto   \,   \alpha    
 \   ,      \   
\frac{ n }{ \beta , t }     \,  \mapsto   \,   \beta    
\Big]   
\        .
\]
Assume $S = \gamma $. 
By  (i), 
$     \big\langle   \gamma    \,   ,   \,     \langle  \frac{ 1 }{ \alpha , s }  ,   \frac{ n }{ \beta , t }   \rangle   \,      \big\rangle      $
 is a subtree of $T$.
 By (A),  
 $     \big\langle   \gamma    \,   ,   \,     \langle  \frac{ 1 }{ \alpha , s }  ,   \frac{ n }{ \beta , t }   \rangle   \,      \big\rangle      $
 is a subtree of neither   $  \frac{ k }{ \alpha , s }   $  nor    $ \frac{ m }{ \beta , t }     $. 
Hence 
\[
T  =   \big\langle   \gamma    \,   ,   \,   
  \langle  \frac{ 1 }{ \alpha , s }  ,   \frac{ n }{ \beta , t }   \rangle   \,      \big\rangle   
\in  
 \mathbb{M}^{ \alpha , \beta , \gamma }_{ s, t, n }     
\          .
\]

Assume now $ S \neq    \gamma $. 
 Since $T$ satisfies (i), 
$     \big\langle   \gamma    \,   ,   \,     \langle  \frac{ 1 }{ \alpha , s }  ,   \frac{ n }{ \beta , t }   \rangle   \,      \big\rangle      $
 is a subtree of $T$. 
By (A), 
 $     \big\langle   \gamma    \,   ,   \,     \langle  \frac{ 1 }{ \alpha , s }  ,   \frac{ n }{ \beta , t }   \rangle   \,      \big\rangle      $
 is a subtree of neither   $  \frac{ k }{ \alpha , s }   $  nor    $ \frac{ m }{ \beta , t }     $. 
 Hence, by  (ii)  and the definition of $S$ 
\[
  \big\langle   \gamma    \,   ,   \,     \langle  \frac{ 1 }{ \alpha , s }  ,   \frac{ n }{ \beta , t }   \rangle   \,      \big\rangle    
\sqsubseteq S 
\               .
\tag{iii}
\]

 By (ii)  and  (A)-(C) 
\[
\begin{array}{r c l}
S
&  =   &  
T  \Big[      
  \big\langle   \gamma    \,   ,   \,     \langle  \frac{ 1 }{ \alpha , s }  ,   \frac{ n }{ \beta , t }   \rangle   \,      \big\rangle  
      \,  \mapsto  \,   
       \gamma   
 \   ,      \   
 \frac{ 1 }{ \alpha , s }    \,  \mapsto   \,   \alpha    
 \   ,      \   
\frac{ n }{ \beta , t }     \,  \mapsto   \,   \beta    
\Big]   
\\
\\  
&=&  
\Big\langle     
S   \Big[      
  \big\langle   \gamma    \,   ,   \,     \langle  \frac{ 1 }{ \alpha , s }  ,   \frac{ n }{ \beta , t }   \rangle   \,      \big\rangle  
      \,  \mapsto  \,   
       \gamma   
 \   ,      \   
 \frac{ 1 }{ \alpha , s }    \,  \mapsto   \,   \alpha    
 \   ,      \   
\frac{ n }{ \beta , t }     \,  \mapsto   \,   \beta    
\Big]   
\  ,    \    
 \big\langle   \frac{ k  \,  \dot{-} \,    1  }{ \alpha , s }    \,   ,   \,      \frac{ m \,  \dot{-} \,  n   }{ \beta , t }       \big\rangle  
   \;       \Big\rangle 
   \              .
\end{array}
\tag{iv} 
\]
where 
 \[
 x     \,  \dot{-} \,    y 
 =
  \begin{cases}
 x     &    \mbox{ if  } x  < y 
 \\
 x - y     &   \mbox{ otherwise.} 
 \end{cases}
 \]
By (iii)-(iv),  $S$ satisfies (1)-(2). 
Hence,  by the induction hypothesis,    $ S \in   \mathbb{M}^{ \alpha , \beta , \gamma }_{ s, t, n }     $. 
It then follows from (iv) and (ii)  that   $ T \in   \mathbb{M}^{ \alpha , \beta , \gamma }_{ s, t, n }     $.

Thus, by induction, $ T \in   \mathbb{M}^{ \alpha , \beta , \gamma }_{ s, t, n }     $  if $ T  $ satisfies (1)-(2).
\end{proof}

In the proof of Lemma \ref{MultiplicationLemma},  
we did not use the assumption that   $ \alpha $ is  not a substree of   $  s  $  and  $ \beta $ is  not a substree of   $  t  $. 
With these assumptions,    Lemma   \ref{NaturalNumberLikeObjects}   tells us that  
 $  \mathbb{N}^{ \alpha }_{ s }  \cup \lbrace \alpha \rbrace  $  and 
  $ \mathbb{N}^{ \beta  }_{ t }     \cup \lbrace \beta   \rbrace     $   
  are existentially definable  in    $ \mathcal{T} ( \mathcal{L}_{ \mathsf{BT} }  )  $.
 Thus,    Lemma \ref{MultiplicationLemma}    shows that   
 $   \mathbb{M}^{ \alpha , \beta , \gamma }_{ s, t, n }      $   
is  existentially definable  in    $ \mathcal{T} ( \mathcal{L}_{ \mathsf{BT} }  )  $.

\subsection{Arithmetic  with the Substitution Operator} 
\label{UndecidabilityExistentialTheorySubstitutionOperator}

We are finally ready to give an existential interpretation of 
 $ ( \mathbb{N} , 0,1, +, \times ) $ in  $ \mathcal{T} ( \mathcal{L}_{ \mathsf{BT} }  )  $.
Lemma  \ref{NaturalNumberLikeObjects} will allow us to specify an existentially definable domain 
while Lemma \ref{MultiplicationLemma} will allow us to give an existential definition of multiplication on the chosen domain. 
Addition will be handled very easily using the substitution operator.

\begin{theorem}     \label{ExistentialInterpretationOfFirstOrderArithmetic}

$ ( \mathbb{N} , 0,1, +, \times ) $ is  $ \exists $-interpretable in $ \mathcal{T} ( \mathcal{L}_{ \mathsf{BT} }  )  $.
Hence, $ \mathsf{Th}^{ \exists }   (  \mathcal{T} ( \mathcal{L}_{ \mathsf{BT} }  )  ) $ is undecidable. 

\end{theorem}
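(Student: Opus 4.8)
The plan is to exhibit an $\exists$-interpretation of $(\mathbb{N},0,1,+,\times)$ in $\mathcal{T}(\mathcal{L}_{\mathsf{BT}})$ built on Lemmas~\ref{NaturalNumberLikeObjects} and~\ref{MultiplicationLemma}, and then quote the Matiyasevich--Robinson--Davis--Putnam theorem. First I would fix five distinct variable-free $\mathcal{L}_{\mathsf{BT}}$-terms $\alpha,\beta,\gamma,s,t$ of a common size, so that no one of them is a subtree of another; this single choice supplies the hypotheses of both lemmas (incomparability of $\alpha,\beta,\gamma$ under $\sqsubseteq$, and $\alpha,\beta$ being subtrees of neither $s$ nor $t$) together with all the further side conditions used below. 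Using representation~(A) of Section~\ref{SecondBasicLemma} I encode $n\in\mathbb{N}$ by $\frac{n}{\alpha,s}$ and take the interpreting domain to be $D:=\mathbb{N}^{\alpha}_{s}\cup\{\alpha\}$. By Lemma~\ref{NaturalNumberLikeObjects}, together with the quantifier-free definability of $\sqsubseteq$ in $\mathcal{T}(\mathcal{L}_{\mathsf{BT}})$, the set $D$ has a quantifier-free, hence existential, defining formula $\delta(x)$, and the constants $0,1$ are interpreted by the quantifier-free formulas $x=\alpha$ and $x=\langle\alpha,s\rangle$.

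Addition is essentially free. Since $\alpha$ is not a subtree of $s$, the tree $\frac{k}{\alpha,s}$ contains exactly one occurrence of $\alpha$ (at the bottom of its left spine), so $\frac{k}{\alpha,s}[\alpha\mapsto\frac{m}{\alpha,s}]=\frac{k+m}{\alpha,s}$ while $\alpha[\alpha\mapsto y]=y$; hence $\phi_{+}(x,y,z):\equiv z=x[\alpha\mapsto y]$ is a quantifier-free formula defining, on $D^{2}$, a function into $D$ that matches $+$ under the encoding. For multiplication I would first note that the passage between the two representations of Section~\ref{SecondBasicLemma} is quantifier-free: by the side conditions on the parameters, $x^{\ast}:=x[\alpha\mapsto\beta][s\mapsto t]$ equals $\frac{n}{\beta,t}$ whenever $x=\frac{n}{\alpha,s}$, and this map is injective on $D$. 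Let $\psi_{\mathbb{M}}(T;w)$ be the existential formula obtained from the right-hand side of Lemma~\ref{MultiplicationLemma} by substituting the free variable $w$ for the parameter $\frac{n}{\beta,t}$ and replacing the two set-membership conditions with their existential (indeed quantifier-free) definitions given by Lemma~\ref{NaturalNumberLikeObjects}; thus $\psi_{\mathbb{M}}(T;w)$ holds precisely when $w=\frac{n}{\beta,t}$ for some $n\geq 1$ and $T\in\mathbb{M}^{\alpha,\beta,\gamma}_{s,t,n}$. Recalling that, for $a,b>0$, one has $ab=c$ iff the computation tree of $b\times a$ — a member of $\mathbb{M}^{\alpha,\beta,\gamma}_{s,t,b}$ whose final pair is $\langle\frac{a}{\alpha,s},\frac{c}{\beta,t}\rangle$ — exists, I would take $\phi_{\times}(x,y,z)$ to be
\begin{multline*}
\big(\,(x=\alpha\vee y=\alpha)\wedge z=\alpha\,\big)\ \vee\ \Big(\ x\neq\alpha\ \wedge\ y\neq\alpha\ \wedge\\
\exists T\,\exists R\;\big[\ \psi_{\mathbb{M}}\big(T;\,y[\alpha\mapsto\beta][s\mapsto t]\big)\ \wedge\ T=\big\langle R,\ \langle x,\ z[\alpha\mapsto\beta][s\mapsto t]\rangle\big\rangle\ \big]\ \Big),
\end{multline*}
which is an existential $\mathcal{L}_{\mathsf{BT}}$-formula defining, on $D^{2}$, a function into $D$ matching $\times$.

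It then remains to verify clause~(5) of the definition of $\exists$-interpretability: the map $n\mapsto\frac{n}{\alpha,s}$ is a bijection $\mathbb{N}\to D$ (surjectivity is Lemma~\ref{NaturalNumberLikeObjects}; injectivity is immediate), and under this bijection $\phi_{+}$ and $\phi_{\times}$ become the graphs of $+$ and $\times$ — for $\phi_{\times}$ this is exactly the computation-tree characterisation of multiplication combined with Lemma~\ref{MultiplicationLemma}. Having produced an $\exists$-interpretation of $(\mathbb{N},0,1,+,\times)$ in $\mathcal{T}(\mathcal{L}_{\mathsf{BT}})$, the decidability-transfer property of $\exists$-interpretability recorded in Section~\ref{Preliminaries}, together with the undecidability of $\mathsf{Th}^{\exists}(\mathbb{N},0,1,+,\times)$ (see \cite{Davis1973}), yields that $\mathsf{Th}^{\exists}(\mathcal{T}(\mathcal{L}_{\mathsf{BT}}))$ is undecidable.

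The hard part will be the multiplication clause. One must choose the five parameters so that \emph{all} the side conditions hold simultaneously — those of Lemmas~\ref{NaturalNumberLikeObjects} and~\ref{MultiplicationLemma} and, in addition, those guaranteeing that $x\mapsto x[\alpha\mapsto\beta][s\mapsto t]$ faithfully transports $\frac{n}{\alpha,s}$ to $\frac{n}{\beta,t}$ and is injective on $D$ — and then check that $\psi_{\mathbb{M}}(T;w)$, with the parameter $\frac{n}{\beta,t}$ promoted to a free variable, still defines exactly the set of pairs $(T,w)$ for which $w=\frac{n}{\beta,t}$ and $T\in\mathbb{M}^{\alpha,\beta,\gamma}_{s,t,n}$ for some $n\geq 1$, so that reading off the final pair of $T$ in $\phi_{\times}$ genuinely recovers the product.
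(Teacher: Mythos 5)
Your proposal is correct and follows essentially the same route as the paper: domain $\mathbb{N}^{\alpha}_{s}\cup\{\alpha\}$ via Lemma~\ref{NaturalNumberLikeObjects}, addition by a single substitution into the unique occurrence of the base, multiplication by reading off the final pair of a computation tree in $\mathbb{M}^{\alpha,\beta,\gamma}_{s,t,n}$ via Lemma~\ref{MultiplicationLemma} (with the parameter $\frac{n}{\beta,t}$ supplied by a substitution image of the second factor), and then MRDP plus the transfer property of $\exists$-interpretability. The only differences are cosmetic parameter choices: the paper uses a separate base $\langle\perp,\perp^{2}\rangle$ for the domain and converts into both representations (with $s=t=\perp^{5}$), whereas you use the domain representation directly as representation (A) and convert only into (B); both instantiations satisfy the lemmas' side conditions.
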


\begin{proof}

Let  
\[
 0 \equiv  \   \langle  \perp ,   \perp^{ 2 }  \rangle 
,   \    \   
 s  \equiv  \   \perp^{ 5 }    
 ,   \    \   
 t = s 
  \    \   
 \mbox{  and   } 
 \     \  
  n \equiv   \    \frac{ n }{ 0  ,  s    }  
  \   \   \mbox{ for each }   n  \geq 1 
 \    .
 \]

Lemma \ref{NaturalNumberLikeObjects} tells us that the class 
$ \mathsf{N} :=   \mathbb{N}^{ 0 }_{ s  }    \cup \lbrace 0 \rbrace $ is existentially definable in    
$ \mathcal{T} (  \mathcal{L}_{ \mathsf{BT} }   )  $. 
We translate addition   on $ \mathsf{N}   $  as follows 
\[
x + y = z \equiv  \  
 \mathsf{N}   (x)   \;  \wedge  \;    \mathsf{N}    (y)    \;  \wedge  \;    \mathsf{N}    (z)    \;  \wedge  \;  
z = y [  \,   0  \,  \mapsto  \,  x   \,   ]   
\      . 
\]

We use Lemma \ref{MultiplicationLemma}  to give a translation of multiplication on   $ \mathsf{N}   $. 
Let 
\[
\gamma   \equiv   \      \langle  \perp ,   \perp^{ 3 }  \rangle 
,    \        \   
\alpha \equiv   \      \langle  \perp ,   \perp^{ 4 }  \rangle 
,    \        \   
\beta \equiv   \      \langle  \perp ,   \perp^{ 5 }  \rangle 
\        .
\]
Let 
\begin{multline*}
x \times y = z   \equiv   \  
 \mathsf{N}   (x)   \;  \wedge  \;    \mathsf{N}    (y)    \;  \wedge  \;    \mathsf{N}    (z)    \;  \wedge  \;  
 \Big(     \    
 (   \   x = 0 \;   \wedge   \;   z = 0   \   )  
 \;    \vee   \;  
  \\
  (   \   y = 0 \;   \wedge   \;   z = 0   \   )  
 \;    \vee   \;  
  (       \   x  \neq 0 \;   \wedge   \;  y  \neq 0    \;   \wedge   \;   \Phi (x, y , z )     \       )  
  \       \Big)   
\end{multline*}
where 
\begin{multline*}
 \Phi (x, y , z )      \equiv   \  
 \exists    n ,   v ,   w    \;      
 \Big[     \   
   n   = x [  0 \,  \mapsto  \,   \beta   ]
     \;    \wedge   \;  
\mathbb{M}_{ s, t  , n   }^{ \alpha , \beta ,  \gamma   }   (w) 
     \;    \wedge   \;  
          \\
w = 
 \Big\langle  v      \,    ,    \,   
   \big\langle   y  [  0 \,  \mapsto  \,   \alpha   ]     \,   ,   \,     z    [  0 \,  \mapsto  \,   \beta   ]    \big\rangle 
     \,  
    \Big\rangle
\         \Big] 
\            .   
\end{multline*}

It follows from the definition of  $\mathbb{M}_{ s, t , n   }^{ \alpha , \beta ,  \gamma   }   $  that 
$  \Phi (x, y , z )    $   captures  correctly    $ x  \times y = z $ when $ x, y > 0 $. 
\end{proof}

Since computably enumerable sets of natural numbers are Diophantine, 
we have  the following corollary.

\begin{corollary}

$ ( \mathbb{N} , 0,1, +, \times ) $ and $ \mathcal{T} ( \mathcal{L}_{ \mathsf{BT} }  )  $
are mutually  $ \exists $-interpretable.

\end{corollary}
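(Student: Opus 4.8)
The forward direction, that $( \mathbb{N} , 0,1, +, \times )$ is $\exists$-interpretable in $\mathcal{T} ( \mathcal{L}_{ \mathsf{BT} }  )$, is Theorem~\ref{ExistentialInterpretationOfFirstOrderArithmetic}, so the plan is to establish the reverse direction: $\mathcal{T} ( \mathcal{L}_{ \mathsf{BT} }  )$ is $\exists$-interpretable in $( \mathbb{N} , 0,1, +, \times )$. First I would fix a computable bijection $\kappa : \mathbf{H} \to \mathbb{N}$; for instance, using the Cantor pairing polynomial $\pi(a,b) = \tfrac{(a+b)(a+b+1)}{2} + b$, one sets $\kappa( \perp ) = 0$ and $\kappa( \langle s, t \rangle ) = \pi( \kappa(s) , \kappa(t) ) + 1$, and a routine induction on term size shows that $\kappa$ is a bijection. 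Since $\kappa$ is surjective, the domain formula $\delta(x)$ may be taken to be $x = x$, so the interpretation has domain all of $\mathbb{N}$.

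Next I would observe that the constant $\perp$ is handled by $\phi_{ \perp }(x) \equiv (x = 0)$, and that the graphs
\[
G_{ \langle \cdot , \cdot \rangle } = \lbrace (a,b,c) \in \mathbb{N}^3 :  c = \pi(a,b) + 1 \rbrace
\]
and
\[
G_{ [ \cdot \mapsto \cdot ] } = \lbrace (a,b,c,d) \in \mathbb{N}^4 : d = \kappa\big( \kappa^{-1}(a) [ \kappa^{-1}(b) \mapsto \kappa^{-1}(c) ] \big) \rbrace
\]
are computable, hence computably enumerable. By the Matiyasevich--Robinson--Davis--Putnam theorem (see e.g.\ \cite{Davis1973}) every computably enumerable relation on $\mathbb{N}$ is Diophantine, and a Diophantine definition is, word for word, an existential $( 0,1,+,\times )$-formula. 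Hence $G_{ \langle \cdot , \cdot \rangle }$ and $G_{ [ \cdot \mapsto \cdot ] }$ are defined by existential formulas $\phi_{ \langle \cdot , \cdot \rangle }(x,y,z)$ and $\phi_{ [ \cdot \mapsto \cdot ] }(x,y,z,w)$ (for the pairing function one can even write this out, since $c = \pi(a,b)+1$ iff $\exists u \, [\, c = u+1 \wedge 2u = (a+b)(a+b+1) + 2b \,]$). Because $\kappa$ is a bijection and the operations $\langle \cdot , \cdot \rangle$ and $\cdot [ \cdot \mapsto \cdot ]$ on $\mathbf{H}$ are total, these formulas define total functions $\mathbb{N}^2 \to \mathbb{N}$ and $\mathbb{N}^3 \to \mathbb{N}$, so conditions (1)--(3) of the definition of $\exists$-interpretability are met; condition (4) is vacuous because $\mathcal{L}_{ \mathsf{BT} }$ has no relation symbols. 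By construction the resulting $\mathcal{L}_{ \mathsf{BT} }$-structure on $\mathbb{N}$ is isomorphic to $\mathcal{T} ( \mathcal{L}_{ \mathsf{BT} }  )$ via $\kappa$, which is condition (5). Thus $\mathcal{T} ( \mathcal{L}_{ \mathsf{BT} }  )$ is $\exists$-interpretable in $( \mathbb{N} , 0,1, +, \times )$, and combining this with Theorem~\ref{ExistentialInterpretationOfFirstOrderArithmetic} gives the claimed mutual $\exists$-interpretability.

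I do not expect a genuine obstacle in carrying this out. The only point deserving a moment's attention is that the Diophantine definitions supplied by the MRDP theorem are existential in exactly the sense used in this paper: their quantifiers are all unbounded existential quantifiers over $\mathbb{N}$, and equality is treated as a logical symbol, which is precisely what $\exists$-interpretability asks for. One could avoid invoking the full MRDP theorem by writing down explicit Diophantine codings of finite binary trees by hand, but since the ambient structure is $( \mathbb{N} , 0,1, +, \times )$ itself, appealing to MRDP directly is cleaner.
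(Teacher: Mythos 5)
Your proposal is correct and matches the paper's own route: the paper justifies the reverse direction in one line by noting that computably enumerable sets of natural numbers are Diophantine (MRDP), which is precisely your argument, just spelled out with an explicit coding bijection $\kappa$ and the graphs of $\langle\cdot,\cdot\rangle$ and $\cdot[\cdot\mapsto\cdot]$. Indeed, the paper's later open problem asking for an interpretation of $\mathcal{T}(\mathcal{L}_{\mathsf{BT}})$ in $(\mathbb{N},0,1,+,\times)$ that does not rely on the solution to Hilbert's 10th Problem confirms that this is exactly how the corollary is obtained there.
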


\subsection{Analogue of Hilbert`s 10th Problem} 
\label{AnalogueOfHilbers10Problem}

In this section,   we show that  the analogue of Hilbert`s 10th Problem for  
$ \mathcal{T} ( \mathcal{L}_{ \mathsf{BT} }  ) $ is undecidable. 
That is, we show that there cannot exist an algorithm that takes as input a  $ \mathcal{L}_{ \mathsf{BT} }  $-sentence of the form 
$  \exists \vec{x} [  \;  s =  t  \;  ]  $ 
and decides whether   $  \exists \vec{x} [  \;  s =  t  \;  ]  $   is true in  $ \mathcal{T} ( \mathcal{L}_{ \mathsf{BT} }  ) $.

\begin{theorem}     

The fragment  $ \mathsf{Th}^{ \mathsf{H10} }   (  \mathcal{T} ( \mathcal{L}_{ \mathsf{BT} }  )  ) $ is undecidable. 

\end{theorem}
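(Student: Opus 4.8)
The plan is to reduce the already established undecidability of $\mathsf{Th}^{\exists}(\mathcal{T}(\mathcal{L}_{\mathsf{BT}}))$ (Theorem \ref{ExistentialInterpretationOfFirstOrderArithmetic}) to undecidability of $\mathsf{Th}^{\mathsf{H10}}(\mathcal{T}(\mathcal{L}_{\mathsf{BT}}))$. For this it suffices to prove a \emph{reduction lemma}: every existential $\mathcal{L}_{\mathsf{BT}}$-sentence is, effectively and provably in $\mathcal{T}(\mathcal{L}_{\mathsf{BT}})$, equivalent to one of the form $\exists\vec{x}\,[\,s=t\,]$. Granting the lemma, a decision procedure for $\mathsf{Th}^{\mathsf{H10}}(\mathcal{T}(\mathcal{L}_{\mathsf{BT}}))$ would decide $\mathsf{Th}^{\exists}(\mathcal{T}(\mathcal{L}_{\mathsf{BT}}))$ by first rewriting the input, which is impossible.

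First I would record two normalisation identities valid in $\mathcal{T}(\mathcal{L}_{\mathsf{BT}})$. Put $E_{s,t} := (s[\,t\mapsto\perp\,])[\,\perp\mapsto (t[\,s\mapsto\perp\,])\,]$. A short case analysis, using only that $\langle\cdot,\cdot\rangle$ is injective and never equals $\perp$ (i.e. the term-model facts underlying $\mathsf{T}_1$, $\mathsf{T}_2$) together with the recursion defining the substitution operator, shows $s=t \iff E_{s,t}=\perp$. Since moreover $x\neq\perp \iff \exists u\,v\,[\,x=\langle u,v\rangle\,]$, every negated equation $s\neq t$ becomes the H10-formula $\exists u\,v\,[\,E_{s,t}=\langle u,v\rangle\,]$. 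Conjunctions are routine: $(s_1=t_1)\wedge(s_2=t_2) \iff \langle s_1,s_2\rangle=\langle t_1,t_2\rangle$, so after pulling existential quantifiers to the front a conjunction of H10-formulas is again equivalent to an H10-formula.

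The crucial — and only non-routine — step is eliminating disjunction. Here the substitution operator is used as a guard. For any tree $r$ and any $X$ one has $X[\,r\mapsto z\,]=X \iff (r\not\sqsubseteq X \ \vee\ z=r)$; taking $r:=\langle\perp,\perp\rangle$ and noting that $\langle\perp,\perp\rangle$ is a subtree of every tree except $\perp$, this becomes $X[\,\langle\perp,\perp\rangle\mapsto z\,]=X \iff (X=\perp \ \vee\ z=\langle\perp,\perp\rangle)$. Abbreviating $p:=\langle\perp,\perp\rangle$ and applying this with $X:=E_{s_i,t_i}$ I claim
\[
(s_1=t_1)\vee(s_2=t_2) \iff \exists z\,z'\,\big[\; E_{s_1,t_1}[\,p\mapsto z\,]=E_{s_1,t_1} \ \wedge\ E_{s_2,t_2}[\,p\mapsto z'\,]=E_{s_2,t_2} \ \wedge\ \langle z,z'\rangle\neq\langle p,p\rangle \;\big].
\]
Indeed, if (say) $s_1=t_1$ holds the first conjunct holds for all $z$, and one exhibits suitable $z,z'\in\{\perp,\langle\perp,\perp\rangle\}$ meeting the remaining two; conversely, if neither equality holds the first two conjuncts force $z=z'=\langle\perp,\perp\rangle$, contradicting the third. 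As the third conjunct is a negated equation, hence H10 by the previous paragraph, the whole right-hand side is a conjunction of H10-formulas and therefore H10. Putting an arbitrary quantifier-free formula into disjunctive normal form and applying these three facts (literal, conjunction, disjunction) repeatedly shows every quantifier-free $\mathcal{L}_{\mathsf{BT}}$-formula is equivalent to an H10-formula; prefixing the original existential quantifiers and merging them yields the reduction lemma, and with it the theorem.

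The main obstacle I anticipate is precisely the disjunction step: the structure has no obvious ``annihilator'' turning $A\vee B$ into a single equation, and the gadget above must be checked with care — in particular the case split in the $\Rightarrow$-direction, and the verification that $X[\,r\mapsto z\,]=X$ really characterises $r\not\sqsubseteq X \vee z=r$, which rests on the term-model axioms. Everything else — the identity $s=t\iff E_{s,t}=\perp$, the conjunction bookkeeping via pairing, the disjunctive-normal-form argument, and effectivity of the whole translation — is routine.
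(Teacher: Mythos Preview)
Your argument is correct but takes a noticeably different route from the paper's. The paper's proof is considerably shorter: it does \emph{not} try to collapse every existential sentence into a single equation. Instead it observes that it suffices to compute, from an existential sentence $\phi$, a finite list $\phi_1,\ldots,\phi_n$ of sentences of the form $\exists\vec{x}\,[\,s=t\,]$ with $\phi\leftrightarrow\bigvee_i\phi_i$; a decision procedure for $\mathsf{Th}^{\mathsf{H10}}$ would then decide $\phi$ by testing each $\phi_i$. Conjunction is handled by the same pairing trick you use, and for negated atoms the paper simply notes
\[
s\neq t \ \Leftrightarrow\ s\not\sqsubseteq t \ \vee\ t\not\sqsubseteq s \ \Leftrightarrow\ t[\,s\mapsto\langle s,s\rangle\,]=t \ \vee\ s[\,t\mapsto\langle t,t\rangle\,]=s,
\]
turning $s\neq t$ into a disjunction of two \emph{equations}; disjunctions are then left alone.

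Your approach invests more work up front --- the gadget $E_{s,t}$ to make $s=t$ equivalent to $E_{s,t}=\perp$, and the clever use of the substitution fixed-point identity $X[\,p\mapsto z\,]=X\Leftrightarrow(X=\perp\vee z=p)$ to encode disjunction --- and in return obtains a strictly stronger normal form: every existential $\mathcal{L}_{\mathsf{BT}}$-sentence is equivalent to a \emph{single} $\exists\vec{x}\,[\,s=t\,]$. The paper's proof buys brevity; yours buys a sharper syntactic result. Both are valid, and the verifications you flag as needing care (that $s=t\iff E_{s,t}=\perp$, and that $X[r\mapsto z]=X\iff r\not\sqsubseteq X\vee z=r$) go through by the straightforward inductions you indicate.
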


\begin{proof}

Since  $ \mathsf{Th}^{ \exists  }   (  \mathcal{T} ( \mathcal{L}_{ \mathsf{BT} }  )  ) $ is undecidable, 
it suffices to show that  given an existential $ \mathcal{L}_{ \mathsf{BT} }  $-sentence  $ \phi $, 
we can compute a finite number   of $ \mathcal{L}_{ \mathsf{BT} }  $-sentences   $ \phi_1,  \ldots , \phi_n $   of the form 
$ \exists  \vec{x}  \;   [   \  s = t   \   ]   $
such that 
\[
  \mathcal{T} ( \mathcal{L}_{ \mathsf{BT} }  )    \models \phi    \leftrightarrow    \bigvee_{ i = 1 }^{ n }  \phi_i 
\        .
\]
Since 
\[
  \mathcal{T} ( \mathcal{L}_{ \mathsf{BT} }  )    \models  
   (      \        s_1 = t_1      \;      \wedge     \;   s_2 = t_2      \        )    \leftrightarrow    
   \langle  s_1,  s_2   \rangle  =  \langle   t_1, t_2  \rangle  
\]
it suffices to show that given a  $ \mathcal{L}_{ \mathsf{BT} }  $-formula of the form   $ s \neq  t $, 
we can compute a finite number of atomic  $ \mathcal{L}_{ \mathsf{BT} }  $-formulas $ s_1 = t_1,  \ldots , s_k = t_k $ 
such that 
\[
  \mathcal{T} ( \mathcal{L}_{ \mathsf{BT} }  )    \models  s  \neq   t     \leftrightarrow    \bigvee_{ j = 1 }^{ k }   s_j = t_j
\        .
\]
This is the case since 
\[
\begin{array}{r c l }
s  \neq t   
& \Leftrightarrow   &  
s  \not\sqsubseteq t   \  \vee    \   t   \not\sqsubseteq    s  
\\
\\
& \Leftrightarrow   &  
t [  \,   s   \, \mapsto    \,  \langle s, s  \rangle  \,  ]   =   t    
\    \vee    \  
s  [  \,   t   \, \mapsto    \,  \langle t, t  \rangle  \,  ]   =   s   
\           .              \qedhere 
\end{array}
\]

\end{proof}

\section{Undecidable Fragments IV}   
\label{UndecidableFragmentsPartIV}

In this section, 
we give a direct proof  of  undecidability of   $ \mathsf{Th}^{ \exists }   (  \mathcal{T} ( \mathcal{L}_{ \mathsf{BT} }  )  ) $
by constructing a many-to-one reduction of Post`s Correspondence Problem
(see  Section \ref{UndecidableFragmentsPartISubtreeRelation} for the  definition  of PCP). 
It is not clear to us whether this result can be used to give a new proof of unsolvability of Hilbert`s 10th Problem. 
In particular,   it is not clear to us whether it is possible to construct an existential interpretation of 
$   \mathcal{T} ( \mathcal{L}_{ \mathsf{BT} }  ) $ in   $ ( \mathbb{N} , 0,1, +, \times ) $
without using the exponential function to code sequences.

\begin{open problem}

Construct an   existential interpretation of   the structure 
$   \mathcal{T} ( \mathcal{L}_{ \mathsf{BT} }  ) $ in   $ ( \mathbb{N} , 0,1, +, \times ) $
which does not rely on  the solution to Hilbert`s 10th Problem. 

\end{open problem}

Given an instance  $\langle a_{1}, b_{1} \rangle,\ldots , \langle a_{n}, b_{n}  \rangle$ of PCP, 
we need to compute an existential   $ \mathcal{L}_{ \mathsf{BT} }  $-sentence $ \phi $ that is true in 
$ \mathcal{T} ( \mathcal{L}_{ \mathsf{BT} }  ) $ if and only if   
$\langle a_{1}, b_{1} \rangle,\ldots , \langle a_{n}, b_{n}  \rangle$ 
has a solution. 
Recall that  $\langle a_{1}, b_{1} \rangle,   \ldots    , \langle a_{n}, b_{n}  \rangle$  has a solution if and only if 
there exists  a finite nonempty sequence $ i_{1},   \ldots  ,i_{m} $ of indexes such that 
\[
a_{i_{1}} a_{i_{2}}\ldots  a_{i_{m}} = b_{i_{1}}  b_{i_{2}}   \ldots  b_{i_{m}} 
\  .
\tag{*} 
\]
The methods  we developed in Section   \ref{UndecidableFragmentsPartIII}  are not sufficient to encode (*). 
To highlight  the problems we need to solve, 
observe that  $\langle a_{1}, b_{1} \rangle,\ldots , \langle a_{n}, b_{n}  \rangle$ 
has a solution if and only if there exist   two  sequences 
$ u_1,   \ldots , u_k      $   and   $  v_1,    \ldots , v_m   $
such that 
\begin{itemize}

\item[(I)]    there exists  $ f_1  \in \lbrace 1,  \ldots , n \rbrace $ such that    $u_1 = a_{ f_1 }  $   and 
for all $ j \in \lbrace 1,  \ldots , k-1  \rbrace $ there exists  $ f_{ j+1 }    \in \lbrace 1,  \ldots , n \rbrace $ such that 
$ u_{ j+1 } = u_j a_{   f_{ j+1 }   } $

\item[(II)]    there exists  $ g_1  \in \lbrace 1,  \ldots , n \rbrace $ such that    $v_1 = b_{ g_1 }  $   and 
for all $ j \in \lbrace 1,  \ldots , m-1  \rbrace $ there exists  $ g_{ j+1 }    \in \lbrace 1,  \ldots , n \rbrace $ such that 
$ v_{ j+1 } = v_j b_{   g_{ j+1 }   } $

\item[(III)]  $ k =m $ and $ f_j = g_j $ for all   $j \in \lbrace 1, \ldots , k \rbrace $

\item[(IV)]  $ u_k = v_m $.

\end{itemize}
The sequences we encoded in Section  \ref{UndecidableFragmentsPartIII} 
 were simpler in the sense that they  were of the form $ m_1, m_2,  \ldots , m_k $ where there is a unique and simple transition rule that tells us how to obtain $m_{ i+1 } $ from $ m_{ i} $. 
This is not the case with $ u_1,  u_2,   \ldots , u_k    $ and   $  v_1, v_2,   \ldots , v_m  $. 
The tools we need to express that  
$ u_1,  u_2,   \ldots , u_k    $ and    $  v_1, v_2,   \ldots , v_m  $ 
satisfy  (I) and (II), respectively,   are developed   in Section   \ref{ThirdBasicLemma}. 
The tools we need to express that  the equalities in (III) hold  are developed in Section   \ref{FourthBasicLemma}.
Finally, in Section  \ref{ManyToOneReductionOfPCPExistentialFragment}, 
we put everything together and specify a many-to-one reduction of PCP to 
$ \mathsf{Th}^{ \exists }   (  \mathcal{T} ( \mathcal{L}_{ \mathsf{BT} }  )  ) $.

\subsection{Third Basic Lemma}
\label{ThirdBasicLemma}

In this section, we develop the tools we need to encode (I)-(II). 
So, we are given   a finite sequence 
$ C = \langle  c_1,  c_2,   \ldots ,   c_n  \rangle $ of nonempty binary strings, 
and we   need to express  that a sequence  $ w_1, w_2,  \ldots , w_k $ of binary strings  satisfies  
\begin{itemize}

\item[(A)] there exists $ i \in \lbrace 1,  \ldots , n \rbrace $ such that $ w_1 = c_i $

\item[(B)] for all $ j  \in \lbrace 1,  \ldots ,  k-1 \rbrace   $ there exists $ i \in \lbrace 1,  \ldots , n \rbrace $
such that $ w_{ j+1 } = w_{ j }   c_i $.

\end{itemize}
In other words, we need to  give  an existential definition of the class  $ \mathbb{P} (C) $ 
of all  sequences $ w_1, w_2,  \ldots , w_k $  that satisfy  (A)-(B). 
Our proof of existential definability of   $ \mathbb{P} (C) $  is an  extension of the  method  we used  in 
Section  \ref{SecondBasicLemma} 
to show that multiplication is existentially  definable (see Lemma \ref{MultiplicationLemma}). 
A careful  inspection  of Lemma \ref{MultiplicationLemma} shows that we use the classes  $ \mathbb{N}^{ \alpha }_{ s } $ 
in an essential way when we  prove  that  $   \mathbb{M}^{ \alpha , \beta , \gamma }_{ s, t, n }    $  is existentially definable. 
The role played by a class   $ \mathbb{N}^{ \alpha }_{ s } $    will now be played by a class  $ A^* $  of all finite strings over the  finite alphabet $A$. 
Just as with  $ \mathbb{N}^{ \alpha }_{ s } $,     we need to know that $ A^* $ is existentially definable. 
But this follows from  the existential interpretation of  
$ (   A^*, \varepsilon , a_1, \ldots , a_n , ^{ \frown}  , \vert \cdot  \vert  ) $  
we gave in  Section \ref{ExistentialInterpretationFreeSemigroupsLengthOperator}.
For technical reasons which have to do with the proof of Lemma \ref{ThirdBasicLemmaMainResult}, 
we need to modify slightly the map $ \tau :  A^* \to \mathbf{H} $   we gave in   
Section \ref{ExistentialInterpretationFreeSemigroupsLengthOperator}.

\begin{definition}  \label{ThirdBasicLemmaFirstDefinition}

Let $ A = \lbrace a_1, \ldots , a_m \rbrace $ be a finite alphabet. 
For each natural number $ i   \geq 1 $, 
let   $ \mathsf{g}_i \equiv    \     \langle  \perp^{ 3+i }  ,   \perp^{ 3+i }   \rangle $. 
Let $ \alpha \in \mathbf{H} $ be incomparable with   $ \mathsf{g}_i  $ with respect to $ \sqsubseteq $  for all $i $. 
 We define a one-to-one map    $ \tau_{ \alpha }  :  A^*    \to   \mathbf{H}    $   by recursion 
 \[
 \tau_{ \alpha  }   (  w ) =   \begin{cases}
 \alpha                                                                                          &    \mbox{ if } w = \varepsilon 
 \\
 \langle  \alpha    ,     \mathsf{g}_i    \rangle                               &    \mbox{ if } w = a_i 
 \\
  \tau_{ \alpha }   ( w_0 )   \big[      \;     \alpha    \,  \mapsto   \,       \tau_{ \alpha }  ( w_{ 1 } )     \;    \big]  
                    &    \mbox{ if } w  = w_0 w_1    \  \mbox{  and  }    \      w_0 \in A           \        . 
 \end{cases}
 \]
Given $ s \in A^* $, we write $ \frac{ s }{ \alpha } $ for $ \tau_{ \alpha } ( s ) $. 
 Furthermore, we write $ a_i $ for   $ \mathsf{g}_i    $.

\end{definition}

Observe that this definition differs slightly from the definition  we gave in the proof of 
 Theorem \ref{SecondExistentialInterpretation}.
 In the proof of   Theorem \ref{SecondExistentialInterpretation}, 
 if a binary tree $T$ represents the string $s$, then reading $T$ bottom-up corresponds to reading $s$ from left to right. 
 In the definition we have just given, 
 if a binary tree $T$ represents the string $s$, then reading $T$ bottom-up corresponds to reading $s$ from right  to left. 
 Nonetheless, it is easy to see that 
 the proof  of  Theorem \ref{SecondExistentialInterpretation}  shows that   $ \tau_{ \alpha }  (  A^*  ) $
is existentially definable in   $ \mathcal{T} ( \mathcal{L}_{ \mathsf{BT} }  )  $.

\begin{lemma}

Let $ A $ be a finite alphabet. 
Then, $ \tau_{ \alpha }  (  A^*  ) $ is existentially definable in  $ \mathcal{T} ( \mathcal{L}_{ \mathsf{BT} }  )  $. 

\end{lemma}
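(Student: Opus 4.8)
The plan is to repeat the argument from the proof of Theorem \ref{SecondExistentialInterpretation} almost verbatim, the only changes being that the role played there by the constant $\tau(\varepsilon)$ is now played by the parameter $\alpha$, and that the classes $\mathbb{N}^{\varepsilon}_{\mathsf{g}_i}$ are replaced by the classes $\mathbb{N}^{\alpha}_{\mathsf{g}_i}$. First I would observe that, since $\alpha$ is incomparable with every $\mathsf{g}_i$ with respect to $\sqsubseteq$, in particular $\alpha$ is not a subtree of $\mathsf{g}_i$; hence Lemma \ref{NaturalNumberLikeObjects}, applied with the one-element list $\vec{s} = ( \mathsf{g}_i )$ (for which the side condition ``$s_n \neq s_j$ for $j < n$'' is vacuous), tells us that each class $\mathbb{N}^{\alpha}_{\mathsf{g}_i} \cup \lbrace \alpha \rbrace$ is existentially definable in $\mathcal{T}(\mathcal{L}_{\mathsf{BT}})$.

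Next I would claim that $\tau_{\alpha}(A^{*})$ is defined by the existential formula
\[
\phi(x) \; \equiv \; x [ \, \mathsf{g}_2 \mapsto \mathsf{g}_1 , \; \ldots , \; \mathsf{g}_m \mapsto \mathsf{g}_1 \, ] \; \in \; \mathbb{N}^{\alpha}_{\mathsf{g}_1} \cup \lbrace \alpha \rbrace .
\]
For the inclusion $\tau_{\alpha}(A^{*}) \subseteq \lbrace x : \phi(x) \rbrace$, it follows directly from Definition \ref{ThirdBasicLemmaFirstDefinition} that if $w = a_{i_1} \cdots a_{i_k}$ then $\tau_{\alpha}(w)$ is the $k$-fold nesting $\langle \cdots \langle \langle \alpha , \mathsf{g}_{i_k} \rangle , \mathsf{g}_{i_{k-1}} \rangle \cdots , \mathsf{g}_{i_1} \rangle$, so applying all the substitutions $\mathsf{g}_j \mapsto \mathsf{g}_1$ turns it into $\frac{ k }{ \alpha , \mathsf{g}_1 } \in \mathbb{N}^{\alpha}_{\mathsf{g}_1}$ (in the notation of Definition \ref{ThirdNotationSystem}), and into $\alpha$ itself when $w = \varepsilon$. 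For the converse inclusion, since $\mathbb{N}^{\alpha}_{\mathsf{g}_1} \cup \lbrace \alpha \rbrace \subseteq \tau_{\alpha}(A^{*})$, it suffices to prove, for each $1 \leq i \leq m$ and each finite binary tree $t$, the implication $t [ \, \mathsf{g}_i \mapsto \mathsf{g}_1 \, ] \in \tau_{\alpha}(A^{*}) \Rightarrow t \in \tau_{\alpha}(A^{*})$, and then to peel the substitutions of $\phi$ off one at a time; this implication I would establish by induction on the size of $t$, exactly as in the proof of Theorem \ref{SecondExistentialInterpretation}.

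The structural facts used in that induction are, first, that $\mathsf{g}_1, \ldots, \mathsf{g}_m$ are pairwise incomparable with respect to $\sqsubseteq$, which is immediate since $\mathsf{g}_j = \langle \perp^{3+j} , \perp^{3+j} \rangle$ and distinct $\mathsf{g}_j$ have distinct sizes, and second, that $\mathsf{g}_1 \notin \tau_{\alpha}(A^{*})$, since every element of $\tau_{\alpha}(A^{*})$ is either $\alpha$ or of the form $\langle t_0 , \mathsf{g}_j \rangle$ with $\mathsf{g}_j$ never of the form $\perp^{k}$, whereas $\mathsf{g}_1 = \langle \perp^{4} , \perp^{4} \rangle$ would force its first component to be $\perp^{4}$. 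I expect the only genuine point to check — and hence the main, if minor, obstacle — is that reversing the reading direction in Definition \ref{ThirdBasicLemmaFirstDefinition} relative to the map of Theorem \ref{SecondExistentialInterpretation} does not disturb the inductive step: there one writes $t = \langle t_0 , t_1 \rangle$, notes that $t_0 [ \mathsf{g}_i \mapsto \mathsf{g}_1 ] \in \tau_{\alpha}(A^{*})$ and $t_1 [ \mathsf{g}_i \mapsto \mathsf{g}_1 ] = \mathsf{g}_j$, invokes the induction hypothesis on $t_0$, and uses the two facts above to force $t_1 = \mathsf{g}_{l}$ and hence $t \in \tau_{\alpha}(A^{*})$. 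Since for $w_0 = a_i \in A$ the tree $\tau_{\alpha}(w_0 w_1)$ equals $\langle \tau_{\alpha}(w_1) , \mathsf{g}_i \rangle$ — exactly what the clause $\tau_{\alpha}(w_0) [ \alpha \mapsto \tau_{\alpha}(w_1) ]$ in Definition \ref{ThirdBasicLemmaFirstDefinition} produces — the same decomposition remains available, and the proof goes through unchanged.
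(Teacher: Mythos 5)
Your proposal is correct and is essentially the paper's own argument: the paper gives no separate proof of this lemma, merely remarking that the proof of Theorem \ref{SecondExistentialInterpretation} carries over despite the reversed reading direction, and your adaptation (parameter $\alpha$ in place of $\tau(\varepsilon)$, the classes $\mathbb{N}^{\alpha}_{\mathsf{g}_i}$ via Lemma \ref{NaturalNumberLikeObjects}, and the same induction peeling off the substitutions $\mathsf{g}_i \mapsto \mathsf{g}_1$) is exactly that adaptation. The only nitpick is that pairwise incomparability of the $\mathsf{g}_j$ needs slightly more than ``distinct sizes'' (namely that proper subtrees of $\mathsf{g}_{j'}$ are all of the form $\perp^k$, which you in fact note elsewhere), but this is the same unproved property (*) the paper itself assumes.
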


Since we are interested in describing sequences that satisfy (A)-(B), 
it  is  not  the set $\lbrace 0, 1 \rbrace^* $ we are interested in,  but rather the subset  generated by 
$ \lbrace c_1, c_2,  \ldots , c_n \rbrace $ under concatenation. 
We also need to treat the  $ c_i$`s as distinct objects since  we intend to replace   $C$  with    one of the sequences 
$ \langle  a_1, \ldots , a_n  \rangle  $,   $  \langle  b_1,  \ldots , b_n  \rangle  $ where 
$ \langle a_1, b_1 \rangle ,   \ldots  , \langle a_n , b_n \rangle $
is an instance of PCP.
To capture this, we  associate   elements of $ \lbrace c_1, c_2,  \ldots , c_n \rbrace^+ $
with  strings over a larger alphabet 
$ \lbrace 0,  1,   \mu_1 ,  \mu_2,    \ldots , \mu_n    \rbrace $
where $ \mu_i $ represents the last  letter of $ c_i  $. 
Assume for example $ c_1 = 110 $, $ c_2 = 011 $ and $ c_3 = 1010 $. 
Then, we associate the binary  string $ c_2 c_1 c_3 $ with the string 
$ 01 \mu_2  11  \mu_1    101   \mu_3 $.

\begin{definition}  \label{ThirdBasicLemmaFirstDefinitionPartII}

Let $ C  =  \langle  c_1,  c_2,   \ldots ,   c_n  \rangle $ be a sequence of nonempty binary strings. 
We associate   $ c_i $ with a finite binary tree in
 $ \tau_{ \alpha }  (   \lbrace 0, 1, \mu_1,  \ldots , \mu_n \rbrace^*  )  $ 
as follows 
\[
\frac{ c_i }{   C,  \alpha }   \equiv   \   
\frac{ w_i   \mu_i }{  \alpha }           
\     \   
\mbox{ where }      
\       \   
  c_i = w_i  d   \;   \wedge  \;   w_i   \in   \lbrace 0, 1   \rbrace^*  \;   \wedge  \;   d  \in   \lbrace 0, 1   \rbrace     \         . 
\]
We associate    $  c_{ i_1 }  c_{ i_2 }   \ldots c_{ i_m }  $   with a finite binary tree in
 $ \tau_{ \alpha }  (   \lbrace 0, 1, \mu_1,  \ldots , \mu_n \rbrace^* )  $ 
as follows 
\[
\frac{ c_{ i_1 }  c_{ i_2 }   \ldots c_{ i_m }   }{   C,  \alpha }   \equiv   \ 
\frac{ w_{i_1 }   \mu_{ i_1 }   w_{ i_2 }   \mu_{ i_2 }     \ldots    w_{ i_m }   \mu_{ i_m }     }{  \alpha }
\                           .
\]
We let $ \frac{ \varepsilon }{ C, \alpha }  \equiv   \   \alpha  $.

\end{definition}

See Figure   \ref{ThirdBasicLemmaFirstFigure}
for a visualization of  
$ \frac{ c_1 }{ C, \alpha } $,  $   \    \frac{ c_2 }{ C, \alpha } $, $   \    \frac{ c_1 c_2 }{ C, \alpha } $, $   \    \frac{ c_2 c_1 }{ C, \alpha } $
when   $  c_1 =  110$ and  $     c_2 =   011 $.

\begin{figure}

\begin{center}

\begin{forest}
[
[
[
[$  \alpha    $ ]
   [   $  \mu_1   $    ]
]
[  $  1 $     ]
]
[   $  1  $   ] 
]
\end{forest}
\begin{forest}
[
[
[
[$  \alpha    $ ]
   [   $  \mu_2   $    ]
]
[  $  1 $     ]
]
[   $  0   $   ] 
]
\end{forest}
\begin{forest}
[
         [
         [
         [
         [
[
[$  \alpha    $ ]
   [   $  \mu_2   $    ]
]
[  $  1 $     ]
]
[   $  0   $   ] 
        ]
        [   $  \mu_1  $  ] 
        ]
          [   $  1  $  ] 
          ]
           [   $  1 $  ] 
          ]
]
\end{forest}
\begin{forest}
[
         [
         [
         [
         [
[
[$  \alpha    $ ]
   [   $  \mu_1   $    ]
]
[  $  1 $     ]
]
[   $  1   $   ] 
        ]
        [   $  \mu_2  $  ] 
        ]
          [   $  1  $  ] 
          ]
           [   $  0 $  ] 
          ]
]
\end{forest}

\end{center}
\caption{
Visualization of  
$ \frac{ c_1 }{ C, \alpha } $,  $   \    \frac{ c_2 }{ C, \alpha } $, $   \    \frac{ c_1 c_2 }{ C, \alpha } $, $   \    \frac{ c_2 c_1 }{ C, \alpha } $, 
respectively, 
when   $  c_1 =  110$,  $   \  c_2 =   011 $.
 }
\label{ThirdBasicLemmaFirstFigure}
\end{figure}
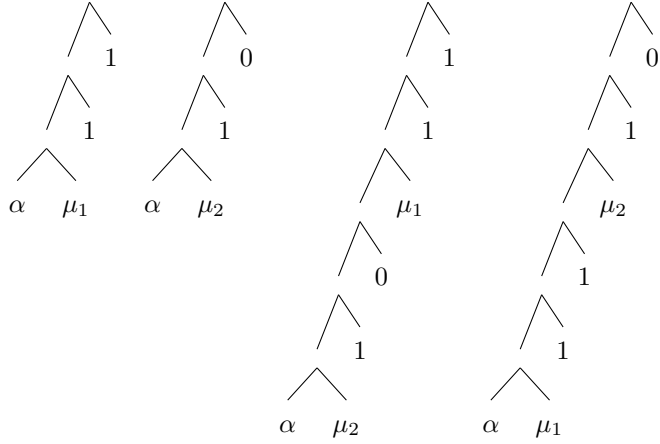

We are finally ready to give a formal definition of the class of those finite binary trees  that encode  sequences that satisfy (A)-(B).

\begin{definition}   \label{ThirdBasicLemmaSecondDefinition}

Let $ C =  \langle  c_1,  c_2,   \ldots ,   c_n  \rangle $ be a sequence of nonempty binary strings. 
Let $ \alpha ,  \gamma \in \mathbf{H} $ be incomparable  with  respect to the subtree relation. 
Assume $ \alpha $ also satisfies the condition in Definition \ref{ThirdBasicLemmaFirstDefinition}.
Let $ \mathbb{P} (C,  \alpha ,  \gamma  ) $ be the smallest subset of $ \mathbf{H} $ that satisfies 
\begin{itemize}

\item   $   \langle   \gamma  \,   ,   \,     \frac{ c_i }{ C, \alpha }       \rangle    \in  \mathbb{P} (C,  \alpha , \gamma )  $ 
for all $ i \in \lbrace 1, \ldots , n \rbrace $

\item   if $  T  \in   \mathbb{P} (C, \alpha ,  \gamma )  $  where   
$ T =  \big \langle   R     \,   ,   \,     \frac{ c_{ i_1 }  c_{ i_2 }   \ldots c_{ i_m }   }{   C,  \alpha }       \big\rangle     $,  
then  
\[
 \big\langle   \,  T  \,  ,   \,      \frac{ c_{ i_1 }  c_{ i_2 }   \ldots c_{ i_m }   c_j   }{   C,  \alpha }       \,      \big\rangle   
\in  \mathbb{P} (C ,  \alpha ,  \gamma  )  
\]
 for all $ j  \in \lbrace 1, \ldots , n \rbrace $. 

\end{itemize}

\end{definition}

See Figure \ref{ThirdBasicLemmaSecondFigure} for a visualization of  the form of  elements of  
$ \mathbb{P} (C , \alpha ,  \gamma  ) $.

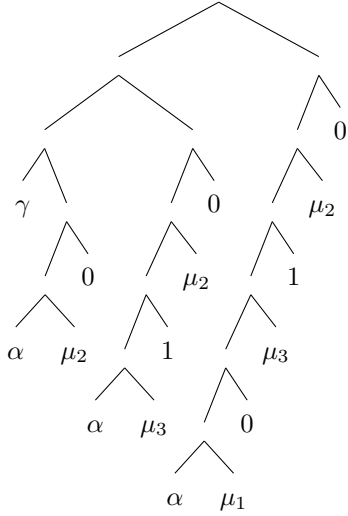
\begin{figure}

\begin{center}

\begin{forest}
[
[
[
[$  \gamma  $ ]
   [         [    [ $   \alpha   $  ]    [    $  \mu_2 $    ]  ]     [    $   0 $  ]    ] 
]
        [ 
        [   [         [    [ $  \alpha   $  ]    [    $  \mu_3 $    ]  ]     [    $   1 $  ]    ] 
        [  $   \mu_2  $  ] 
        ]
        [  $  0  $   ]  
        ]
]
      [
       [
        [ 
        [   [         [    [ $  \alpha   $  ]    [    $  \mu_1 $    ]  ]     [    $   0 $  ]    ] 
        [  $   \mu_3  $  ] 
        ]
        [  $  1  $   ]  
        ]
        [ $  \mu_2  $  ] 
        ]
        [  $  0  $   ]
        ]
]
\end{forest}

\end{center}
\caption{
Example  of  an element    of    $ \mathbb{P} (C , \alpha , \gamma ) $   when   $ c_1 = 01$, $ c_2  = 00 $, $ c_3 = 10 $. 
}
\label{ThirdBasicLemmaSecondFigure}
\end{figure}

\begin{lemma}   \label{ThirdBasicLemmaMainResult}

Let $ C =  \langle  c_1,  c_2,   \ldots ,   c_n  \rangle $ be a sequence of nonempty binary strings. 
Let $ \alpha ,  \gamma   \in \mathbf{H} $ be incomparable with  respect to the subtree relation. 
Assume $ \alpha $ also satisfies the condition in Definition \ref{ThirdBasicLemmaFirstDefinition}.
Let    $ \delta =   \langle  \alpha ,  \alpha \rangle  $. 
Let 
$
F_{ \delta }^{ \alpha } (L ) =  L [   \;   \alpha   \,   \mapsto  \,   \delta   \;   ]  
$
for all $  L    \in \mathbf{H} $.
Let $ T \in  \mathbf{H} $. 
Then, $ T \in    \mathbb{P} (C  ,  \alpha ,  \gamma )  $ if and only if 
\begin{itemize}

\item[\textup{(1) }  ]  $  \delta  \not\sqsubseteq T $

\item[\textup{(2) } ]   there exists   $ m  \in \lbrace 1, \ldots , n \rbrace $  such that 
 $   \big\langle    \gamma    \,   ,   \,    \frac{ c_m }{ C,  \alpha }     \big\rangle    \sqsubseteq T  $

\item[\textup{(3) } ]   there exists    
$  S   \in    \tau_{ \alpha }     (   \lbrace 0, 1 , \mu_1,  \ldots , \mu_n \rbrace^*  )     $ 
such that    
\[
 T   = 
 \Big\langle   
 F^{ \alpha }_{ \delta }   (T)     \Big[      
   \big\langle    \gamma    \,   ,   \,       \frac{ c_m  }{ C,  \delta  }       \big\rangle    
    \,  \mapsto  \,         \gamma   
 \   ,      \   
\frac{ c_1 }{ C,  \delta  }      \,  \mapsto   \,      \alpha  
 \   ,      \   
   \ldots 
  \   ,      \   
\frac{ c_n  }{ C,  \delta  }        \,  \mapsto   \,   \alpha  
\Big]      
 \   ,      \   
S    \;      \Big\rangle       
\               .
\]

\end{itemize}

\end{lemma}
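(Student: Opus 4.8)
The plan is to follow the blueprint of the proof of Lemma \ref{MultiplicationLemma}: the ``only if'' part is a direct computation and the ``if'' part is an induction on the size of $T$. The key preliminary step is to unwind Definition \ref{ThirdBasicLemmaSecondDefinition}: by induction on the generation of $\mathbb{P}(C,\alpha,\gamma)$ one checks that this class is exactly the set of ``stacks''
\[
\Big\langle \gamma,\ \frac{c_{j_1}}{C,\alpha},\ \frac{c_{j_1}c_{j_2}}{C,\alpha},\ \ldots,\ \frac{c_{j_1}c_{j_2}\cdots c_{j_p}}{C,\alpha}\Big\rangle
\]
with $p\geq 1$ and $j_1,\ldots,j_p\in\lbrace 1,\ldots,n\rbrace$, the outer tuple being read as in Definition \ref{ThirdNotationSystemSecondPart}. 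I abbreviate the string $w_i\mu_i$ by $u_i$, and write $\tau_{\delta}$ for the analogue of $\tau_{\alpha}$ with base $\delta$, so that $\frac{c_{i_1}\cdots c_{i_\ell}}{C,\alpha}=\tau_{\alpha}(u_{i_1}\cdots u_{i_\ell})$ and $\frac{c_{i_1}\cdots c_{i_\ell}}{C,\delta}=\tau_{\delta}(u_{i_1}\cdots u_{i_\ell})$.

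Next I would record a handful of elementary facts, playing the role of properties (A)-(C) in Lemma \ref{MultiplicationLemma}. First, for any string $x$ over $\lbrace 0,1,\mu_1,\ldots,\mu_n\rbrace$ the tree $\tau_{\alpha}(x)$ contains exactly one occurrence of $\alpha$ (induction on $|x|$, using $\alpha\not\sqsubseteq\mathsf{g}_i$); consequently $\tau_{\alpha}(xy)=\tau_{\alpha}(x)[\alpha\mapsto\tau_{\alpha}(y)]$ and $\tau_{\alpha}(x)[\alpha\mapsto\delta]=\tau_{\delta}(x)$, so that $F^{\alpha}_{\delta}$ simply turns each block $\frac{c}{C,\alpha}$ into $\frac{c}{C,\delta}$. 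Second, $\gamma$ occurs in none of the trees $\tau_{\alpha}(x),\tau_{\delta}(x)$ and $\delta=\langle\alpha,\alpha\rangle$ occurs in no tree $\tau_{\alpha}(x)$, because these are built from $\alpha$ (respectively $\delta$) and the $\mathsf{g}_i$, all incomparable with $\gamma$ (and, in the $\tau_{\delta}$ case, $\langle\alpha,\alpha\rangle$ occurs only as the base); hence in a stack as above $\gamma$ occurs only at the innermost spine position and $\delta$ occurs nowhere. Third, the $\mu$-markers force unambiguous factorisation: the only subtrees of $\tau_{\delta}(x)$ of the form $\frac{c_j}{C,\delta}=\tau_{\delta}(u_j)$ are bottom segments of the caterpillar $\tau_{\delta}(x)$; since $\delta$ occurs only at the base there is at most one, it is present iff $x$ ends in $u_j$, and $x$ cannot end in both $u_j$ and $u_{j'}$ with $j\neq j'$, since that would force $\mu_j=\mu_{j'}$. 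Therefore applying $[\frac{c_1}{C,\delta}\mapsto\alpha]\cdots[\frac{c_n}{C,\delta}\mapsto\alpha]$ to $\tau_{\delta}(x)$ yields $\tau_{\alpha}(x')$ when $x=x'u_j$ and yields $\tau_{\delta}(x)$ unchanged otherwise.

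For the ``only if'' part I would take $T=\langle\gamma,\frac{c_{j_1}}{C,\alpha},\ldots,\frac{c_{j_1}\cdots c_{j_k}}{C,\alpha}\rangle$ and verify (1)-(3) by inspection, with $m=j_1$ (the base of the spine) and $S=\frac{c_{j_1}\cdots c_{j_k}}{C,\alpha}$: condition (1) is the second fact; for (3) one computes that $F^{\alpha}_{\delta}$ converts every spine block to its $\delta$-flavour, $[\langle\gamma,\frac{c_{j_1}}{C,\delta}\rangle\mapsto\gamma]$ deletes the base of the spine (its unique occurrence), and the block-stripping substitutions shorten each remaining spine block $\frac{c_{j_1}\cdots c_{j_\ell}}{C,\delta}$ ($\ell\geq 2$) to $\frac{c_{j_1}\cdots c_{j_{\ell-1}}}{C,\alpha}$, leaving exactly the predecessor stack (or $\gamma$ when $k=1$), so that adjoining $S$ recovers $T$. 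For the ``if'' part I would induct on the size of $T$: given that $T$ satisfies (1)-(3), fix witnesses $m$ and $S=\tau_{\alpha}(x)$ and set $T^{-}:=F^{\alpha}_{\delta}(T)[\langle\gamma,\frac{c_m}{C,\delta}\rangle\mapsto\gamma,\frac{c_1}{C,\delta}\mapsto\alpha,\ldots,\frac{c_n}{C,\delta}\mapsto\alpha]$, so that $T=\langle T^{-},S\rangle$ and $T^{-}$ is smaller than $T$. If $T^{-}=\gamma$, then $\langle\gamma,\frac{c_m}{C,\alpha}\rangle\sqsubseteq\langle\gamma,S\rangle$ forces, since $\gamma$ does not occur in $S$, that $S=\frac{c_m}{C,\alpha}$, so $T$ is a base element of $\mathbb{P}(C,\alpha,\gamma)$. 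If $T^{-}\neq\gamma$, then using $F^{\alpha}_{\delta}(T)=\langle F^{\alpha}_{\delta}(T^{-}),\tau_{\delta}(x)\rangle$, that $F^{\alpha}_{\delta}(T^{-})\neq\gamma$, and that the composite substitution distributes over this outermost pair (its right component is a $\tau_{\delta}$- or $\tau_{\alpha}$-tree, never the pattern being replaced), one obtains $T^{-}=\langle F^{\alpha}_{\delta}(T^{-})[\cdots],\tau_{\delta}(x)[\cdots]\rangle$; were $x$ to end in no $u_j$ the right component would be $\tau_{\delta}(x)$, giving $\delta\sqsubseteq T^{-}\sqsubseteq T$ against (1), so $x=x'u_j$ and $\tau_{\delta}(x)[\cdots]=\tau_{\alpha}(x')$. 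Hence $T^{-}$ satisfies (1)-(3) with the same witness $m$ (which lies in $T^{-}$ because $\gamma$ is absent from $S$), so by the induction hypothesis $T^{-}\in\mathbb{P}(C,\alpha,\gamma)$; its explicit form has last spine entry $\tau_{\alpha}(u_{j_1}\cdots u_{j_p})$, matching it against the right component $\tau_{\alpha}(x')$ of $T^{-}$ gives $x'=u_{j_1}\cdots u_{j_p}$ by injectivity of $\tau_{\alpha}$, whence $S=\tau_{\alpha}(x'u_j)=\frac{c_{j_1}\cdots c_{j_p}c_j}{C,\alpha}$, and the closure clause of Definition \ref{ThirdBasicLemmaSecondDefinition} yields $T=\langle T^{-},S\rangle\in\mathbb{P}(C,\alpha,\gamma)$.

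The step I expect to be the main obstacle is pinning down the behaviour of the composite substitution in clause (3), and in particular the purpose of passing to the $\delta=\langle\alpha,\alpha\rangle$-flavoured representations via $F^{\alpha}_{\delta}$. Without this device a $\tau_{\alpha}$-tree over the alphabet that fails to factor into blocks would survive the stripping substitutions unchanged, and the downward induction in the ``if'' part could stall; it is precisely the $\delta$-separator that lets clause (1) upgrade ``the top component of $T^{-}$ is a $\tau_{\alpha}$-tree over $\lbrace 0,1,\mu_1,\ldots,\mu_n\rbrace$'' to ``it is an honest concatenation of blocks''. The supporting combinatorial facts -- uniqueness of the base occurrence under $\tau_{\alpha}$, non-overlap of blocks thanks to the $\mu$-markers, and confinement of $\gamma$ and $\delta$ in elements of $\mathbb{P}(C,\alpha,\gamma)$ -- are routine but must be stated with the same care as (A)-(C) in Lemma \ref{MultiplicationLemma}.
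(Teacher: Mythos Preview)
Your proposal is correct and follows essentially the same route as the paper's proof: the ``only if'' direction by direct inspection of the explicit stack form, and the ``if'' direction by induction on the size of $T$, splitting on whether the stripped tree $T^{-}$ (the paper's $T_0$) equals $\gamma$. Your list of preliminary facts is a more detailed version of the paper's properties (A)--(B), and your final injectivity argument identifying $x'$ with $u_{j_1}\cdots u_{j_p}$ makes explicit what the paper leaves to the reader in its closing sentence ``It then follows from (iii), (vi) and (*) that $T\in\mathbb{P}(C,\gamma)$.''
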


Before we prove the lemma,   we illustrate why the left-right implication holds. 
First, observe that (1) holds    if  $ T \in \mathbb{P} ( C, \alpha , \gamma ) $ 
(see  Figure    \ref{ThirdBasicLemmaSecondFigure}).
Now, assume  for example $T$ is the tree in Figure    \ref{ThirdBasicLemmaSecondFigure}. 
So,     $ m = 2 $ and 
\[
T  = \Big\langle   \;  
\gamma   \,  ,  \,    \frac{ c_2   }{ C,  \alpha   }        \,  ,  \,   
\frac{ c_2 c_3    }{ C,  \alpha   }         \,  ,  \,   
\frac{  c_2  c_3  c_1     }{ C,  \alpha   } 
  \;     \Big\rangle
  \            .
\]
The tree $  F^{ \alpha }_{ \delta }  (T) $ is just the tree we obtain by replacing each one of the tree occurrences of $ \alpha $ in $T$ with $ \delta $. 
Hence  
\[
 F^{ \alpha }_{ \delta }   (T) = \Big\langle   \;  
\gamma   \,  ,  \,    \frac{ c_2   }{ C,  \delta  }        \,  ,  \,   
\frac{ c_2 c_3    }{ C,  \delta  }         \,  ,  \,   
\frac{  c_2  c_3  c_1     }{ C,  \delta  } 
  \;     \Big\rangle
  \            .
\]
Since there is only one occurrence of  $  \big\langle    \gamma    \,   ,   \,    \frac{ c_2 }{ C,  \delta  }     \big\rangle  $    in 
$    F^{ \alpha }_{ \delta }   (T) $, 
we have 
\[
\begin{array}{r c l }
R_0   
& := & 
 F^{ \alpha }_{ \delta }    (T)    \big[          \   
   \ \big\langle    \gamma    \,   ,   \,       \frac{ c_2  }{ C,  \delta  }       \big\rangle    
    \,  \mapsto  \,         \gamma   
       \      \big] 
       \\
       \\
       & = & 
\big\langle   \;  
\gamma       \,  ,  \,   
\frac{ c_2 c_3    }{ C,  \delta  }         \,  ,  \,   
\frac{  c_2  c_3  c_1     }{ C,  \delta  }    
  \;     \big\rangle
 \                   .
\end{array}
\]
We replace the one occurrence of  $ \frac{ c_1 }{ C,  \delta  }     $ in $ R_0 $  and obtain 
\[
\begin{array}{r c l }
R_1  
& := & 
R_0     \big[          \   
\frac{ c_1 }{ C,  \delta  }       \,  \mapsto  \,        \alpha     
       \      \big] 
       \\
       \\
       & = & 
\big\langle   \;  
\gamma       \,  ,  \,   
\frac{ c_2 c_3    }{ C,  \delta  }         \,  ,  \,   
\frac{  c_2  c_3      }{ C,  \alpha   }    
  \;     \big\rangle
 \                   .
\end{array}
\]
Since  $\frac{  c_2  c_3      }{ C,  \alpha   }       $ does not contain  a subtree   of the form $\frac{  c_i      }{ C,  \delta   }       $
by the choice of $ \delta $, 
 there is no occurrence of $ \frac{  c_2      }{ C,  \delta   }     $ in $ R_1 $. 
 Hence 
\[
R_2 := R_1    \big[          \   
\frac{  c_2      }{ C,  \delta   }       \,  \mapsto  \,      \alpha    
       \      \big] 
       = R_1 
       \                     .
\]
We replace the occurrence of $ \frac{  c_3  }{ C,  \delta   }     $ in $ R_2 $ and obtain  
\[
\begin{array}{r c l }
R_3 
& := & 
R_2     \big[          \   
\frac{  c_3  }{ C,  \delta   }      \,  \mapsto  \,        \alpha    
       \      \big] 
       \\
       \\
       & = & 
\big\langle   \;  
\gamma       \,  ,  \,   
\frac{ c_2     }{ C,  \alpha  }         \,  ,  \,   
\frac{  c_2  c_3      }{ C,  \alpha   }         
  \;     \big\rangle
 \                   .
\end{array}
\]
Now, observe  that  $   R_3 $ is  the left  subtree of $T $    (see Figure  \ref{ThirdBasicLemmaSecondFigure}).

\begin{proof}[Proof of Lemma \ref{ThirdBasicLemmaMainResult}]

The left-right implication   is obvious. 
We prove  right-left  implication    by induction on the size of $T$. 
We need the following properties: 
\begin{itemize}

\item[(A)]   Since  $ \gamma $ and $ \alpha   $ are incomparable with respect to  the subtree relation,   the binary tree 
 $\big\langle    \gamma    \,   ,   \,    \frac{ c_m }{ C,  \alpha }     \big\rangle     $
 is not a subtree of   elements  of 
$   \tau_{ \alpha }  (   \lbrace 0, 1 , \mu_1,  \ldots , \mu_n \rbrace^*  )       $.

\item[(B)]   Since  $ \gamma $ and $ \delta   $ are incomparable with respect to  the subtree relation,   the binary tree 
 $\big\langle    \gamma    \,   ,   \,    \frac{ c_m }{ C,  \alpha }     \big\rangle     $
 is not a subtree of   elements  of 
$   \tau_{ \delta }  (   \lbrace 0, 1 , \mu_1,  \ldots , \mu_n \rbrace^*  )       $.

\end{itemize}

Assume $T$ satisfies (1)-(3). 
We need to show that   $ T \in   \mathbb{P}  (  C,  \alpha ,  \gamma  )     $. 
By assumption, we have a natural number  $ m \in \lbrace 1,  \ldots , n \rbrace $  and 
a  string   $   s \in  \lbrace 0, 1 , \mu_1,  \ldots , \mu_n \rbrace^*   $
such that 
\[
\begin{array}{r l}
\textup{ (i) }  
 &   
\delta     \not\sqsubseteq    T 
   \\
   \\
\textup{ (ii) }  
 &   
\big\langle    \gamma    \,   ,   \,    \frac{ c_m }{ C,  \alpha }     \big\rangle       \sqsubseteq T 
   \\
   \\
 \textup{ (iii) }
   & 
   T = 
  \Big\langle   
  F^{ \alpha }_{ \delta }   (T)     \Big[      
   \big\langle    \gamma    \,   ,   \,       \frac{ c_m  }{ C,  \delta  }       \big\rangle    
    \,  \mapsto  \,         \gamma   
 \   ,      \   
\frac{ c_1 }{ C,  \delta  }      \,  \mapsto   \,      \alpha  
 \   ,      \   
   \ldots 
  \   ,      \   
\frac{ c_n  }{ C,  \delta  }        \,  \mapsto   \,   \alpha  
\Big]      
 \   ,      \   
  \frac{ s   }{     \alpha }        \;      \Big\rangle       
\                              .
\end{array}
\]
Let 
\[
T_0 
= 
   F^{ \alpha }_{ \delta }   (T)     \Big[      
   \big\langle    \gamma    \,   ,   \,       \frac{ c_m  }{ C,  \delta  }       \big\rangle    
    \,  \mapsto  \,         \gamma   
 \   ,      \   
\frac{ c_1 }{ C,  \delta  }      \,  \mapsto   \,      \alpha  
 \   ,      \   
   \ldots 
  \   ,      \   
\frac{ c_n  }{ C,  \delta  }        \,  \mapsto   \,   \alpha  
\Big]     
\             .
\]

Assume $ T_0  = \gamma $. 
By (ii), $ \big\langle    \gamma    \,   ,   \,    \frac{ c_m }{ C,  \alpha }     \big\rangle       \sqsubseteq T  $. 
By (A), $  \big\langle    \gamma    \,   ,   \,    \frac{ c_m }{ C,  \alpha }     \big\rangle     
\not\sqsubseteq   \frac{ s   }{    \alpha }    $. 
Hence 
\[
T = \big\langle T_0        \,  ,  \,        \frac{  s   }{     \alpha }         \big\rangle 
= 
   \big\langle    \gamma    \,   ,   \,    \frac{ c_m }{ C,  \alpha }     \big\rangle  
\in   \mathbb{P}  (  C,   \alpha ,  \gamma  )  
\              .
\]

Assume now $ T_0  \neq    \gamma $. 
Since $ T_0 \sqsubseteq T $, it follows from (i) that 
\[
\delta      \not\sqsubseteq    T_0 
\              .
\tag{iv}
\]
Since
$  \big\langle    \gamma    \,   ,   \,    \frac{ c_m }{ C,  \alpha }     \big\rangle    \sqsubseteq T $, 
  $  \   T  \neq  \big\langle    \gamma    \,   ,   \,    \frac{ c_m }{ C,  \alpha }     \big\rangle     $
and 
 $  \big\langle    \gamma    \,   ,   \,    \frac{ c_m }{ C,  \alpha }     \big\rangle     
\not\sqsubseteq   \frac{ s  }{    \alpha }     $, 
we have 
\[
 \big\langle    \gamma    \,   ,   \,    \frac{ c_m }{ C,  \alpha }     \big\rangle       \sqsubseteq T_0 
\              .
\tag{v}
\]

Finally,    we have 
\[
\begin{array}{r c l}
T_0 
&  =   &  
  F^{ \alpha }_{ \delta }   (T)     \Big[      
   \big\langle    \gamma    \,   ,   \,       \frac{ c_m  }{ C,  \delta  }       \big\rangle    
    \,  \mapsto  \,         \gamma   
 \   ,      \   
\frac{ c_1 }{ C,  \delta  }      \,  \mapsto   \,      \alpha  
 \   ,      \   
   \ldots 
  \   ,      \   
\frac{ c_n  }{ C,  \delta  }        \,  \mapsto   \,   \alpha  
\Big]     
\\
\\
&=& 
 F^{ \alpha }_{ \delta }    \big(  \big\langle T_0 \, , \,    \frac{ s   }{     \alpha }     \big\rangle    \big)     \Big[      
   \big\langle    \gamma    \,   ,   \,       \frac{ c_m  }{ C,  \delta  }       \big\rangle    
    \,  \mapsto  \,         \gamma   
 \   ,      \   
\frac{ c_1 }{ C,  \delta  }      \,  \mapsto   \,      \alpha  
 \   ,      \   
   \ldots 
  \   ,      \   
\frac{ c_n  }{ C,  \delta  }        \,  \mapsto   \,   \alpha  
\Big]     
\\
\\
&=& 
  \big\langle    F^{ \alpha }_{ \delta }  ( T_0  )  \, , \,    \frac{ s  }{     \delta  }     \big\rangle      
     \Big[      
   \big\langle    \gamma    \,   ,   \,       \frac{ c_m  }{ C,  \delta  }       \big\rangle    
    \,  \mapsto  \,         \gamma   
 \   ,      \   
\frac{ c_1 }{ C,  \delta  }      \,  \mapsto   \,      \alpha  
 \   ,      \   
   \ldots 
  \   ,      \   
\frac{ c_n  }{ C,  \delta  }        \,  \mapsto   \,   \alpha  
\Big]     
\\
\\
&=& 
 \Big\langle    F^{ \alpha }_{ \delta }   ( T_0 ) 
  \Big[      
   \big\langle    \gamma    \,   ,   \,       \frac{ c_m  }{ C,  \delta  }       \big\rangle    
    \,  \mapsto  \,         \gamma   
 \   ,      \   
\frac{ c_1 }{ C,  \delta  }      \,  \mapsto   \,      \alpha  
 \   ,      \   
   \ldots 
  \   ,      \   
\frac{ c_n  }{ C,  \delta  }        \,  \mapsto   \,   \alpha  
\Big]     
  \;  , \;      S_0 
  \;    \Big\rangle     
\end{array}
\tag{vi} 
\]
where 
\[
\begin{array}{r c l  l}
S_0 
&  =   &  
  \frac{ s   }{    \delta  }    \Big[      
   \big\langle    \gamma    \,   ,   \,       \frac{ c_m  }{ C,  \delta  }       \big\rangle    
    \,  \mapsto  \,         \gamma   
 \   ,      \   
\frac{ c_1 }{ C,  \delta  }      \,  \mapsto   \,      \alpha  
 \   ,      \   
   \ldots 
  \   ,      \   
\frac{ c_n  }{ C,  \delta  }        \,  \mapsto   \,   \alpha  
\Big]     
\\
\\
&  =   &  
  \frac{ s   }{    \delta  }    \Big[      
\frac{ c_1 }{ C,  \delta  }      \,  \mapsto   \,      \alpha  
 \   ,      \   
   \ldots 
  \   ,      \   
\frac{ c_n  }{ C,  \delta  }        \,  \mapsto   \,   \alpha  
\Big]     
&   (  \mbox{by  }  (B)  \;     )  
\\
\\
&  =   &  
  \frac{ s^{ \prime }  s^{ \prime \prime  }    }{    \delta  }    \Big[      
\frac{ c_k    }{ C,  \delta  }      \,  \mapsto   \,      \alpha  
\Big]      
\\
\\
&=& 
\frac{ s^{ \prime }   }{    \alpha   }     
\in 
\tau_{ \alpha }     (   \lbrace 0, 1 , \mu_1,  \ldots , \mu_n \rbrace^*  )   
\end{array}
\]
where  we have used that     
\[
 s = s^{ \prime }  s^{ \prime \prime  }   
 \        \  
 \mbox{  and  } 
 \      \
 \frac{ s^{ \prime \prime  } }{ \delta   } =   \frac{ c_k  }{  C, \delta   }    
 \tag{*}
 \]
 for some $ k \in \lbrace 1, \ldots , n \rbrace  $ 
since 
$   \delta   \not\sqsubseteq  T     $      by (1)    and 
\[
\frac{  t  }{  \delta  }
\Big[      
\frac{ c_1 }{ C,  \delta  }      \,  \mapsto   \,      \alpha  
 \   ,      \   
   \ldots 
  \   ,      \   
\frac{ c_n  }{ C,  \delta  }        \,  \mapsto   \,   \alpha  
\Big]     
= \frac{  t  }{  \delta  }   
\]
if $t $ is not of the form  (*).
By (iv)-(vi),     $ T_0 $ satisfies  (1)-(3). 
Hence, by the induction hypothesis, $ T_0  \in \mathbb{P} ( C , \gamma ) $. 
It then follows from (iii),   (vi)  and (*)   that $ T   \in \mathbb{P} ( C , \gamma ) $.

Thus, by induction, $ T \in   \mathbb{P}  (  C,    \alpha  ,   \gamma  )     $    if $ T  $ satisfies (1)-(3).
\end{proof}

\subsection{Fourth Basic Lemma}
\label{FourthBasicLemma}

In  this section, we develop the tools we need to encode (III).
Let $ C =  \langle  c_1,  c_2,   \ldots ,   c_n  \rangle $. 
Each element  $ T \in  \mathbb{P} ( C,   \alpha ,  \gamma ) $ represents 
a sequence of the form $ w_1, w_2, \ldots , w_m $ where 
$  w_k   = c_{i_1} c_{i_2 }  \ldots c_{ i_k }   $ and  $ i_j \in \lbrace 1,   \ldots , n \rbrace $
for all $ j \in \lbrace 1, \ldots ,  m \rbrace $. 
We need the sequence $ i_{ 1 } , \,   i_{ 2 } ,   \,    \ldots  ,  \,  i_{ m } $ to verify that (III) holds. 
We need an existential $ \mathcal{L}_{ \mathsf{BT}  }  $-formula that extracts this information from $T$. 
To achieve this, we need to encode sequences that are more complex than those we encountered in Section  \ref{ThirdBasicLemma}.

The class $ \mathbb{P} ( C, \alpha , \gamma ) $
consists of finite binary trees  that encode sequences  of the form 
 $ w_1, w_2,  \ldots , w_ k $ where 
$ w_i   \in   \tau_{ \alpha }  (   \lbrace 0, 1, \mu_1,  \ldots , \mu_n \rbrace^* )  $  for all $ i \in \lbrace 1,  \ldots  , k \rbrace $. 
We  need   to consider  the class of those binary trees that encode sequences of the form 
$ W_1, W_2,  \ldots , W_k $ where 
$ W_i \in   \mathbb{P} ( C, \alpha , \gamma )   $ for   all $ i \in \lbrace 1,  \ldots  , k \rbrace $. 
To illustrate how this helps us identify  the sequence  $ i_{ 1 } , \,   i_{ 2 } ,   \,    \ldots  ,  \,  i_{ m } $, 
let $T$ be the binary tree in Figure   \ref{ThirdBasicLemmaSecondFigure}. 
We need to find an existential  $ \mathcal{L}_{ \mathsf{BT}  }  $-formula  $ \Psi (T, X  ) $ 
that is true in  $ \mathcal{T} (   \mathcal{L}_{ \mathsf{BT}  }   )  $ if and only if  $X$ represents the string  
$ \mu_2   \mu_3    \mu_1 $. 
Instead of working with $T$, 
we work with the  binary   tree   $ \Gamma_n^{ \alpha }   (T)   $   in Figure   \ref{FourthBasicLemmaFirstFigure}. 
It   contains   the information   $ \mu_2,   \,  \mu_3  ,  \,  \mu_1 $   and has the advantage of having a simpler structure.
We give a formal definition of the operator 
 $ \Gamma_n^{ \alpha }   :  \mathbf{H}  \to \mathbf{H}   $  that takes $T$ and gives us   $ \Gamma_n^{ \alpha }   (T)  $. 
It is really the restriction of $ \Gamma _n^{ \alpha }  $ to $ \mathbb{P} ( C , \alpha , \gamma ) $ we are interested in.
It will follow from the definition that $ \Gamma_n^{ \alpha }    $ is existentially definable.

\begin{figure}

\begin{center}

\begin{forest}
[
[
[
[$  \gamma  $ ]
   [         [    [ $   \mu_2   $  ]    [    $ 0  $    ]  ]     [    $   0 $  ]    ] 
]
        [ 
        [   [         [    [ $  \mu_3    $  ]    [    $  0  $    ]  ]     [    $   0  $  ]    ] 
        [  $   0  $  ] 
        ]
        [  $  0  $   ]  
        ]
]
      [
       [
        [ 
        [   [         [    [ $  \mu_1    $  ]    [    $  0   $    ]  ]     [    $   0 $  ]    ] 
        [  $   0   $  ] 
        ]
        [  $  0  $   ]  
        ]
        [ $  0  $  ] 
        ]
        [  $  0  $   ]
        ]
]
\end{forest}

\end{center}
\caption{
Visualization of $  \Gamma_n^{ \alpha }   (T)  $  when $T$ is the binary tree in Figure   \ref{ThirdBasicLemmaSecondFigure}.
}
\label{FourthBasicLemmaFirstFigure}
\end{figure}
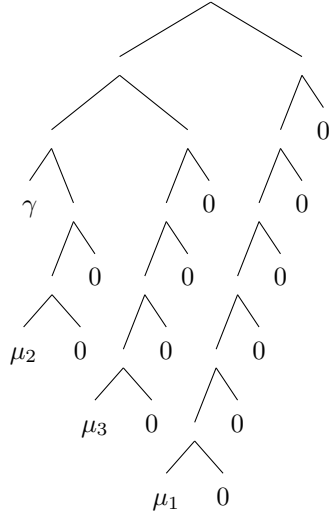

\begin{definition}   \label{FourthBasicLemmaFirstDefinition}

Let $ \alpha ,   0, 1,  \mu_1,  \ldots, \mu_n $ be as in Definition \ref{ThirdBasicLemmaFirstDefinitionPartII}. 
Let $ \mu_{ n+1 } ,  \ldots ,  \mu_{ 2n }  $ be distinct fresh letters. 
Definition  \ref{ThirdBasicLemmaFirstDefinition} associates each letter in the alphabet 
 $ \lbrace     0, 1,  \mu_1,   \mu_2  \ldots, \mu_{ 2n }   \rbrace  $ 
 with a finite binary tree. 
Let $ \Gamma_n^{ \alpha }    :    \mathbf{H}  \to   \mathbf{H} $ be the function defined by $ \Gamma_n^{ \alpha }   (T) = T_2  $ 
where 
\[
\begin{array}{r c l }
T_0  
&=&
  T  \Big[  \;   
 \frac{ \mu_1 }{ \alpha }        \,  \mapsto  \,      \frac{ \mu_1 }{   \mu_{ n+1  }   }     
\  ,    \   \ldots  \  ,    \     
 \frac{ \mu_n  }{ \alpha }        \,  \mapsto  \,     \frac{ \mu_n }{   \mu_{ n+n  }   }       
\;      \Big]  
\\
\\
T_1  
&=&
 T_0      \Big[  \;   1    \,  \mapsto  \,    0     
\   ,     \   
\mu_1   \,  \mapsto   \,   0     
\   ,     \   
\mu_2   \,  \mapsto   \,   0  
\  ,    \   \ldots  \  ,    \     
\mu_n   \,  \mapsto   \,   0   
\;      \Big]  
\\
\\
T_2   
&=&
  T_1     \Big[      \;      
\mu_{ n+1 }    \,  \mapsto  \,    \mu_{ 1  }       
\   ,     \   
\mu_{ n+2 }    \,  \mapsto   \,     \mu_2 
\  ,    \   \ldots  \  ,    \     
\mu_{ n+n }    \,  \mapsto   \,      \mu_n  
\;      \Big]  
\                 .
\end{array}
\]

\end{definition}

We continue  to use the binary tree $T$ in Figure   \ref{FourthBasicLemmaFirstFigure} for illustration. 
Recall  that we are interested in specifying an existential    $   \mathcal{L}_{ \mathsf{BT}  }    $-formula 
$ \Psi (  T, X ) $  that is true if and only if  $ X $ encodes the string   $ \mu_2   \mu_3    \mu_1 $. 
As we have just seen,  $ \Gamma^{ \alpha }_{ n }  (T) $ contains also the information 
$ \mu_2,   \,  \mu_3  ,  \,  \mu_1 $. 
So,  we let  $ \Psi (T, X ) $ be a formula of the form $ \exists W  \;   \Phi ( T , X, W )  $
where $W$ is a finite binary tree that encodes a sequence $ W_1,   W_2  , \ldots , W_k $ 
where $ W_1 = \Gamma^{ \alpha }_{ n }  (  T )  $ and $ W_k = X   $.
Before we give a formal definition of the class  $  \mathbb{P}_2  (C,  \alpha ,  \gamma  )  $
of all $W$ with this property, 
we  use  the binary tree $T$ in Figure   \ref{FourthBasicLemmaFirstFigure}    to illustrate the form of   $W$. 
Let $ W_1, \ldots , W_7 $ be the binary trees in Figure    \ref{FourthBasicLemmaSecondFigure}.
Then,  $W$  can for example be   the binary tree 
\[
 \Big\langle  \;    \alpha    \,  ,   \,  
 W_7  \,  ,   \,   W_6 \,  ,   \,   W_5 \,  ,   \,   W_4 \,  ,   \,   W_3  \,  ,   \,   W_2  \,  ,   \,     W_1 
 \;       \Big\rangle  
\]
or  the binary tree 
\[
 \Big\langle  \;    \alpha    \,  ,   \,  
 W_7  \,  ,   \,   W_7  \,  ,   \,   W_6 \,  ,   \,   W_5 \,  ,   \,   W_4 \,  ,   \,   W_3  \,  ,   \,   W_2  \,  ,   \,     W_1 
 \;       \Big\rangle  
\            .
\]
It is not a problem  that there are many  choices  for $ W $. 
What is important is that $ \Gamma^{ \alpha }_{ n }  (T) $ is the unique right subtree of $ W$, 
and   $ W_7 $ encodes the information we need in a simple format 
 and  is the unique subtree  $X$ of $W$ which is such that     $ \langle \alpha \, , \,  X \rangle \sqsubseteq W $.

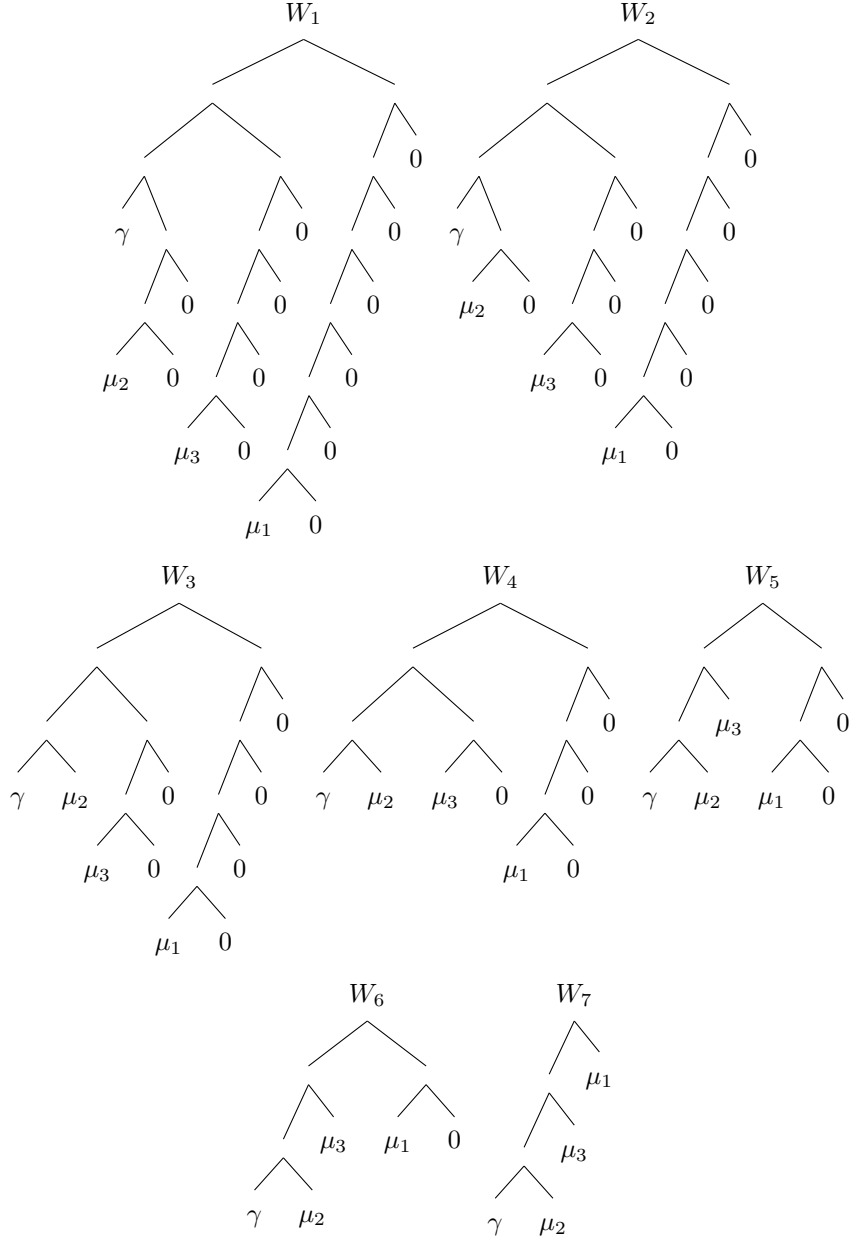
\begin{figure}

\begin{center}

\begin{forest}
[  $   W_1   $ 
[
[
[$  \gamma  $ ]
   [         [    [ $   \mu_2   $  ]    [    $ 0  $    ]  ]     [    $   0 $  ]    ] 
]
        [ 
        [   [         [    [ $  \mu_3    $  ]    [    $  0  $    ]  ]     [    $   0  $  ]    ] 
        [  $   0  $  ] 
        ]
        [  $  0  $   ]  
        ]
]
      [
       [
        [ 
        [   [         [    [ $  \mu_1    $  ]    [    $  0   $    ]  ]     [    $   0 $  ]    ] 
        [  $   0   $  ] 
        ]
        [  $  0  $   ]  
        ]
        [ $  0  $  ] 
        ]
        [  $  0  $   ]
        ]
]
\end{forest}
\begin{forest}
[  $ W_2 $ 
[
[
[$  \gamma  $ ]
   [         [ $   \mu_2   $  ]        [    $   0 $  ]    ] 
]
        [ 
        [   [          [ $  \mu_3    $  ]        [    $   0  $  ]    ] 
        [  $   0  $  ] 
        ]
        [  $  0  $   ]  
        ]
]
      [
       [
        [ 
        [   [             [ $  \mu_1    $  ]         [    $   0 $  ]    ] 
        [  $   0   $  ] 
        ]
        [  $  0  $   ]  
        ]
        [ $  0  $  ] 
        ]
        [  $  0  $   ]
        ]
]
\end{forest}
\begin{forest}
[   $W_3 $ 
[
[
[$  \gamma  $ ]
            [ $   \mu_2   $  ]       
]
        [ 
        [             [ $  \mu_3    $  ]     
        [  $   0  $  ] 
        ]
        [  $  0  $   ]  
        ]
]
      [
       [
        [ 
        [               [ $  \mu_1    $  ]        
        [  $   0   $  ] 
        ]
        [  $  0  $   ]  
        ]
        [ $  0  $  ] 
        ]
        [  $  0  $   ]
        ]
]
\end{forest}
\begin{forest}
[   $W_4 $ 
[
[
[$  \gamma  $ ]
            [ $   \mu_2   $  ]       
]
        [ 
         [ $  \mu_3    $  ]     
        [  $  0  $   ]  
        ]
]
      [
       [
        [ 
          [ $  \mu_1    $  ]        
        [  $  0  $   ]  
        ]
        [ $  0  $  ] 
        ]
        [  $  0  $   ]
        ]
]
\end{forest}
\begin{forest}
[   $W_5 $ 
[
[
[$  \gamma  $ ]
            [ $   \mu_2   $  ]       
]
         [ $  \mu_3    $  ]     
]
      [
       [
          [ $  \mu_1    $  ]        
        [ $  0  $  ] 
        ]
        [  $  0  $   ]
        ]
]
\end{forest}
\begin{forest}
[   $W_6 $ 
[
[
[$  \gamma  $ ]
            [ $   \mu_2   $  ]       
]
         [ $  \mu_3    $  ]     
]
      [
          [ $  \mu_1    $  ]        
        [  $  0  $   ]
        ]
]
\end{forest}
\begin{forest}
[   $W_7 $ 
[
[
[$  \gamma  $ ]
            [ $   \mu_2   $  ]       
]
         [ $  \mu_3    $  ]     
]
          [ $  \mu_1    $  ]        
]
\end{forest}
\end{center}
\caption{
Let  $T$ be the binary tree in Figure   \protect\ref{ThirdBasicLemmaSecondFigure}. 
Then, $ W_1 =   \Gamma^{ \alpha }_{ n }  (T)   $. 
Binary trees of the form 
$ \      W   = \Big\langle  \;    \alpha   \,  ,   \,  
 W_7  \,  ,   \,    \ldots     \,  ,   \,  W_7   \,  ,   \,      W_6  \,  ,   \,   W_5 \,  ,   \,   W_4 \,  ,   \,   W_3 \,  ,   \,   W_2  \,  ,   \,   W_1 
 \;       \Big\rangle    $   
  are  elements  of   $  \mathbb{P}_2  (C,  \alpha ,  \gamma  )   $.
}
\label{FourthBasicLemmaSecondFigure}
\end{figure}

\begin{definition}   \label{FourthBasicLemmaSecondDefinition}

Let $ C =  \langle  c_1,  c_2,   \ldots ,   c_n  \rangle $ be a sequence of nonempty binary strings. 
Let $ \alpha ,  \gamma \in \mathbf{H} $ be incomparable  with  respect to the subtree relation. 
Assume $ \alpha   $  satisfies  the condition in Definition \ref{ThirdBasicLemmaFirstDefinition}.
Assume $ \gamma $ is not a subtree of $ \mu_i $ for all $ i \in \lbrace 1, \ldots , n  \rbrace $. 
Let $ W \in  \mathbb{P}_2  (C,  \alpha ,  \gamma  ) $  if and only if 
there exists a sequence $ W_1, W_2,  \ldots , W_k  \in \mathbf{H} $ such that 
\begin{itemize}

\item  $  W =  \big\langle \;  \alpha   \,  ,   \,    W_k , W_{ k-1}     \,  ,   \,   \ldots    \,  ,   \,     W_1   \;   \big\rangle   $

\item     $ W_k  \in  \tau_{ \gamma } (   \lbrace \mu_1, \ldots , \mu_n \rbrace^{ + }   )   $

\item     for all  $ i \in \lbrace 1,  2, \ldots , k-1 \rbrace $
\[
W_{ i +1   }  = 
W_i    \Big[        \      
    \frac{ 0 }{ \mu_1 }    \,  \mapsto  \,         \mu_1     
 \   ,      \   
     \frac{ 0 }{ \mu_2 }    \,  \mapsto  \,         \mu_2     
 \   ,      \   
\ldots  
  \   ,      \   
    \frac{ 0 }{ \mu_n  }    \,  \mapsto  \,         \mu_n     
    \      \Big]      
\]

\item  there exists $ T \in    \mathbb{P}   (C  ,  \alpha ,  \gamma )   $ such that 
$ W_1 = \Gamma^{ \alpha }_{ n }    (T )  $.

\end{itemize}

\end{definition}

We  prove that $ \mathbb{P}_2   (C  ,  \alpha ,  \gamma )   $   is existentially definable.

\begin{lemma}      \label{ForthBasicLemmaMainResult}

Let $ C =  \langle  c_1,  c_2,   \ldots ,   c_n  \rangle $ be a sequence of nonempty binary strings. 
Let $ \alpha ,  \gamma \in \mathbf{H} $ be incomparable with   respect to the subtree relation. 
Assume $ \alpha $  satisfies the condition in Definition \ref{ThirdBasicLemmaFirstDefinition}.
Assume $ \gamma $ is not a subtree of $ \mu_i $ for all $ i \in \lbrace 1, \ldots , n  \rbrace $. 
Let $ W \in \mathbf{H} $. 
Then,  $  W  \in  \mathbb{P}_2   (C  ,  \alpha ,  \gamma )  $  if and only if 
\begin{itemize}

\item[\textup{(1) } ]   there exists  $ X  \in \tau_{ \gamma } (   \lbrace \mu_1, \ldots , \mu_n \rbrace^{ + }   )   $  such that 
 $ \langle    \alpha      \,   ,   \,   X   \rangle    \sqsubseteq   W   $

\item[\textup{(2) } ]   there exists   $ T \in  \mathbb{P}   (C  ,  \alpha ,  \gamma )  $ such that 
$   W =  \langle  V  \,  ,   \,   \Gamma^{ \alpha }_{ n }   (T)  \rangle  $  
where 
\[
V 
   =   
 W      \Big[        \     
  \langle    \alpha      \,   ,   \,   X   \rangle 
  \,  \mapsto  \,    \alpha  
  \   ,      \   
    \frac{ 0 }{ \mu_1 }    \,  \mapsto  \,         \mu_1     
 \   ,      \   
     \frac{ 0 }{ \mu_2 }    \,  \mapsto  \,         \mu_2     
 \   ,      \   
\ldots  
  \   ,      \   
    \frac{ 0 }{ \mu_n  }    \,  \mapsto  \,         \mu_n     
    \      \Big]      
\                   .
\]

\end{itemize}

\end{lemma}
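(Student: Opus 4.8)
The plan is to prove Lemma~\ref{ForthBasicLemmaMainResult} by the same method used for Lemma~\ref{ThirdBasicLemmaMainResult} (and, before it, Lemma~\ref{MultiplicationLemma}): the left-to-right implication is read off the definition, and the converse is an induction on the size of $W$, with a short list of incomparability facts doing the bookkeeping. For the left-to-right direction, I would take the witnessing sequence $W_1,\ldots,W_k$ from Definition~\ref{FourthBasicLemmaSecondDefinition}, so that $W=\langle\alpha,W_k,\ldots,W_1\rangle$; then $\langle\alpha,W_k\rangle\sqsubseteq W$ and $W_k\in\tau_\gamma(\{\mu_1,\ldots,\mu_n\}^+)$ give (1) with $X=W_k$, and for (2) I would take the $T$ with $W_1=\Gamma_n^\alpha(T)$ and check by a direct computation with the substitution operator that the substitution $\langle\alpha,X\rangle\mapsto\alpha$ collapses the bottom node (using that no $W_i$ contains $\alpha$) while one round of the substitutions $\frac{0}{\mu_j}\mapsto\mu_j$ shifts each remaining block from $W_i$ to $W_{i+1}$ (using the transition identity of Definition~\ref{FourthBasicLemmaSecondDefinition}), so that $V=\langle\alpha,W_k,\ldots,W_2\rangle$ and $W=\langle V,\Gamma_n^\alpha(T)\rangle$.

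For the converse I would first record the auxiliary non-subtree facts, in the spirit of properties (A)--(C) in the proofs of Lemma~\ref{MultiplicationLemma} and Lemma~\ref{ThirdBasicLemmaMainResult}: first, no element of the set $\{\Gamma_n^\alpha(T):T\in\mathbb{P}(C,\alpha,\gamma)\}$, nor any of its images under rounds of the substitutions $\frac{0}{\mu_j}\mapsto\mu_j$, contains $\alpha$, since $\Gamma_n^\alpha$ is designed to erase every occurrence of $\alpha$ (Definition~\ref{FourthBasicLemmaFirstDefinition}) and none of the cleanup substitutions reintroduces it; hence, in $W=\langle\alpha,W_k,\ldots,W_1\rangle$, a subtree of the form $\langle\alpha,Z\rangle$ occurs only as the deepest left node $\langle\alpha,W_k\rangle$, which forces $X=W_k$; second, since $\alpha$, $\gamma$ and the letter codes $\mathsf{g}_i$ are pairwise incomparable with respect to $\sqsubseteq$, no subtree of the form $\langle\mu_j,0\rangle$ can straddle two consecutive blocks $W_i,W_{i+1}$ nor meet the nested-pair scaffolding of $W$, so a round of the substitutions $\frac{0}{\mu_j}\mapsto\mu_j$ acts blockwise.

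Given these, I would induct on the size of $W$. Assume $W$ satisfies (1)--(2); extract $X$ and $T$ and let $V=W[\langle\alpha,X\rangle\mapsto\alpha,\frac{0}{\mu_1}\mapsto\mu_1,\ldots,\frac{0}{\mu_n}\mapsto\mu_n]$ as in (2). If $V=\alpha$, then $W=\langle\alpha,\Gamma_n^\alpha(T)\rangle$, and by the first incomparability fact the occurrence of $\langle\alpha,X\rangle$ granted by (1) must be $W$ itself, so $X=\Gamma_n^\alpha(T)\in\tau_\gamma(\{\mu_1,\ldots,\mu_n\}^+)$, which exhibits $W$ as a length-one element of $\mathbb{P}_2(C,\alpha,\gamma)$. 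If $V\neq\alpha$, then the substitution $\langle\alpha,X\rangle\mapsto\alpha$ fires, so $V$ is strictly smaller than $W$; I would then show, using the two incomparability facts, that $V$ again has the form $\langle V',\Gamma_n^\alpha(T')\rangle$ with $V'$ the corresponding substitution and that $\langle\alpha,X\rangle\sqsubseteq V$, i.e.\ that $V$ satisfies (1)--(2). By the induction hypothesis $V\in\mathbb{P}_2(C,\alpha,\gamma)$, and then $W=\langle V,\Gamma_n^\alpha(T)\rangle$ together with the validity of the cleanup transition from the top block of $V$ to $\Gamma_n^\alpha(T)$ and the fact that $T\in\mathbb{P}(C,\alpha,\gamma)$ places $W$ in $\mathbb{P}_2(C,\alpha,\gamma)$, closing the induction.

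The hard part will be the step ``$V$ again satisfies (1)--(2)'': one must track precisely how the single compound substitution defining $V$ interacts with the nested-pair scaffolding of $W$ — that it collapses exactly the bottom node and advances every surviving block by one cleanup step — and, most delicately, one must verify that the family of trees that may legitimately occur as the right child of an element of $\mathbb{P}_2(C,\alpha,\gamma)$ is stable under one round of the substitutions $\frac{0}{\mu_j}\mapsto\mu_j$, so that the induction hypothesis genuinely applies to $V$. This is exactly where the incomparability hypotheses on $\alpha,\gamma$, the hypothesis that $\gamma$ is not a subtree of any $\mu_i$, and the precise shape of $\Gamma_n^\alpha$ from Definition~\ref{FourthBasicLemmaFirstDefinition} are used; everything else is routine manipulation of the substitution operator of the kind already carried out in Sections~\ref{SecondBasicLemma} and~\ref{ThirdBasicLemma}.
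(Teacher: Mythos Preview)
Your left-to-right sketch is fine and matches the paper. The converse, however, has a genuine gap at exactly the point you flag as ``the hard part'': the claim that $V$ again satisfies (2) is false. Clause (2) demands that the right child of $V$ be $\Gamma_n^\alpha(T')$ for some $T'\in\mathbb{P}(C,\alpha,\gamma)$. But if $W=\langle\alpha,W_k,\ldots,W_1\rangle$ with $W_1=\Gamma_n^\alpha(T)$, then $V=\langle\alpha,W_k,\ldots,W_2\rangle$ and its right child is $W_2=W_1[\frac{0}{\mu_1}\mapsto\mu_1,\ldots,\frac{0}{\mu_n}\mapsto\mu_n]$. This is \emph{not} of the form $\Gamma_n^\alpha(T')$: applying one round of cleanup shortens every column of $\Gamma_n^\alpha(T)$ by one, whereas for any $T'\in\mathbb{P}(C,\alpha,\gamma)$ the first column of $\Gamma_n^\alpha(T')$ has height $|c_j|+1$ for some fixed $j$, determined by the instance $C$. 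In the paper's running example ($|c_i|=2$ for all $i$), $W_1$ has first column of height $3$ but $W_2$ has first column of height $2$, which no $\Gamma_n^\alpha(T')$ can produce. So the family you hope is stable under cleanup is not, and your induction hypothesis does not apply to $V$.

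The paper sidesteps this by \emph{not} trying to re-establish (1)--(2) for $V$. Instead it decouples the argument into two inductions. First, a backward induction along the left spine of $W$ shows that every $U$ with $\langle\alpha,X\rangle\sqsubseteq U\sqsubseteq W$ and $U=\langle U_0,U_1\rangle$ satisfies the purely structural identities $U_0=U[\langle\alpha,X\rangle\mapsto\alpha,\frac{0}{\mu_1}\mapsto\mu_1,\ldots]$ and $\alpha\not\sqsubseteq U_1$; the base case $U=W$ is exactly clause (2), and the step uses only that $\alpha$ is incomparable with $0$ and the $\mu_i$. These identities never mention $\Gamma_n^\alpha$ or $\mathbb{P}(C,\alpha,\gamma)$, so they propagate freely down the spine. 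Second, a forward induction on size uses these identities to reconstruct the sequence $W_k,\ldots,W_1$ with the required transition rule; the clause about $\Gamma_n^\alpha(T)$ is used only once, at the very top, to identify $W_1$. If you want to repair your argument, you should replace ``$V$ satisfies (1)--(2)'' by a weaker invariant of this kind that does not tie the right child to the image of $\Gamma_n^\alpha$.
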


Before we prove the lemma, we illustrate the left-right  implication using the binary tree $T$  in
 Figure   \ref{ThirdBasicLemmaSecondFigure}.
For example,  let 
\[
W = 
 \Big\langle  \;    \alpha    \,  ,   \,  
 W_7  \,  ,   \,   W_7  \,  ,   \,   W_6 \,  ,   \,   W_5 \,  ,   \,   W_4 \,  ,   \,   W_3  \,  ,   \,   W_2  \,  ,   \,     W_1 
 \;       \Big\rangle  
 \in   
 \mathbb{P}_2   (C  ,  \alpha ,  \gamma )  
\]
 where the $ W_i$`s are given in Figure  \ref{FourthBasicLemmaSecondFigure}.
Since $ \alpha $ has only one occurrence in $ W $ 
\[
W   \big[        \        \langle    \alpha      \,   ,   \,   W_7    \rangle     \,  \mapsto  \,    \alpha    \     \big] 
= 
 \Big\langle  \;    \alpha    \,  ,   \,  
  W_7  \,  ,   \,   W_6 \,  ,   \,   W_5 \,  ,   \,   W_4 \,  ,   \,   W_3  \,  ,   \,   W_2  \,  ,   \,     W_1 
 \;       \Big\rangle  
 \         .
\] 
It then follows from the third clause of    Definition   \ref{FourthBasicLemmaSecondDefinition}  
or by looking at  Figure  \ref{FourthBasicLemmaSecondFigure} that 
\begin{multline*}
 W      \Big[        \     
  \langle    \alpha      \,   ,   \,   W_7    \rangle 
  \,  \mapsto  \,    \alpha  
  \   ,      \   
    \frac{ 0 }{ \mu_1 }    \,  \mapsto  \,         \mu_1     
 \   ,      \   
     \frac{ 0 }{ \mu_2 }    \,  \mapsto  \,         \mu_2     
 \   ,      \   
\ldots  
  \   ,      \   
    \frac{ 0 }{ \mu_n  }    \,  \mapsto  \,         \mu_n     
    \      \Big]      
    = 
    \\
    \\
     \Big\langle  \;    \alpha    \,  ,   \,  
    W_7  \,  ,   \,   W_7  \,  ,   \,   W_6 \,  ,   \,   W_5 \,  ,   \,   W_4 \,  ,   \,   W_3  \,  ,   \,   W_2    \;       \Big\rangle  
 \         .
\end{multline*}

\begin{proof}[Proof of Lemma \ref{ForthBasicLemmaMainResult}]

The  left-right implication is a straightforward consequence of Definition  \ref{FourthBasicLemmaSecondDefinition}. 
We  focus on proving the right-left  implication.

Assume   $W$ satisfies (1)-(2). 
We need to show that   $  W  \in  \mathbb{P}_2   (C  ,  \alpha ,  \gamma )  $. 
By Definition  \ref{FourthBasicLemmaSecondDefinition}, 
we need to show that there exist    $ W_1,  \ldots , W_k  \in \mathbf{H} $ such that 
\begin{itemize}

\item[(A)]    $  
W =  \big\langle \;  \alpha   \,  ,   \,    W_k , W_{ k-1}     \,  ,   \,   \ldots    \,  ,   \,  W_2  \,  ,   \,      W_1   \;   \big\rangle   
$

\item[(B)]      $ W_k  \in  \tau_{ \gamma } (   \lbrace \mu_1, \ldots , \mu_n \rbrace^{ + }   )   $

\item[(C)]     for all  $ i \in \lbrace 1,  2, \ldots , k-1 \rbrace $
\[
W_{ i +1   }  = 
W_i    \Big[        \      
    \frac{ 0 }{ \mu_1 }    \,  \mapsto  \,         \mu_1     
 \   ,      \   
     \frac{ 0 }{ \mu_2 }    \,  \mapsto  \,         \mu_2     
 \   ,      \   
\ldots  
  \   ,      \   
    \frac{ 0 }{ \mu_n  }    \,  \mapsto  \,         \mu_n     
    \      \Big]      
\]

\item[(D)]    there exists $ T \in    \mathbb{P}   (C  ,  \alpha ,  \gamma )   $ such that 
$ W_1 = \Gamma^{ \alpha }_{ n }   (T )     \;   $.

\end{itemize}

Let $ X$ and $T$ be binary trees that satisfy clauses (1)-(2). 
First, we    prove by   (backward)  induction   that if 
$  \langle \alpha  \,  ,  \,  X  \rangle   \sqsubseteq     U   \sqsubseteq W  $ and $ U = \langle U_0  \, ,  \, U_1  \rangle  $, 
 then 
\[
U_0  
   =    
 U      \Big[        \     
  \langle    \alpha      \,   ,   \,   X   \rangle 
  \,  \mapsto  \,    \alpha  
  \   ,      \   
    \frac{ 0 }{ \mu_1 }    \,  \mapsto  \,         \mu_1     
 \   ,      \   
\ldots  
  \   ,      \   
    \frac{ 0 }{ \mu_n  }    \,  \mapsto  \,         \mu_n     
    \      \Big]      
\tag{*}
\] 
and 
\[
\alpha  \not\sqsubseteq   U_1  
\            .
\tag{**}
\]
The base case $ U =  W $ is Clause (2). 
So,  assume $ U = \langle V   \, ,   \,  U_1  \rangle $,  $ \  V =  \langle V_0    \, ,   \,  V_1  \rangle   $, 
$  \      \langle \alpha  \,  ,  \,  X  \rangle   \sqsubseteq     V   \sqsubseteq U  \sqsubseteq W $
and $ U $ satisfies (*) and (**). 
We need to show that $V$ satisfies (*) and (**). 
Since $U$ satisfies (**),   $   \langle \alpha  \,  ,  \,  X  \rangle     \not\sqsubseteq    U_1 $. 
Since $ \alpha $ is  incomparable with  $ 0 $ and  $ \mu_i $ with respect to $ \sqsubseteq   $, 
the binary tree $ \frac{0 }{ \mu_i } $ cannot equal a binary tree  that has  $ \alpha $ as  subtree. 
Furthermore, if $ \alpha \sqsubseteq R $, then 
$ \alpha \sqsubseteq R [  \,     \frac{0 }{ \mu_i }   \,  \mapsto  \,  \mu_i  \,  ]   $. 
Hence, by (*)
\[
\begin{array}{r c l }
V 
&=&  
U      \Big[        \     
  \langle    \alpha      \,   ,   \,   X   \rangle 
  \,  \mapsto  \,    \alpha  
  \   ,      \   
    \frac{ 0 }{ \mu_1 }    \,  \mapsto  \,         \mu_1     
 \   ,      \   
\ldots  
  \   ,      \   
    \frac{ 0 }{ \mu_n  }    \,  \mapsto  \,         \mu_n     
    \      \Big] 
    \\
  &=&  
\langle V   \, ,   \,  U_1  \rangle  
      \Big[        \     
  \langle    \alpha      \,   ,   \,   X   \rangle 
  \,  \mapsto  \,    \alpha  
  \   ,      \   
    \frac{ 0 }{ \mu_1 }    \,  \mapsto  \,         \mu_1     
 \   ,      \   
\ldots  
  \   ,      \   
    \frac{ 0 }{ \mu_n  }    \,  \mapsto  \,         \mu_n     
    \      \Big]           
\\
&= & 
  \Big\langle  \;  
U^{ \prime }  
    \;  ,  \;  
U^{ \prime   \prime   } 
    \;    \Big\rangle 
\end{array}
\]
where  by (**) 
\[
\begin{array}{r c l }
U^{ \prime   \prime   }
  &= & 
 U_1        \Big[        \     
    \frac{ 0 }{ \mu_1 }    \,  \mapsto  \,         \mu_1     
 \   ,      \   
\ldots  
  \   ,      \   
    \frac{ 0 }{ \mu_n  }    \,  \mapsto  \,         \mu_n     
    \      \Big]      
    \not\sqsupseteq  \alpha 
    \\
    \\
U^{ \prime }  
&=&  
V      \Big[        \     
  \langle    \alpha      \,   ,   \,   X   \rangle 
  \,  \mapsto  \,    \alpha  
  \   ,      \   
    \frac{ 0 }{ \mu_1 }    \,  \mapsto  \,         \mu_1     
 \   ,      \   
   \ldots 
  \   ,      \   
    \frac{ 0 }{ \mu_n  }    \,  \mapsto  \,         \mu_n     
    \      \Big]     
    \        . 
\end{array}
\]
Thus, $V$ satisfies (*) and (**). 
Thus, by induction, 
if 
$  \langle \alpha  \,  ,  \,  X  \rangle   \sqsubseteq     U   \sqsubseteq W  $ and $ U = \langle U_0  \, ,  \, U_1  \rangle  $, 
 then $U$ satisfies (*)  and (**).
 \newline

Now, to prove  that (A)-(D) hold, 
it  suffices to prove  by induction on the size of finite binary trees  that if $ U  $ is a subtree of $W $ 
which is such that $   \langle    \alpha      \,   ,   \,   X   \rangle    \sqsubseteq  U $, 
then there exists a sequence $ U_1,  \ldots , U_m $ such that 
\begin{itemize}

\item[(i) ]    $  U =  \big\langle \;  \alpha , U_m , U_{ m-1}  ,  \ldots ,  U_1    \;   \big\rangle   $

\item[(ii) ]      $ U_m   =   X    $

\item[(iii) ]     for all  $ i \in \lbrace 1,  2, \ldots , m-1 \rbrace $
\[
U_{ i  + 1  }  = 
U_{  i   }     \Big[        \      
    \frac{ 0 }{ \mu_1 }    \,  \mapsto  \,         \mu_1     
 \   ,      \   
\ldots  
  \   ,      \   
    \frac{ 0 }{ \mu_n  }    \,  \mapsto  \,         \mu_n     
    \      \Big]      
    \          .
\]

\end{itemize}

So, assume $   \langle    \alpha      \,   ,   \,   X   \rangle    \sqsubseteq   U    \sqsubseteq   W         $.
If $ U = \langle    \alpha      \,   ,   \,   X   \rangle   $, then $U$ satisfies (i)-(iii) trivially.
Otherwise, by (**),   there exist $ V $ and $ U_1 $ such that 
$ U = \langle V \, , \,  U_1  \rangle $ and $    \langle    \alpha      \,   ,   \,   X   \rangle    \sqsubseteq  V $. 
By the induction hypothesis, 
there exists a sequence   $ V_1,  \ldots , V_m $ such that 
\begin{itemize}

\item[(iv) ]    $  V =  \big\langle \;  \alpha , V_m , V_{ m-1}  ,  \ldots ,  V_1    \;   \big\rangle   $

\item[(v) ]      $ V_m   =   X    $

\item[(vi) ]     for all  $ i \in \lbrace 1,  2, \ldots , m-1 \rbrace $
\[
V_{ i  + 1  }  = 
V_{  i   }     \Big[        \      
    \frac{ 0 }{ \mu_1 }    \,  \mapsto  \,         \mu_1     
 \   ,      \   
   \ldots 
  \   ,      \   
    \frac{ 0 }{ \mu_n  }    \,  \mapsto  \,         \mu_n     
    \      \Big]      
    \          .
\]

\end{itemize}
In particular 
\[
U = \langle V \,  ,   \,   U_1  \rangle 
= 
 \big\langle \;  \alpha  \,  ,  \,   V_m   \,  ,  \,    V_{ m-1}   \,  ,  \,    \ldots ,  V_1    \,  ,  \,   U_1   \;   \big\rangle
 \        .
\]
By (v)-(vi) and   (**),    there can only be one occurrence of $ \alpha $ in $U$. 
Hence 
\[
U \big[        \      \langle    \alpha      \,   ,   \,   X   \rangle     \,  \mapsto  \,    \alpha     \    \big]
= 
 \big\langle \;  \alpha     \,  ,  \,    V_{ m-1}   \,  ,  \,    \ldots ,  V_1    \,  ,  \,   U_1   \;   \big\rangle
\           .
\]
Then,  by (*)  and (vi)
\begin{multline*}
 \big\langle \;  \alpha  \,  ,  \,   V_m   \,  ,  \,    V_{ m-1}   \,  ,  \,    \ldots ,  V_1      \;   \big\rangle
 = 
 V 
 = 
 \\
U \Big[        \     
  \langle    \alpha      \,   ,   \,   X   \rangle 
  \,  \mapsto  \,    \alpha  
  \   ,      \   
    \frac{ 0 }{ \mu_1 }    \,  \mapsto  \,         \mu_1     
 \   ,      \   
\ldots  
  \   ,      \   
    \frac{ 0 }{ \mu_n  }    \,  \mapsto  \,         \mu_n     
    \      \Big]      
    = 
     \\
 \big\langle \;  \alpha     \,  ,  \,    V_{ m-1}   \,  ,  \,    \ldots ,  V_1    \,  ,  \,   U_1   \;   \big\rangle
 \Big[        \     
    \frac{ 0 }{ \mu_1 }    \,  \mapsto  \,         \mu_1     
 \   ,      \   
\ldots  
  \   ,      \   
    \frac{ 0 }{ \mu_n  }    \,  \mapsto  \,         \mu_n     
    \      \Big]      
    =
    \\
\big\langle \;  \alpha   \, ,  \,   V_{ m}    \, ,  \,     \ldots   \, ,  \,     V_2    \, ,  \,      U_1^{ \prime }     \;   \big\rangle 
\end{multline*}
where 
\[
U_1^{ \prime } 
= 
 U_1  \Big[        \     
    \frac{ 0 }{ \mu_1 }    \,  \mapsto  \,         \mu_1     
 \   ,      \   
\ldots  
  \   ,      \   
    \frac{ 0 }{ \mu_n  }    \,  \mapsto  \,         \mu_n     
    \      \Big]      
    \                     .
\]
Hence 
\[
U 
= 
 \big\langle \;  \alpha  \,  ,  \,   V_m   \,  ,  \,    V_{ m-1}   \,  ,  \,    \ldots ,  V_1    \,  ,  \,   U_1   \;   \big\rangle
\]
and 
\[
V_1  = U_1   \Big[        \     
    \frac{ 0 }{ \mu_1 }    \,  \mapsto  \,         \mu_1     
 \   ,      \   
\ldots  
  \   ,      \   
    \frac{ 0 }{ \mu_n  }    \,  \mapsto  \,         \mu_n     
    \      \Big]      
    \                     .
\]
Thus,  $U$ satisfies (i)-(iii).

Thus, by induction, 
 if $ U  $ is a subtree of $W $  which is such that $   \langle    \alpha      \,   ,   \,   X   \rangle    \sqsubseteq  U $, 
 then $U$ satisfies (i)-(iii).
\end{proof}

\subsection{Reduction of PCP}
\label{ManyToOneReductionOfPCPExistentialFragment}

We are ready to specify  a many-to-one reduction of PCP.

\begin{theorem}

Post`s Correspondence Problem is many-to-one reducible to   the fragment 
$ \mathsf{Th}^{ \exists }   (  \mathcal{T} ( \mathcal{L}_{ \mathsf{BT} }  )  ) $. 

\end{theorem}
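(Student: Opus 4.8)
The plan is to combine the three basic lemmas developed in Sections~\ref{ThirdBasicLemma}--\ref{FourthBasicLemma}: given an instance $\langle a_1, b_1\rangle, \ldots, \langle a_n, b_n\rangle$ of PCP, we build an existential $\mathcal{L}_{\mathsf{BT}}$-sentence $\phi$ that asserts the existence of two witnessing trees $T_a \in \mathbb{P}(\langle a_1, \ldots, a_n\rangle, \alpha, \gamma)$ and $T_b \in \mathbb{P}(\langle b_1, \ldots, b_n\rangle, \alpha', \gamma')$ together with enough auxiliary data to enforce the four conditions (I)--(IV) from the introduction to this section. First I would fix concrete incomparable parameters $\alpha, \gamma, \alpha', \gamma'$ (and the corresponding $\delta$'s) as small explicit terms $\perp^k$ and $\langle\perp^i, \perp^j\rangle$, verifying the incomparability and ``not a subtree'' side conditions required by Definitions~\ref{ThirdBasicLemmaFirstDefinition}, \ref{ThirdBasicLemmaSecondDefinition} and~\ref{FourthBasicLemmaSecondDefinition}. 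Then, using Lemma~\ref{ThirdBasicLemmaMainResult}, the membership $T_a \in \mathbb{P}(C_a, \alpha, \gamma)$ is expressed by an existential formula (a single existential quantifier over the auxiliary string $S$, plus subtree tests and substitution equalities, all quantifier-free in $\mathcal{L}_{\mathsf{BT}}$ since $x \sqsubseteq y$ unfolds to $y[x\mapsto\langle x,x\rangle] \ne y$); similarly for $T_b$.

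The next step handles (I)--(II): the intended reading of $T_a$ is a sequence $u_1, \ldots, u_k$ with $u_1 = a_{f_1}$ and $u_{j+1} = u_j a_{f_{j+1}}$, which is exactly what $\mathbb{P}(C_a, \alpha, \gamma)$ encodes, and the final word $u_k$ is recoverable as the unique component of $T_a$ of the appropriate form; I would add an equation forcing $u_k$ (as a $\tau_\alpha$-coded string over $\{0,1,\mu_1,\ldots,\mu_n\}$, then stripped to a plain $\{0,1\}^+$ string) to equal the corresponding final word extracted from $T_b$, giving (IV). For (III) — that the two index sequences coincide — I would invoke Lemma~\ref{ForthBasicLemmaMainResult}: apply $\Gamma_n^\alpha$ to $T_a$ and to $T_b$ to pull out the index strings $\mu_{f_1}\cdots\mu_{f_k}$ and $\mu_{g_1}\cdots\mu_{g_m}$ as elements of $\tau_\gamma(\{\mu_1,\ldots,\mu_n\}^+)$ via the existentially definable classes $\mathbb{P}_2(C_a,\alpha,\gamma)$ and $\mathbb{P}_2(C_b,\alpha',\gamma')$, and then assert equality of these two extracted strings (after relabelling $\gamma, \gamma'$ to a common marker by one more substitution). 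The sentence $\phi$ is the conjunction of: $T_a \in \mathbb{P}(C_a,\ldots)$, $T_b \in \mathbb{P}(C_b,\ldots)$, the $\mathbb{P}_2$-membership assertions, equality of the $\Gamma_n^\alpha$-extracted index strings, and equality of the extracted final words; all outermost quantifiers are existential, so $\phi$ is existential. Correctness in the forward direction is immediate by reading off a PCP solution from $T_a$ (and $T_b$); in the backward direction one uses that the $\mathbb{P}$- and $\mathbb{P}_2$-membership conditions pin down the trees to genuine encodings of sequences with the stated transition rules, so that $\phi$ true in $\mathcal{T}(\mathcal{L}_{\mathsf{BT}})$ yields indices $i_1,\ldots,i_m$ with $a_{i_1}\cdots a_{i_m} = b_{i_1}\cdots b_{i_m}$.

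The main obstacle I anticipate is \emph{synchronization} of the two witnessing trees: $T_a$ and $T_b$ are independent objects, and nothing a priori forces their index sequences to have the same length or the same entries. Lemma~\ref{ForthBasicLemmaMainResult} is precisely the tool for this — it lets the $\Gamma_n^\alpha$-images be compared as honest strings over $\{\mu_i\}$ — but the bookkeeping of \emph{fresh-letter management} is delicate: the alphabet must be large enough that the $\mu_i$ used for the $a$-side and the $b$-side are initially distinguishable (so $\Gamma_n^\alpha$ can operate on each without interference), yet after extraction one must collapse them onto a common set of markers by a fixed sequence of substitutions, and one has to check that this collapse does not accidentally identify distinct indices or create spurious subtree relations among the parameters. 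A secondary subtlety is the \emph{final-word comparison in (IV)}: the words $u_k$ and $v_m$ are stored in the $\mu$-annotated encoding of Definition~\ref{ThirdBasicLemmaFirstDefinitionPartII}, so before testing $u_k = v_m$ one must apply substitutions $\mu_i \mapsto (\text{last letter of } c_i)$ to recover the plain binary strings; I would need to verify that after this normalization the equality test is exactly string equality and introduces no new quantifiers. Once these parameter-and-alphabet details are nailed down, assembling $\phi$ and checking both directions of the reduction is routine, and the reduction is clearly computable from the PCP instance, so $\mathsf{Th}^\exists(\mathcal{T}(\mathcal{L}_{\mathsf{BT}}))$ is undecidable.
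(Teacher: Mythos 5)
Your proposal is correct and follows essentially the same route as the paper: encode the two build-up sequences via the classes $\mathbb{P}(\langle a_1,\ldots,a_n\rangle,\alpha,\gamma)$ and $\mathbb{P}(\langle b_1,\ldots,b_n\rangle,\alpha,\gamma)$ (Lemma \ref{ThirdBasicLemmaMainResult}), extract the index strings with $\Gamma^{\alpha}_{n}$ and the classes $\mathbb{P}_2$ (Lemma \ref{ForthBasicLemmaMainResult}), and compare the final words only after substituting each $\mu_i$ by the last letter of $a_i$ resp.\ $b_i$, exactly as in the paper's $\Theta^A_n$, $\Theta^B_n$. The one divergence is cosmetic: the paper reuses the same parameters $\alpha,\gamma$ and the same markers $\mu_1,\ldots,\mu_n$ on both sides (the two witnessing trees are separate objects, so no interference arises), hence the fresh-letter relabelling you anticipate is unnecessary and condition (III) becomes the plain equation $X_L=X_R$.
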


\begin{proof}

Let 
\[
\alpha = \langle \perp  \,  ,  \,  \perp^{ 2 }   \rangle 
\      \   
\mbox{  and   }  
\      \   
\gamma   = \langle \perp  \,  ,  \,  \perp^{ 3 }   \rangle 
\        .
\]
Then, $ \alpha $ and $ \gamma $ satisfy the conditions in 
Definition   \ref{ThirdBasicLemmaSecondDefinition} 
and 
Definition   \ref{FourthBasicLemmaSecondDefinition}.

Consider an instance $\langle a_{1}, b_{1} \rangle,\ldots , \langle a_{n}, b_{n}  \rangle$  of PCP. 
We need to construct an existential  $ \mathcal{L}_{ \mathsf{BT} }   $-sentence $ \phi $ 
that is true in  $  \mathcal{T} ( \mathcal{L}_{ \mathsf{BT} }  )  $   if and only if   
$\langle a_{1}, b_{1} \rangle,\ldots , \langle a_{n}, b_{n}  \rangle$
has a solution.
Recall that      $\langle a_{1}, b_{1} \rangle,\ldots , \langle a_{n}, b_{n}  \rangle$ 
has a solution if and only if there exist   two  sequences 
\[
u_1,  u_2,   \ldots , u_k     
\      \    
\mbox{  and   }     
 \         \  
 v_1, v_2,   \ldots , v_m 
\]
such that 
\begin{itemize}

\item[(I)]    there exists  $ f_1  \in \lbrace 1,  \ldots , n \rbrace $ such that    $u_1 = a_{ f_1 }  $   and 
for all $ j \in \lbrace 1,  \ldots , k-1  \rbrace $ there exist $ f_{ j+1 }    \in \lbrace 1,  \ldots , n \rbrace $ such that 
$ u_{ j+1 } = u_j a_{   f_{ j+1 }   } $

\item[(II)]    there exists  $ g_1  \in \lbrace 1,  \ldots , n \rbrace $ such that    $u_1 = b_{ g_1 }  $   and 
for all $ j \in \lbrace 1,  \ldots , m-1  \rbrace $ there exist $ g_{ j+1 }    \in \lbrace 1,  \ldots , n \rbrace $ such that 
$ u_{ j+1 } = u_j    b_{   g_{ j+1 }   } $

\item[(III)]  $ k =m $ and $ f_j = g_j $ for all   $j \in \lbrace 1, \ldots , k \rbrace $

\item[(IV)]  $ u_k = v_m $.

\end{itemize}

Let $ A = \langle   a_1, a_2,  \ldots , a_n    \rangle  $ and let  $ B = \langle   b_1, b_2,  \ldots , b_n   \rangle  $. 
Definition   \ref{ThirdBasicLemmaSecondDefinition}  tells us that 
the sequence $ u_1 ,  u_2 ,  \ldots  , u_k $ is encoded by a binary tree  $ L   \in   \mathbb{P} (  A,    \alpha ,  \gamma )  $
and the right subtree of $ L $, denoted $U$, encodes $ u_k $. 
Similarly, 
the   sequence $ v_1 ,  v_2 ,  \ldots  , v_m $ is encoded by   a binary tree   $ R    \in \mathbb{P} (  B,    \alpha ,  \gamma )  $
and the right subtree of $ R $, denoted $V$, encodes $ v_m $. 
Lemma  \ref{ThirdBasicLemmaMainResult}  tells us that 
$ \mathbb{P} (  A,    \alpha ,  \gamma )  $ and $   \mathbb{P} (  B,    \alpha ,  \gamma )  $ 
are existentially definable.

Definition    \ref{FourthBasicLemmaSecondDefinition}
gives us binary trees $ X_L  $ and  $ W_L  \in  \mathbb{P}_2  (  A,    \alpha ,  \gamma )  $ 
such that     $ \Gamma^{ \alpha }_{ n }     ( L) $ is the right subtree of $ W_L  $, 
   $ \;    \langle \alpha \,  ,  \,  X_L   \rangle  \sqsubseteq W_L $ and 
$ X_L  $ encodes the sequence $ f_1, f_2,  \ldots , f_k $. 
The existentially  definable operator $ \Gamma^{ \alpha }_{ n }    $ is defined in 
 Definition    \ref{FourthBasicLemmaFirstDefinition}.
Similarly, there exist  $ X_R  $ and  $ W_R  \in  \mathbb{P}_2  (  B,    \alpha ,  \gamma )  $ 
such that    $ \Gamma^{ \alpha }_{ n }   ( R) $ is the right subtree of $ W_R  $, 
    $ \;    \langle \alpha \,  ,  \,  X_R   \rangle  \sqsubseteq W_R $ and 
$ X_R  $ encodes the sequence $ g_1, g_2,  \ldots , g_m $. 
Lemma   \ref{ForthBasicLemmaMainResult}    tells us that 
$ \mathbb{P}_2  (  A,    \alpha ,  \gamma )  $ and $   \mathbb{P}_2  (  B,    \alpha ,  \gamma )  $ 
are existentially definable.

Now, encoding (III) corresponds to requiring that $ X_L = X_R   $ holds. 
To encode  (IV), we cannot simply require that  $ U = V $ holds since 
$ U $ is the representation of $ u_k $ when viewed as element of $ \lbrace 0, 1, \mu_1, \ldots , \mu_n \rbrace^+  $ 
and  $ V $ is the representation of $ v_m $ when viewed as element of $ \lbrace 0, 1, \mu_1, \ldots , \mu_n \rbrace^+ $. 
So, let $ \Theta^A_n  (U)  $ be the binary tree we obtain by replacing $ \mu_i $ with the last letter of $ a_i $
and let $ \Theta^B_n   (V) $   be the binary tree we obtain by replacing $ \mu_j $ with the last letter of $ b_j $.
Then,  encoding  (IV)  corresponds to requiring  that $ \Theta^A_n    (U) =  \Theta^B_n   (V) $ holds.

It is now obvious how to  specify an existential   $ \mathcal{L}_{ \mathsf{BT} }   $-sentence $ \phi $ 
that is true in  $  \mathcal{T} ( \mathcal{L}_{ \mathsf{BT} }  )  $   if and only if   
$\langle a_{1}, b_{1} \rangle,\ldots , \langle a_{n}, b_{n}  \rangle$
has a solution.
We let 
\begin{multline*} 
\phi \equiv   \  
\exists L \in \mathbb{P} (  A,    \alpha ,  \gamma )     \;     \exists  U, U^{ \prime }  \;  
\exists R  \in \mathbb{P} (  B,  \alpha ,   \gamma )      \;     \exists  V, V^{ \prime }  \;  
\\
\exists W_L \in \mathbb{P}_2  (  A,    \alpha ,  \gamma )     \;     
\exists  X_L   ,  S_L    \;  
\exists W_R  \in \mathbb{P}_2  (  B,    \alpha ,  \gamma )     \;     
\exists  X_R   ,   S_R   \;  
\Big[    \   
\\ 
L  =   \langle U^{ \prime }  \,  ,   \,     U   \rangle   
\    \wedge    \   
R  =   \langle V^{ \prime }  \,  ,   \,     V   \rangle   
\    \wedge    \   
\\  
 \langle  \alpha  \,  ,  \,  X_L  \rangle  \sqsubseteq   W_ L
\    \wedge    \   
W_L =  \langle  S_L   \,  ,   \,     \Gamma^{ \alpha }_{ n }  (L )     \rangle 
\    \wedge    \ 
\\
 \langle  \alpha  \,  ,  \,  X_R  \rangle  \sqsubseteq   W_ R
\    \wedge    \   
W_R =  \langle  S_R   \,  ,   \,     \Gamma^{ \alpha }_{ n }  (R )     \rangle 
\    \wedge    \ 
\\
\Theta^A_n  (U)  =   \Theta^B_n  (V) 
\    \wedge    \   
X_L =  X_R 
\          \Big]
\end{multline*}  
where  
\begin{itemize}

\item[-] $   \Theta^A_n   (U)  =  U  \big[  \;  \mu_1 \,  \mapsto  \,  d_1    \;  ,     \;   \ldots    \;   ,  \;  
 \mu_n    \,  \mapsto  \,   d_n   \big]   $
 and $ d_i $ is the last letter of $ a_i $

\item[-] $\Theta^B_n   (V)  =  V  \big[  \;  \mu_1 \,  \mapsto  \,  e_1    \;  ,     \;   \ldots    \;   ,  \;  
 \mu_n    \,  \mapsto  \,  e_n    \big]   $
 and $ e_j $ is the last letter of $ b_j $.     \qedhere 

\end{itemize}

\end{proof}

\section{Existential Definability    in   $  \mathcal{T}  ( \mathcal{L}_{ \mathsf{BT}   }  )  $  }

Our proofs  of undecidability  of the existential theory   of $ \mathcal{T} ( \mathcal{L}_{ \mathsf{BT} }  )  $ 
 do  not provide a characterization of the  existentially definable subsets of  $ \mathbf{H} $. 
Given a computably enumerable set $ A \subseteq   \mathbf{H}  $, 
we do not know if there exists an existential formula $ \theta_{ A } ( x  )  $ with only $x$ free such that 
for all $ t \in \mathbf{H}  $
\[
 t \in A   
   \      \Leftrightarrow    \   
  \mathcal{T} ( \mathcal{L}_{ \mathsf{BT} }  )    \models   \theta_A  ( t ) 
  \          .
  \]
For example,      Theorem \ref{ExistentialInterpretationOfFirstOrderArithmetic}  
  tells us  that $A$ is  one-to-one reducible to  an existentially definable  subset of   $   \mathbf{H}  $.
Indeed, let 
 $ g:    \mathbb{N}  \to   \mathbf{H} $ be the map that sends each  natural number to the corresponding element of 
$   \mathbb{N}^{ 0 }_{ s  }    \cup \lbrace 0 \rbrace $
and  choose  a one-to-one computable  function   $ f : \mathbf{H}  \to  \mathbb{N} $. 
Since  every computably enumerable set of natural numbers is existentially definable in  
$ ( \mathbb{N} , 0 , 1 , + , \times ) $ 
(see for example  Davis \cite{Davis1973}), 
there exists  an existential formula  $ \phi_{ A } (x) $ that defines $ f(A) $ in  $ ( \mathbb{N} , 0 , 1 , + , \times ) $. 
The proof  of   Theorem \ref{ExistentialInterpretationOfFirstOrderArithmetic} 
tells us how to compute  an existential   $ \mathcal{L}_{ \mathsf{BT} }  $-formula $ \psi_A (x) $ 
that defines  $ gf (A) $ in   $ \mathcal{T} ( \mathcal{L}_{ \mathsf{BT} }  )  $. 
Now, to show that $ A$  is existentially definable in  $ \mathcal{T} ( \mathcal{L}_{ \mathsf{BT} }  )  $, 
it suffices to find an $f$ such that $ gf $ is existentially definable in  $ \mathcal{T} ( \mathcal{L}_{ \mathsf{BT} }  )  $. 
It is not clear to us  whether such an $f$ exists. 
It  appears as if  the  coding techniques we have developed  are not sufficient to show that $ gf$ is existentially definable. 
Say we try to encode  $ gf(x) = y $ by describing the computation sequence  $ w_1,  w_2,  \ldots , w_k $ of $  gf(x) $. 
The problem is that describing  $ w_1,  w_2,  \ldots , w_k $ requires   that we refer to arbitrary subtrees of $ x $   since 
$ \langle \cdot , \cdot \rangle $ is not associative.

\begin{open problem} \label{OpenProblemDefinabilityTermAlgebraSubstitutionOperator}

Let $ A \subseteq \mathbf{H} $ be a computably enumerable set.
Is  $ A $  existentially definable in   $ \mathcal{T} ( \mathcal{L}_{ \mathsf{BT} }  )  $?

\end{open problem}

To put the preceding problem in context, 
we observe that there exist natural examples of  computable structures  with undecidable existential theory but where the corresponding problem has a negative solution, 
in contrast  to  $ ( \mathbb{N} , 0, 1, +, \times ) $. 
For example,  let us introduce the following relations on $ \lbrace 0, 1, 2  \rbrace^*  $: 
$ (u, v) \in  \mathcal{L} $ if and only if $ \vert u \vert \leq \vert v \vert $ and 
$ (u, v) \in  \mathcal{P} $ if and only if the number of $ 0$`s in $ u$ is the same as the number of $0$`s in $ v$. 
Then, it follows easily from the proof of Theorem 2 of  B\"uchi  and Senger     \cite{Senger1988}
 that  $ ( \mathbb{N} , 0, 1, +, \times ) $  is $ \exists $-interpretable in   
 $ (   \lbrace 0, 1, 2  \rbrace^*   , \varepsilon ,  0, 1 , 2  , ^{ \frown}  ,   \mathcal{L}  ,   \mathcal{P}   )   $.
We show that   $ \lbrace 0, 1 \rbrace^* $ is not existentially definable in 
 $ (   \lbrace 0, 1, 2  \rbrace^*   , \varepsilon ,  0, 1 , 2  , ^{ \frown}  ,   \mathcal{L}  ,   \mathcal{P}   )   $.
To prove this, 
we need to make  a  minor modification  to  the proof of   Theorem 16 of  Karhum\"aki et al.   \cite{Karhumaki2000}.
The necessary changes will be obvious to a  reader familiar with  \cite{Karhumaki2000}.
For completeness, we include the necessary details. 
 Theorem 16 of  \cite{Karhumaki2000}  is a  pumping lemma-like result   for    finitely generated free semigroups.

It is well known that   any  existential  formula in the language $ \lbrace \varepsilon , 0, 1, 2  , ^{ \frown}  \rbrace $
is computably   equivalent  in   
$ (   \lbrace 0, 1, 2  \rbrace^*   , \varepsilon ,  0, 1 , 2  , ^{ \frown}   ) $
  to a formula  of the form   $ \exists \vec{x}  \;  [   \   s= t  \   ]  $ 
(see Theorem 6 of  \cite{Karhumaki2000}   or 
Section 3 of   Kristiansen \& Murwanashyaka \cite{KristiansenMurwanashyakaAML}).
Since 
\[
 (u, v) \not\in    \mathcal{L}     \   \Leftrightarrow   \        (v0 , u)  \in    \mathcal{L}  
\      \       
\mbox{   and    }   
\            \   
(u, v )   \not\in    \mathcal{P}       \   \Leftrightarrow   \   
 \exists x \;   [          \       ( u0x , v ) \in    \mathcal{P}      \     \vee    \       ( u , v 0 x  ) \in    \mathcal{P}         \         ] 
\]
it  follows that  each existential  formula in the language 
$ \lbrace \varepsilon ,0, 1, 2  , ^{ \frown} ,  \mathcal{L}  ,   \mathcal{P}   \rbrace $ 
is computably   equivalent in  
$ (   \lbrace 0, 1, 2  \rbrace^*   , \varepsilon ,  0, 1 , 2  , ^{ \frown}  ,   \mathcal{L}  ,   \mathcal{P}   )   $
 to a formula  of the form 
\[
\exists \vec{x}  \;     \exists \vec{y}   \;     \exists \vec{z}   \;     \exists \vec{w}   \;     \exists \vec{v}       \; [       \  
 s=t    
  \    \wedge       \    
 \bigwedge_{i=1}^{n}  \mathcal{L} ( y_j  ,   z_j )   
  \    \wedge       \     
   \bigwedge_{j=1}^{m}  \mathcal{P} ( w_j  ,   v_j )   
  \      ]
\    . 
\] 
Let us call  this representation the  \emph{normal form}.

We need a notion of factorization. 
The factorization of $ \varepsilon $ is $ \varepsilon $. 
The   factorization   of  $ w \in  \lbrace 0, 1, 2  \rbrace^+  $  is  the  sequence 
$ w_1,  w_2 ,  \ldots, w_k   \in  \lbrace 0, 1, 2  \rbrace^+  $  that  satisfies 
\begin{itemize}

\item[-] $ w = w_1 w_2   \ldots w_ k $

\item[-]  for each $ i  \in \lbrace 1,  2,  \ldots , k \rbrace $, there exists $ a_i \in   \lbrace 0, 1, 2  \rbrace $ such that 
$ w_i \in  \lbrace a  \rbrace^+  $.

\item[-] for each $  i  \in \lbrace 1,  2,  \ldots , k-1 \rbrace $,  $   \     a_i \neq a_{ i+1 }  $.

\end{itemize}

We let  $ \mathsf{F}  (w) $ denote  the number of distinct factors in the factorization of $ w $. 
For example, 
the factorization of $ 0^{ 5 } 10 2^{ 6 }  0  2^{ 6 } $ is 
\[
 0^{ 5 } ,   \;   1 ,   \;  0 ,   \;    2^{ 6 }    ,     \;   0      ,     \;     2^{ 6 } 
 \      \      
 \mbox{    and      }  
 \        \    
  \mathsf{F}  (  0^{ 5 } 10 2^{ 6 }  0  2^{ 6 } ) = 4 
  \         .
  \]

The result we need is the following.

\begin{claim} \label{DefinabilityLengthProjection}

Let  $\phi (x) $ be an existential formula on normal form  that defines  the set  
$L \subseteq \lbrace 0, 1 , 2   \rbrace^*  $ 
in 
$ (   \lbrace 0, 1, 2  \rbrace^*   , \varepsilon ,  0, 1 , 2  , ^{ \frown}  ,   \mathcal{L}  ,   \mathcal{P}   )   $.
Let $ w \in L $ be such that $ \mathsf{F} ( w ) >  2 \vert \phi \vert + 2  $.
Then, there exists a term $ p (x) $  and a   factor  $ u $ of $w $   such that 
$ w = p(u) $ and 
\[
\lbrace   p(v)      :     \    
v \in    \lbrace 0, 1, 2  \rbrace^*            \     \wedge     \   
\vert u \vert = \vert v  \vert     \     \wedge     \       \mathcal{P} (u  ,  v)        \        \rbrace
\subseteq L  
\                .
\]

\end{claim}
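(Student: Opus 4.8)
The plan is to adapt the pumping-lemma argument of Theorem 16 of Karhumäki et al. \cite{Karhumaki2000} to the enriched language $\lbrace \varepsilon, 0, 1, 2, {}^{\frown}, \mathcal{L}, \mathcal{P} \rbrace$. We are given an existential formula $\phi(x)$ in normal form, so $\phi(x)$ is computably equivalent to
\[
\exists \vec{x}\; \exists \vec{y}\; \exists \vec{z}\; \exists \vec{w}\; \exists \vec{v}\; \big[\; s = t \;\wedge\; \textstyle\bigwedge_{i=1}^{n} \mathcal{L}(y_i, z_i) \;\wedge\; \bigwedge_{j=1}^{m}\mathcal{P}(w_j, v_j)\;\big],
\]
and we fix $w \in L$ with $\mathsf{F}(w) > 2\vert\phi\vert + 2$. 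Since $\phi(w)$ holds, choose witnesses $\vec{a}, \vec{b}, \vec{c}, \vec{d}, \vec{e}$ for $\vec{x}, \vec{y}, \vec{z}, \vec{w}, \vec{v}$ making the matrix true when $x$ is interpreted as $w$. First I would run the word-combinatorial part of the Karhumäki et al.\ argument on the equation $s = t$ alone: because the factorization of $w$ has more than $2\vert\phi\vert + 2$ distinct factors and the number of occurrences of variables in $s = t$ is bounded by $\vert\phi\vert$, a counting/pigeonhole step produces a factor $u$ of $w$ lying "strictly inside" the image of a single variable occurrence, so that one can write $w = p(u)$ for a term $p(x)$ and simultaneously substitute into the witnesses a one-variable pattern: replacing $u$ by a fresh variable $x$ throughout the assignment turns the chosen witnesses into terms $a_i(x), b_i(x), \ldots$, with the equation $s = t$ remaining valid identically in $x$ (this is exactly the content of Theorem 16 of \cite{Karhumaki2000}, whose statement I would invoke rather than reprove).

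The extra work compared to \cite{Karhumaki2000} is to check that the relational conjuncts $\mathcal{L}(y_i, z_i)$ and $\mathcal{P}(w_j, v_j)$ survive the substitution $x \mapsto v$ for any $v$ with $\vert v\vert = \vert u\vert$ and $\mathcal{P}(u, v)$. For the $\mathcal{L}$-conjuncts this is immediate: $\mathcal{L}$ depends only on length, and replacing an occurrence of $u$ by $v$ of the same length leaves the lengths of all the witness terms unchanged, so each inequality $\vert y_i\vert \le \vert z_i\vert$ is preserved. For the $\mathcal{P}$-conjuncts one uses that $\mathcal{P}$ counts $0$'s and that $\mathcal{P}(u, v)$ guarantees $u$ and $v$ have the same number of $0$'s; hence replacing occurrences of $u$ by $v$ inside $w_j$ and $v_j$ changes the $0$-counts of both sides by the same amount (namely, the number of occurrences of $u$ in that term times the common shift, which here is $0$), so $\mathcal{P}(w_j, v_j)$ is maintained. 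Therefore, for every $v \in \lbrace 0,1,2\rbrace^*$ with $\vert u\vert = \vert v\vert$ and $\mathcal{P}(u, v)$, the tuple obtained by evaluating the witness terms at $x := v$ satisfies the full matrix, so $\phi(p(v))$ holds, i.e.\ $p(v) \in L$. This gives exactly the desired inclusion.

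The main obstacle is making the counting step precise enough that the chosen factor $u$ is genuinely a \emph{factor} of $w$ (a maximal monochromatic block) and not merely an arbitrary subword, and that the surrounding term $p(x)$ together with the one-variable witness terms can be extracted uniformly; this is where the hypothesis $\mathsf{F}(w) > 2\vert\phi\vert + 2$ is used, and it is precisely the technical heart of Theorem 16 of \cite{Karhumaki2000}. Since we are told the required modifications are "obvious to a reader familiar with \cite{Karhumaki2000}," I would state the factorization/pumping step as a direct appeal to that theorem applied to the $s = t$ part, and devote the written proof to verifying the two relational invariances above, which is the genuinely new content. Finally, to obtain the announced corollary that $\lbrace 0,1\rbrace^*$ is not existentially definable, one observes that if $\phi$ defined $\lbrace 0,1\rbrace^*$ then taking any $w \in \lbrace 0,1\rbrace^*$ with enough factors and choosing $v$ to contain the letter $2$ while matching $u$ in length and $0$-count (possible since $2 \notin u$ need not hold, but one can always pick $u$ a block of $1$'s and $v$ a block of $2$'s of the same length) yields $p(v) \in L$ with an occurrence of $2$, a contradiction.
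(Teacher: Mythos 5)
Your proposal matches the paper's proof: both run the Karhum\"aki--Mignosi--Plandowski block-factorization/pumping argument on the equation $s=t$ to extract an inside factor of $w$ that can be replaced by a fresh variable consistently throughout the solution, and then observe that the $\mathcal{L}$-conjuncts are preserved by any length-preserving substitution while the $\mathcal{P}$-conjuncts are preserved by any $0$-count-preserving substitution, exactly as in the paper. The only difference is presentational: the paper re-runs the proof of Theorem 16 (the $\Left$/$\Right$ maps, proper versus unproper factors, and the equivalence classes) in order to actually produce the parametrized witnesses $h(Y)(x)$ and to guarantee the chosen block can be substituted coherently, whereas you cite the theorem's statement as a black box, which by itself does not hand you those one-variable witness terms.
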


Before we prove the claim, let us use it to show that  $   \lbrace 0, 1  \rbrace^*  $ is not existentially definable in   
$ (   \lbrace 0, 1, 2  \rbrace^*   , \varepsilon ,  0, 1 , 2  , ^{ \frown}  ,   \mathcal{L}  ,   \mathcal{P}   )   $.

\begin{theorem}

$   \lbrace 0, 1  \rbrace^*  $ is not existentially definable in   
$ (   \lbrace 0, 1, 2  \rbrace^*   , \varepsilon ,  0, 1 , 2  , ^{ \frown}  ,   \mathcal{L}  ,   \mathcal{P}   )   $.

\end{theorem}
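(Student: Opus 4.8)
The plan is to argue by contradiction using Claim \ref{DefinabilityLengthProjection}. Suppose $\phi(x)$ is an existential formula on normal form that defines $\lbrace 0,1 \rbrace^*$ in $(\lbrace 0,1,2 \rbrace^*, \varepsilon, 0,1,2, {}^{\frown}, \mathcal{L}, \mathcal{P})$. The idea is to feed the claim a word $w \in \lbrace 0,1 \rbrace^*$ whose factorization has enough distinct factors, namely $\mathsf{F}(w) > 2|\phi| + 2$, so that the claim produces a term $p(x)$ and a factor $u$ of $w$ with $w = p(u)$ and $\lbrace p(v) : v \in \lbrace 0,1,2\rbrace^*,\ |u| = |v|,\ \mathcal{P}(u,v) \rbrace \subseteq L = \lbrace 0,1\rbrace^*$. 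Since $w \in \lbrace 0,1\rbrace^*$ and factors are maximal blocks of a single letter, the factor $u$ is a block of $0$'s or a block of $1$'s. In either case there exists a word $v$ with $|v| = |u|$ and $\mathcal{P}(u,v)$ (same number of $0$'s as $u$) that contains the letter $2$: for instance if $u = 1^m$ take $v = 2^m$, and if $u = 0^m$ take $v = 0^{m-1}2$ (using $m \geq 1$, as factors are nonempty). Then $p(v)$ contains an occurrence of $2$, so $p(v) \notin \lbrace 0,1\rbrace^*$, contradicting $p(v) \in L$.

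First I would pin down the word $w$: choose $w = (01)^N$ for $N$ large enough that $\mathsf{F}(w) = 2N > 2|\phi| + 2$, which is possible since $|\phi|$ is a fixed number once $\phi$ is given. This $w$ lies in $\lbrace 0,1\rbrace^* = L$ and has the required number of distinct factors, so Claim \ref{DefinabilityLengthProjection} applies and yields $p(x)$ and a factor $u$ of $w$. Every factor of $(01)^N$ is either a single $0$ or a single $1$, so $u \in \lbrace 0, 1 \rbrace$; in particular $|u| = 1$. Then I would take $v = 2$, which satisfies $|v| = 1 = |u|$ and $\mathcal{P}(u,v)$ vacuously or by a one-line count (if $u = 0$ then both $u$ and $v$... wait, here one must be slightly careful: $\mathcal{P}(0, 2)$ is false since $0$ has one $0$ and $2$ has none). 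So I would instead split on the letter of $u$: if $u = 1$ take $v = 2$ (both have zero $0$'s, $\mathcal{P}(1,2)$ holds, $|v|=1$); if $u = 0$ take $v$ of length $1$ with one $0$, forcing $v = 0$, which does not help. To handle the $u = 0$ case I would instead pick $w$ so that its $0$-factors have length $\geq 2$, e.g. $w = (0\,0\,1)^N$, whose factors are $00$ and $1$; then a $0$-factor $u = 00$ admits $v = 02$ with $|v| = 2 = |u|$ and $\mathcal{P}(00,02)$ (both contain exactly one $0$), so $p(v)$ contains a $2$. A $1$-factor $u = 1$ is handled as before with $v = 2$. Either way $p(v) \notin \lbrace 0,1\rbrace^*$, the desired contradiction.

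The main obstacle I anticipate is bookkeeping around the side condition $\mathcal{P}(u,v)$ in the claim: one must choose the witness word $w$ so that whichever factor $u$ the claim hands back, there is a length-preserving, $0$-count-preserving replacement $v$ that introduces the forbidden letter $2$. Choosing $w = (001)^N$ (or any word whose factorization uses only factors that are a single $1$ or at least two consecutive $0$'s) makes this uniform, so the argument goes through regardless of which factor the claim selects. A secondary point to check is that $\mathsf{F}((001)^N) = 2N$ can be made larger than the fixed bound $2|\phi| + 2$, which is immediate. Once the contradiction is reached we conclude that no such $\phi$ exists, i.e. $\lbrace 0,1\rbrace^*$ is not existentially definable in $(\lbrace 0,1,2\rbrace^*, \varepsilon, 0,1,2, {}^{\frown}, \mathcal{L}, \mathcal{P})$, as claimed.
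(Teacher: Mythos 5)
Your overall strategy is the paper's: apply Claim \ref{DefinabilityLengthProjection} to a word of $\lbrace 0,1\rbrace^*$ with many distinct factors and then substitute a word containing the letter $2$ for the selected factor. But the step where you verify the hypothesis of the Claim fails, because $\mathsf{F}(w)$ counts the number of \emph{distinct} factors in the factorization, not the number of blocks. The factorization of $(01)^N$ is $0,1,0,1,\ldots,0,1$, whose distinct factors are just $0$ and $1$, so $\mathsf{F}((01)^N)=2$ for every $N$; likewise $\mathsf{F}((001)^N)=2$ (distinct factors $00$ and $1$). Hence neither of your candidate words ever satisfies $\mathsf{F}(w)>2\vert\phi\vert+2$, and the Claim cannot be invoked at all, no matter how large you take $N$. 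This is exactly why the paper uses $w=0\,1^{k+1}\,0\,1^{k}\cdots 0\,1^{2}\,0\,1$ with $k=2\vert\phi\vert+2$: the $1$-blocks have pairwise different lengths, so $\mathsf{F}(w)=k+2>2\vert\phi\vert+2$.

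Your secondary patch for the case of a $0$-factor is also incorrect as written: $\mathcal{P}(00,02)$ is false, since $00$ contains two $0$'s while $02$ contains one. In fact, if $u=0^{m}$ then the two side conditions $\vert v\vert=\vert u\vert$ and $\mathcal{P}(u,v)$ force $v=0^{m}=u$, so an all-zeros factor can never produce a word outside $\lbrace 0,1\rbrace^*$; the only substitutions that introduce a $2$ are those on $1$-blocks, namely $1^{i}\mapsto 2^{i}$, which satisfy both conditions. So the repair is not to fatten the $0$-blocks but to make the $1$-blocks numerous and pairwise distinct, as the paper does. (Your instinct to worry about the $\mathcal{P}$-constraint is sound --- the contradiction genuinely only comes from a $1$-block factor --- but the way to exploit it is the paper's choice of $w$, together with the observation that a $0$-factor yields only the trivial set $\lbrace w\rbrace$ and hence one must arrange, via the many distinct $1$-blocks, that a $1$-block is the factor that matters.)
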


\begin{proof}

Assume for the sake of a contradiction   $   \lbrace 0, 1  \rbrace^*  $  is definable  in  the structure 
$ (   \lbrace 0, 1, 2  \rbrace^*   , \varepsilon ,  0, 1 , 2  , ^{ \frown}  ,   \mathcal{L}  ,   \mathcal{P}   )   $
 by an existential formula  $ \phi $,  which we may assume is on  the normal form. 
Let $ k = 2  \vert \phi  \vert + 2  $. 
We consider the word 
\[
w = 0    1^{ k+1  }       0    1^{ k }     0     1^{k-1}      \ldots  0    1^{  2  }    0 1 
\]
which has the factorization 
\[
 0  , \     1^{ k+1  }    , \        0    , \      1^{ k }    , \      0       , \     1^{k-1}     , \        \ldots     , \    0   , \        1^{  2  }     , \    0     , \     1 
\        .
\]
Claim  \ref{DefinabilityLengthProjection} gives us a  term $ p(x) $ such that $ w = p ( a^{ i } ) $ for some
 $ i  \in  \lbrace 1,  2,  \ldots , k+1  \rbrace  $ and $ a \in \lbrace 0, 1 \rbrace $.
Claim  \ref{DefinabilityLengthProjection}
tells us also that  $ p ( 2^{ i } ) \in   \lbrace 0, 1  \rbrace^*  $.
 But  this contradicts  the fact that no word in  $ \lbrace 0, 1  \rbrace^*   $ has an occurrence of $2$. 
Thus,  $   \lbrace 0, 1  \rbrace^*  $ is not existentially definable in 
$ (   \lbrace 0, 1, 2  \rbrace^*   , \varepsilon ,  0, 1 , 2  , ^{ \frown}  ,   \mathcal{L}  ,   \mathcal{P}   )   $.
\end{proof}

It is also possible to show that  $ \lbrace 010 , 0110 \rbrace^* $ is not existentially definable in  the structure 
$ (  \lbrace 0, 1  \rbrace^*   , \varepsilon ,  0, 1    , ^{ \frown}  ,    \mathcal{L}  ,   \mathcal{P}  ) $, 
which also has undecidable existential theory. 
To prove this, it suffices to work with   the notion of $ \mathcal{F}_{ Q } $-factorization given  in Section 5 of    \cite{Karhumaki2000}  
instead of the naive  block factorization we worked with. 
We leave the  verification of this  to the interested reader.

\begin{proof}[Proof of Claim \ref{DefinabilityLengthProjection} ]

We know that  $\phi $ is of the form 
\[
\exists \vec{x}  \;     \exists \vec{y}   \;     \exists \vec{z}   \;     \exists \vec{w}   \;     \exists \vec{v}       \; [       \  
 s=t    
  \    \wedge       \    
 \bigwedge_{i=1}^{n}  \mathcal{L} ( y_j  ,   z_j )   
  \    \wedge       \     
   \bigwedge_{j=1}^{m}  \mathcal{P} ( w_j  ,   v_j )   
  \      ]
  \        . 
\] 
So, $L $  is defined by   an equation $s = t$ with constraints of the form 
$(y, z) \in \mathcal{L} $ and  $(w, v) \in \mathcal{P} $.
Let $h  $  be a solution to  $s=t$ that also satisfies the given constraints. 
We start as in the proof of Theorem 16 of   \cite{Karhumaki2000}, with minor changes. 
Let $f_1, \ldots , f_N $ be the  factorization of the word  $h(s)   $. 
For each variable $Y$,  the word $h(Y)$ has a factorization $y_1, \ldots , y_t$.
For $ i \in \lbrace 2, \ldots , t-1 \rbrace $, we call $ y_i $ an inside factor. 
We call $ y_1 $ and $y_t $ outside factors.
Assume $Y$ occurs in $ s$ or $t$. 
Then, $h(Y) $ is a substring of $ h(s) $.
Since we factor words into blocks of the same letters, 
there exist $ \ell_Y  \in \lbrace 1, \ldots , N \rbrace $ such that
\begin{itemize}

\item[(I)]   $ y_{1+  i } =  f_{ \ell_Y + i }  $ for all $ i \in \lbrace 1 , \ldots , t-2 \rbrace $

\item[(II)] $ y_1  $ is a suffix of $ f_{ \ell_Y } $   and    $ y_t  $ is a prefix of $ f_{ \ell_Y   + t -1 }  $.

\end{itemize}

Two  functions $\Left $ and $\Right $ are defined on $\lbrace  1, \ldots , N \rbrace$. 
We let  $\Left i = (j, Y)$ if   $Y$ occurs in $s$  and  $f_i$ is the inside factor $y_j$ of $h(Y) $. 
We let  $\Right i = (j, Y)$ if $f_i$ is the inside factor $y_j$ of $h(Y)$ and $Y$ occurs in $t$. 
We call $f_i$ a proper factor if both $ \Left i$ and $\Right i $ are defined. 
The remaining factors are called unproper. 
We make  $\Left $ and $\Right $ total function by  mapping  any  undefined point $i$  to $(1, f_i )  $.
Let $d $ denote the length of $ s= t $. 
Observe that a factor $ f_i$ is unproper because of one of the following reasons: 
(i) there exist a letter $ a \in \lbrace 0, 1, 2 \rbrace $ that occurs in $s$ or $t$ and the corresponding position in $ h(s) $  is in $f_i $
when $f_i$ is viewed as part of $h(s) $, 
(ii) $f_i$ overlaps as in (II) with an end factor of $ h(Y) $ for some variable $Y$. 
If $ z $ is a letter or a  variable that occurs in $ s = t $, 
then each occurrence of $z $ contributes at most    two unproper factors.  
Thus, there are at most $2d $ unproper factors.

The set $ \lbrace 1, \ldots , N \rbrace$ is partitioned by letting $i $ and $j$ be in the same equivalence class if and only if 
\[
\Left i = \Right j  
\    \  \mbox{  or  }  \  \  
\Left i = \Left j
\    \  \mbox{  or  }  \  \  
\Right  i = \Right j
\    .
\]

Let $X$ be the variable defining $L$. 
We have two  cases: 
\begin{enumerate}

\item[(1)] Each equivalence class containing a position of an inside factor of $h(X)$ also contains a position of an unproper factor.

\item[(2)] Some equivalence classes containing positions of  inside factors of $h(X)$ do not contain positions of  unproper factors.

\end{enumerate}

Assume (1) holds. 
Then, the number of   distinct  inside factors of $h(X)$ is at most $2 d $ since there are at most $2d $  unproper factors. 
Since $ h(X) $ has two end factors, the number of distinct factors of $ h(X) $ has the following upper bound 
\[
\mathsf{F} ( h (X)  )     \leq  2  d + 2   \leq    2  \vert \phi  \vert + 2  
\            .
\]
Thus, if $  \mathsf{F} ( h (X)  )  >    2  \vert \phi  \vert + 2   $, then (2) holds.

We now consider (2).
Let $ h (X) = w_1  w_2   \ldots w_k $ where $ w_1,  w_2,  \ldots , w_k $ is the factorization of $ h(X) $. 
Choose an inside factor $ w_i $ of    $h(X)$  that belongs to  an equivalence class containing only positions of proper factors. 
Let $p(x) $ be the term we obtain by replacing with the fresh variable $x $ each occurrence of $ w_i $ as a factor  of  $h(X) $. 
Let $ h(s) (x) = h(t) (x) $ be the equation we obtain by replacing with $x $  each occurrence of $ w_i $ as a factor of $ h(s) $. 
Similarly,  for each variable $Y$, let $ h(Y) (x)  $ be term we  obtain by replacing each occurrence of $ w_i $ as a factor  of  $ h(Y) $. 
Now, for any word $ v  \in  \lbrace 0, 1, 2 \rbrace^* $, 
we have $ h(s) (v ) = h(t) (v) $. 
So, each $ v  \in  \lbrace 0, 1, 2 \rbrace^* $ defines a new solution to the equation  $ s = t $. 
However, the solution may not satisfy  the constraints given by  $ \mathcal{L}   $ and $    \mathcal{P}  $. 
If we choose $ v $  with the same length as $ w_i $, then 
$ ( h(Y) (v) , h(Z)  (v)  ) \in \mathcal{L}    $ if $ ( h(Y), h(Z) ) \in \mathcal{L}   $.
Similarly, if we choose   $ v $ where the number of $0$`s in $v$ is the same as the number of $ 0$`s $w_i $,   then 
$ ( h(Y) (v) , h(Z)  (v)  ) \in \mathcal{P}    $ if $ ( h(Y), h(Z) ) \in \mathcal{P}   $.
\end{proof}

\section*{References}

\bibliographystyle{elsarticle-num}

\end{document}